\documentclass [12pt,a4paper]{amsart}

\usepackage{amssymb,amsmath,amsthm, amscd}

\usepackage{url}

\begin{document}

	\newtheorem{theorem}{Theorem}[section]
	\newtheorem{lemma}[theorem]{Lemma}
	\newtheorem{proposition}[theorem]{Proposition}
	
	\newtheorem{statement}[theorem]{Statement}
	\newtheorem{conjecture}[theorem]{Conjecture}
	\newtheorem{corollary}[theorem]{Corollary}
	
	\newtheorem*{maintheorem}{Theorem A}

	\newtheoremstyle{neosn}{0.5\topsep}{0.5\topsep}{\rm}{}{\bf}{.}{ }{\thmname{#1}\thmnumber{ #2}\thmnote{ {\mdseries#3}}}
	\theoremstyle{neosn}
	\newtheorem{remark}[theorem]{Remark}
	\newtheorem{definition}[theorem]{Definition}
	\newtheorem{example}[theorem]{Example}

	\numberwithin{equation}{section}

	\newcommand{\Aut}{\,\mathrm{Aut}\,}
	\newcommand{\Inn}{\,\mathrm{Inn}\,}
	\newcommand{\End}{\,\mathrm{End}\,}
	\newcommand{\Out}{\,\mathrm{Out}\,}
	\newcommand{\Hom}{\,\mathrm{Hom}\,}
	\newcommand{\ad}{\,\mathrm{ad}\,}
	\newcommand{\SLn}{$\rm sl(n)$}
	\newcommand{\calL}{{\mathcal L}}
	\renewenvironment{proof}{\noindent \textbf{Proof.}}{$\blacksquare$}
	\newcommand{\GL}{\,\mathrm{GL}\,}
	\newcommand{\SL}{\,\mathrm{SL}\,}
	\newcommand{\SU}{\,\mathrm{SU}\,}
	\newcommand{\PGL}{\,\mathrm{PGL}\,}
	\newcommand{\PSL}{\,\mathrm{PSL}\,}
	\newcommand{\PSO}{\,\mathrm{PSO}\,}
	\newcommand{\SO}{\,\mathrm{SO}\,}
	\newcommand{\UN}{\,\mathrm{QSym}\,}
	\newcommand{\SUN}{\,\mathrm{SQ}\,}
	\newcommand{\Sp}{\,\mathrm{Sp}\,}
	\newcommand{\Spin}{\,\mathrm{Spin}\,}
	\newcommand{\PSp}{\,\mathrm{PSp}\,}
	\newcommand{\UT}{\,\mathrm{UT}\,}
	\newcommand{\Ann}{\,\mathrm{Ann}\,}
	\newcommand{\Exp}{\,\mathrm{Exp}\,}
	\newcommand{\Rad}{\,\mathrm{Rad}\,}
	\newcommand{\charr}{\mathrm{char}\,}
	\newcommand{\tr}{\mathrm{tr}\,}
	\newcommand{\diag}{{\rm diag}}

	\font\cyr=wncyr10 scaled \magstep1%

\title{Automorphisms of twisted Chevalley groups of type
${}^2E_6$ over local rings}

\author{Elena I. Bunina}
\author{Deep H. Makadiya}
\address{Department of Mathematics,
Bar--Ilan University, 5290002 Ramat Gan, Israel}
\date{}

\subjclass[2020]{20G35}
\keywords{Twisted Chevalley groups, automorphisms,
local rings with involution, adjoint representation, invariant tensors}

\begin{abstract}
We classify abstract automorphisms of the elementary and full twisted
Chevalley groups of type ${}^2E_6$ over commutative local rings with
involution and $2$ invertible, for both isogeny types.
Every automorphism is standard, with no invertibility assumption on $3$.
In the adjoint case, it has a unique expression as a composition of
conjugation by an element of the full twisted group and a ring
automorphism commuting with the involution.
For the simply connected form, the full group equals its elementary
subgroup, and the factors are strictly inner, diagonal, and ring
automorphisms.
The main step replaces a linear change of basis by a group element.
Invariant alternating tensors determine the transported Chevalley
bracket. We correct this bracket over a faithfully flat local
extension and descend the resulting conjugating element by uniqueness.
Root-subgroup relations then recover the coefficient automorphism.
\end{abstract}

\maketitle

\section{Introduction}

This is the fourth paper in our series on automorphisms and
isomorphisms of twisted Chevalley groups over commutative rings.
The preceding papers treat the unitary types ${}^2A_\ell$,
$\ell\geq5$, the low-rank types ${}^2A_3$ and ${}^2A_4$,
and the orthogonal types ${}^2D_\ell$, $\ell\geq4$;
see \cite{BuninaMakadiya2A,BuninaMakadiya2A34,BuninaMakadiya2D}.
Here we treat the exceptional type ${}^2E_6$ over local rings.
All automorphisms are standard under the sole invertibility
assumption $2\in R^\times$.

The automorphisms are abstract: no preservation of root subgroups
or compatibility with the matrix realization is assumed.
Over fields, the standard descriptions are developed in
\cite{SteinbergLectures,Humphreys1969}.
Over local rings, the residual classification must be supplemented
by control of the congruence kernel and of root elements with arbitrary
parameters; see \cite{BuninaLocalADE,Bunina2012,Bunina2024}.
We use the construction and structural properties of twisted groups
from \cite{AbeTwisted,SuzukiLocalTwisted,SuzukiNormality,GMNormalSubgroups}.
The type-specific step is to recover the Chevalley Lie bracket
after an initial matrix normalization.

\medskip
\noindent\textbf{The groups and the main result.}
Let $R$ be a commutative local ring with identity, let
$J=\operatorname{Jac}(R)$, and put $k=R/J$.
Assume
\[
 2\in R^\times,\qquad
 \theta\in\operatorname{Aut}(R),\qquad
 \theta^2=\operatorname{id}_R.
\]
Write $\overline r=\theta(r)$ and
\[
 R_0=R^\theta=\{r\in R\mid\overline r=r\}.
\]
The involution may be trivial, and a nontrivial $\theta$ may act
trivially on $k$.
The ring need not be Noetherian, reduced, complete, or Henselian;
in particular, residue characteristic three is included.

Let $\tau$ be the nontrivial pinned graph automorphism of $E_6$ and
put $\sigma=\tau\circ\theta$, using the same symbols for the induced
actions on the split groups.
We write
\[
\begin{aligned}
 E&:=E'_{\mathrm{ad},\sigma}(E_6,R),&
 G&:=G_{\mathrm{ad},\sigma}(E_6,R),\\
 E_{\mathrm{sc}}&:=E'_{\mathrm{sc},\sigma}(E_6,R),&
 G_{\mathrm{sc}}&:=G_{\mathrm{sc},\sigma}(E_6,R).
\end{aligned}
\]
The full groups are the fixed-point groups under $\sigma$;
their elementary subgroups are generated by relative root elements.
The relative root system $\Phi_\rho$ has type $F_4$.
Long roots have parameters in $R_0$, and short roots have parameters
in $R$.
The coordinates are fixed in
Subsection~\ref{subsec:relative-root-system-and-blocks}.

The weight lattice modulo the root lattice of $E_6$ is cyclic of
order three, so these are the two standard isogeny types.
In the simply connected form,
\[
 G_{\mathrm{sc}}=E_{\mathrm{sc}},
\]
because the fixed torus is elementary and the local
torus--elementary factorization applies; see
\cite[Proposition~5.8]{GMNormalSubgroups} and
Subsection~\ref{subsec:simply-connected-form}.
For $\theta=\operatorname{id}_R$, the corresponding groups are the
split groups of type $F_4$.

Put
\[
 \operatorname{Aut}(R,\theta)
 :=\{f\in\operatorname{Aut}(R)\mid f\theta=\theta f\}.
\]
Each such $f$ induces a coefficientwise automorphism $f_*$.
For $g\in G$, write $i_g(x)=gxg^{-1}$; its restriction to $E$ is
defined because $E$ is normal in $G$.
Strictly inner and diagonal automorphisms are understood as in
\cite{BuninaMakadiya2A}.

\begin{maintheorem}
Let $(R,\theta)$ satisfy the preceding assumptions.
Every automorphism of $E$, and every automorphism of $G$, has a
unique expression
\[
 \varphi=i_g\circ f_*,
 \qquad
 g\in G,\quad f\in\operatorname{Aut}(R,\theta).
\]
Every automorphism of $G_{\mathrm{sc}}=E_{\mathrm{sc}}$ is a
composition of a strictly inner automorphism, a diagonal
automorphism, and a ring automorphism induced by an element of
$\operatorname{Aut}(R,\theta)$.
Conversely, each such composition is an automorphism of the
corresponding group.
\end{maintheorem}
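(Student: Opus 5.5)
The plan follows the standard Bunina scheme, adapted to the twisted setting, in the order suggested by the abstract. Fix an automorphism $\varphi$ of $E$; the case of $G$ reduces to this one, since $E$ is characteristic in $G$ (it is the commutator subgroup, by the normality results of \cite{SuzukiNormality,GMNormalSubgroups}), and $G$ is recovered from $E$ as a subgroup of the normalizer of $E$ in the adjoint representation.

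\emph{Step 1 (residual reduction).} Pass to $k=R/J$ through the congruence kernel. Over $k$ the twisted group of type ${}^2E_6$, or $F_4$ when $\theta$ is trivial on $k$, has only standard automorphisms by \cite{SteinbergLectures,Humphreys1969}. Lifting the conjugating element and using that the principal congruence subgroup is characteristic, we may modify $\varphi$ by some $i_{g_0}$. After this modification, the images of the relative root subgroups $X_\alpha(1)$, $\alpha\in\Phi_\rho$ (of type $F_4$), are congruent to themselves modulo $J$.

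\emph{Step 2 (linearization).} Look at the images of involutions in the torus, which exist because $2\in R^\times$. Their $\pm1$-eigenspace decompositions of the $78$-dimensional adjoint module $L$ lift from $k$. So we can find $C\in\GL_{78}(R)$ such that $\varphi'=i_{C^{-1}}\circ\varphi$ fixes a split set of torus involutions. Then $\varphi'$ maps each relative root subgroup into the corresponding block of Subsection~\ref{subsec:relative-root-system-and-blocks}. Commutator relations, together with control of root elements with arbitrary parameters as in \cite{BuninaLocalADE,Bunina2024}, show that $\varphi'(x_\alpha(t))=x_\alpha(f_\alpha(t))$. This holds up to a diagonal matrix, with short-root parameters in $R$ and long-root parameters in $R_0$. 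The Chevalley relations then force a single ring automorphism $f$ with $f\theta=\theta f$. After absorbing a diagonal matrix into $C$, we get $\varphi=i_C\circ f_*$ with $C$ normalizing $E$ inside $\GL(L)$.

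\emph{Step 3 (main obstacle: $C\in G$).} We need to show that a linear map $C$ normalizing $E$ lies in the group. My approach is to show that $C$ preserves the Lie bracket of $L$ up to a scalar. The bracket is spanned by invariant alternating tensors in $\Lambda^2L^*\otimes L$, and $E$-invariants there are rank one: this holds over $k$ by representation theory, and it lifts because $E$ contains enough root elements. Conjugation by $C$ transports the bracket to another $E$-invariant tensor, so $C^*[\,,]=\lambda[\,,]$ with $\lambda\in R^\times$.

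Residue characteristic $3$ is the delicate case, since the adjoint module is not irreducible there. In that case I would instead use the defining relations with $x_\alpha(1)$ directly.

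To absorb $\lambda$, pass to a faithfully flat local extension $R'\supset R$ in which $\lambda$ has a square (cube) root compatible with $\theta$, for example $R'=R[y]/(y^2-\lambda)$ localized. There $C'=\lambda^{-1}C$ is a Lie algebra automorphism commuting with $\sigma$. Hence $C'$ lies in $\Aut(L_{R'})^\sigma$, which is the adjoint group extended by $\tau$; the graph part is excluded modulo $J$.

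The element $C'$ is unique modulo the centralizer of $E$, which is scalar. Therefore its Galois conjugates agree, and faithfully flat descent puts it in $G$.

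\emph{Step 4.} Uniqueness follows because $i_g\circ f_*=\mathrm{id}$ forces $f=\mathrm{id}$ by comparing $x_\alpha(t)\mapsto x_\alpha(f(t))$, and then $g$ is central, hence trivial in the adjoint group. For the simply connected form, use $G_{\mathrm{sc}}=E_{\mathrm{sc}}$ (Subsection~\ref{subsec:simply-connected-form}). An automorphism descends to the adjoint quotient by the center, which is characteristic. It is standard there, and the element $g\in G$ factors as an element of $E_{\mathrm{sc}}$'s image times a torus element. This gives the strictly inner, diagonal and ring factors. The lift back is unique up to central automorphisms, which are trivial since $E_{\mathrm{sc}}$ is perfect. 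The converse holds because each listed map visibly preserves the generators and relations.
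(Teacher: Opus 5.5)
Your plan has two genuine gaps, in Steps 3 and 2.

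\emph{Step 3.} It rests on a false claim: the $E$-invariant alternating maps $\Lambda^2L\to L$ do not form a rank-one module in general. The theorem includes $\theta=\mathrm{id}$. Then $E$ is the split $F_4$ group, commutes with $Q$, and preserves $L=L^+\oplus L^-$ (ranks $52$ and $26$). Hence $B_{++},B_{+-},B_{--}$ are independent invariants, already over $k$. Extra invariants such as $\epsilon B_{++}$ also survive when $\theta$ is nontrivial but trivial modulo $J$, e.g.\ $R=k[\epsilon]/(\epsilon^2)$ with $\overline\epsilon=-\epsilon$.

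For a concrete failure, take $\theta=\mathrm{id}$ and $C=\pi_++c\pi_-$ with $c\in R^\times$. This $C$ centralizes $E$, so it is a legitimate output of your Step 2 for $\varphi=\mathrm{id}$. Yet $C^{-1}\{Cu,Cv\}=B_{++}(u,v)+B_{+-}(u,v)+c^2B_{--}(u,v)$, and no scalar multiple of $C$ preserves the bracket unless $c^2=1$.

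The paper handles this in two moves. Jacobi forces the $B_{+-}$-coefficient to equal the $B_{++}$-coefficient $r$. One then corrects by the non-scalar frame-centralizing matrix $r\pi_++z\pi_-$ with $z^2=rt$. That square root is the only reason for the faithfully flat extension; your scalar correction $\lambda^{-1}C$ needs no root at all, which signals the mismatch.

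For the same reason your descent step fails as stated. The linear centralizer is $S\pi_+\oplus S\pi_-$, not the scalars. Descent works because the frame centralizer inside $\mathcal G(S)$ is trivial, and this requires proving $Q\notin\mathcal G(S)$. Identifying bracket-preserving matrices near the identity with $\mathcal G$ over an arbitrary local ring is also a real step, including in residue characteristic $3$. The paper proves it via $\operatorname{Der}(L_S)$, whose toral part consists of the $\partial_i$; these are not spanned by the $\operatorname{ad}h_i$ when $3\notin S^\times$.

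\emph{Step 2.} This is where most of the work lies, and citing \cite{BuninaLocalADE,Bunina2024} does not supply it. Fixing torus involutions only gives the eigenspace blocks; it fixes neither the Weyl elements nor the constant root elements. The paper needs $2$-group averaging, transport between blocks, and an exact Jacobian certificate for one full short-root matrix.

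More seriously, with $C$ merely linear, the normalized images of $x_\alpha(t)$ preserve a transported bracket, not the standard one. You therefore only have linear commutation conditions, and these have extra solutions. For $\alpha=\alpha_4$ they are $aI+bQ+cN_\alpha+dQN_\alpha+eN_\alpha^2+uT_\alpha$ with $3u=0$. Here $T_\alpha e_{-\alpha}=-h_1+h_2-h_5+h_6$, which is central in $L_k$ when $\operatorname{char}k=3$.

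The paper forces $a=1$, $b=d=u=0$ and $e=c^2/2$ only by imposing bracket preservation. That is available only after the conjugating matrix has been placed in $G$. This is why the paper reverses your order. It normalizes only the constant frame, corrects $C_0$ into $G$ using frame invariance alone, and only then recovers arbitrary parameters by root recognition.

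Your remark that in residue characteristic $3$ you would ``use the defining relations with $x_\alpha(1)$ directly'' marks exactly the missing argument. The residual reduction, the passage from $E$ to $G$, and the simply connected step are essentially as in the paper.
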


There is no independent graph factor: on the $\sigma$-fixed groups,
\[
 \tau_*\theta_*(x)=x,\qquad \tau_*(x)=\theta_*(x),
\]
so the graph action is already included among the ring automorphisms.
No central factor is needed either: the adjoint groups are
centerless, and the simply connected full group is elementary and
perfect.
For an automorphism of $E$, the conjugating element is allowed to
lie in $G$, not necessarily in $E$.

\medskip
\noindent\textbf{Outline of the proof.}
We separate normalization of a finite family of constant elements
from recovery of the arbitrary root parameters.

\smallskip
\noindent\emph{1. Normalize the standard frame.}
For $\varphi\in\operatorname{Aut}E$, consider
\[
 \mathcal F(R)=(x_\lambda(1))_{\lambda\in\Phi_\rho},
\]
called the standard frame.
Normal structure allows reduction modulo $J$, and the field
automorphism theorem describes the residual action.
We lift and remove only its conjugating factor: the residual
coefficient automorphism already fixes the frame.
The resulting automorphism $\varphi_1$ therefore has standard
reductions on $\mathcal F(R)$.

On the $78$-dimensional Chevalley module $L_R$, one averaging over
a finite $2$-subgroup fixes the toral involutions and a chosen family
of Weyl elements simultaneously.
We then normalize the remaining Weyl elements and one complete
constant short-root matrix.
Weyl conjugation gives the other constant short-root elements, and
short--short commutators determine the constant long-root elements
from their squares.
The accumulated change of basis
$C_0\in\operatorname{GL}_{78}(R)$ is compatible with the twisting
and satisfies
\[
 C_0\equiv I_{78}\pmod{M_{78}(J)},\qquad
 C_0\varphi_1(x_\lambda(1))C_0^{-1}=x_\lambda(1)
 \quad(\lambda\in\Phi_\rho).
\]

\smallskip
\noindent\emph{2. Replace the linear normalization by a group element.}
At this point, $C_0$ is not known to belong to $G$.
Its transported Chevalley bracket is invariant under the standard
frame.
An integral calculation describes all frame-invariant alternating
tensors, and the Jacobi identity reduces the transported bracket
to two scalar coefficients.
We correct these coefficients over a faithfully flat local extension
without changing the normalized frame.
A local bracket-stabilizer result places the corrected matrix in
the adjoint group.
The triviality of the frame centralizer in the adjoint group over
arbitrary coefficient rings gives uniqueness and descent to $R$.

\smallskip
\noindent\emph{3. Recover the coefficients.}
The normalized map is now an automorphism of $E$ fixing the
standard frame. Its values preserve the Chevalley bracket.
Together with this bracket preservation, finite commutation
conditions recognize long-root elements and short-root elements
with skew parameters.
The $B_2$ relations and the fixed--skew decomposition of $R$ give
one additive coefficient map commuting with $\theta$.
A short $A_2$ relation proves multiplicativity; applying the same
argument to the inverse proves bijectivity.
Characteristicity and the trivial centralizer of $E$ give the full
adjoint result, while the central isogeny and perfectness give the
simply connected classification.

The remainder of Section~1 fixes the matrix conventions.
Sections~2--4 perform the linear normalization, and Section~5
corrects and descends the change of basis.
Section~6 recovers the coefficients and completes the classification.
It also gives coefficient recovery for frame-fixing homomorphisms
over arbitrary rings with involution, and relative normalization
for homomorphisms with a local target and prescribed standard
residual action.


\subsection{The relative root system and standard blocks}
\label{subsec:relative-root-system-and-blocks}

We fix the root coordinates, Chevalley signs, and block conventions
used in the normalization.

Number the $E_6$ diagram along the chain
$\alpha_1,\alpha_2,\alpha_3,\alpha_5,\alpha_6$, with
$\alpha_4$ attached to $\alpha_3$.
The graph involution $\tau$ interchanges $\alpha_1,\alpha_6$
and $\alpha_2,\alpha_5$ and fixes $\alpha_3,\alpha_4$.
The simple roots of the folded system $\Phi_\rho$ are
\[
 [\alpha_1]=\{\alpha_1,\alpha_6\},\qquad
 [\alpha_2]=\{\alpha_2,\alpha_5\},\qquad
 [\alpha_3]=\{\alpha_3\},\qquad
 [\alpha_4]=\{\alpha_4\}.
\]
We identify $[\alpha]$ with $(\alpha+\tau\alpha)/2$ in the invariant
real root space.
Singleton classes are long roots; two-element classes are short
roots, and their two absolute roots are orthogonal.

\smallskip
\noindent\emph{The Chevalley module and conventions.}
Let $\Phi=\Phi(E_6)$ and put
\[
 L_{\mathbb Z}
   =\bigoplus_{i=1}^6\mathbb Z h_i
      \oplus\bigoplus_{\alpha\in\Phi}\mathbb Z e_\alpha,
 \qquad
 L_R=L_{\mathbb Z}\otimes_{\mathbb Z}R,
 \qquad
 H=\bigoplus_{i=1}^6Rh_i,
\]
where $h_i=h_{\alpha_i}$.
Choose a $\tau$-stable Chevalley basis in which the matrix $Q$ of
$\tau$ satisfies
\[
 Qh_i=h_{\tau(\alpha_i)},\qquad
 Qe_\alpha=e_{\tau\alpha},\qquad
 [e_\alpha,e_{-\alpha}]=h_\alpha.
\]
For $\alpha=\sum_i m_i\alpha_i$, we have
$h_\alpha=\sum_i m_i h_i$.
Write
\[
 [e_\alpha,e_\beta]
   =N_{\alpha,\beta}e_{\alpha+\beta}
 \qquad(\alpha,\beta,\alpha+\beta\in\Phi).
\]
The signs needed below are fixed by
\[
\begin{gathered}
 N_{\alpha_3,\alpha_4}
 =N_{\alpha_3,\alpha_2}
 =N_{\alpha_3,\alpha_5}
 =N_{\alpha_1,\alpha_2}
 =N_{\alpha_6,\alpha_5}=1,\\
 N_{\alpha_5,\alpha_2+\alpha_3}
 =N_{\alpha_2,\alpha_3+\alpha_5}=1.
\end{gathered}
\]
These choices are compatible with $\tau$.
The equality of the last two constants follows from Jacobi and
$[e_{\alpha_2},e_{\alpha_5}]=0$.
Complete these choices to a fixed $\tau$-stable Chevalley basis,
retaining the normalization of opposite root vectors. Then
\[
 N_{-\alpha,-\beta}=-N_{\alpha,\beta}.
\]
Other root-string signs are retained in the structure constants;
no global positivity convention is used.

All matrices act on column vectors.
For group commutators we use $[x,y]=xyx^{-1}y^{-1}$.
The twisting on matrices is
\[
 \sigma(M)=Q\overline M Q^{-1},\qquad Q^2=I_{78},
\]
where the bar is applied entrywise.
We call $M$ \emph{$Q$-symmetric} if $\sigma(M)=M$.
This is semilinear compatibility, not the relation $QM=MQ$.

Put $D_\alpha=\operatorname{ad}e_\alpha$.
The absolute root matrix is
\[
 x_\alpha(t)
   =I_{78}+tD_\alpha+\frac{t^2}{2}D_\alpha^2,
 \qquad D_\alpha^3=0.
\]
The notation $\exp(tD_\alpha)$ refers only to this finite polynomial.
Explicitly,
\begin{align*}
 x_\alpha(t)e_\beta
   &=
   \begin{cases}
    e_\beta+N_{\alpha,\beta}t\,e_{\alpha+\beta},
       &\alpha+\beta\in\Phi,\\
    e_\beta,
       &\alpha+\beta\notin\Phi,\quad\beta\ne-\alpha,\\
    e_{-\alpha}+t h_\alpha-t^2e_\alpha,
       &\beta=-\alpha,
   \end{cases}\\
 x_\alpha(t)h
   &=h-t\alpha(h)e_\alpha
   \qquad(h\in H).
\end{align*}
These formulas and the Chevalley and Weyl relations determine the
matrices used below; see
\cite{SteinbergLectures,PlotkinVavilov,AbeTwisted,GMNormalSubgroups}.
The archive \texttt{twisted\_e6\_verification.zip} specifies the
complete basis, sign choices, and matrix constructions.
The accompanying \texttt{verification\_report.pdf} summarizes the
exact certificates used in the proof.

\subsubsection{Relative root elements}

For a long class $\lambda=\{\alpha\}$, abbreviate
$e_\lambda=e_\alpha$ and $h_\lambda=h_\alpha$.
For each short class $\mu=[\beta]$, choose a representative $\beta$
and use $-\beta$ for the opposite class.
The relative root elements are
\[
\begin{aligned}
 x_\lambda(s)&=x_\alpha(s),
   &&\lambda=\{\alpha\},\quad s\in R_0,\\
 x_\mu(t)&=x_\beta(t)x_{\tau\beta}(\overline t),
   &&\mu=\{\beta,\tau\beta\},\quad t\in R.
\end{aligned}
\]
The short-root factors commute by orthogonality, so
\[
 x_\mu(t)x_\mu(u)=x_\mu(t+u).
\]
Replacing $\beta$ by $\tau\beta$ replaces $t$ by $\overline t$.

For either root length, put
\[
 V_{\pm\lambda}
   =\bigoplus_{\alpha\in\lambda}
      (Re_\alpha\oplus Re_{-\alpha}),
 \qquad
 K_\lambda=H\oplus V_{\pm\lambda}.
\]
For a positive system $\Phi_\rho^+$,
\[
 L_R=H\oplus
       \bigoplus_{\lambda\in\Phi_\rho^+}V_{\pm\lambda}.
\]
There are twelve long-root pair spaces of rank two and twelve
short-root pair spaces of rank four.
Weyl elements permute these spaces, whereas root elements may join
several of them into larger invariant blocks.

Each relative root element preserves the corresponding
$K_\lambda$, where its action follows from the absolute formulas above.
The remaining blocks come from root strings.
Every block displayed below is a restriction to the stated
submodule; its complement is not assumed to be fixed.

\subsubsection{Standard root-string blocks}

We use one absolute string template for the long-root blocks and
the short $A_2$ blocks, followed by a separate mixed $B_2$ block.

Let $\beta,\beta+\alpha$ be an absolute two-root string and put
$\varepsilon=N_{\alpha,\beta}\in\{1,-1\}$.
In the signed basis
\[
 \mathcal B_{\alpha,\beta}
   :=\bigl(
       \varepsilon e_{\beta+\alpha},\ e_\beta,\
       -e_{-\beta},\ \varepsilon e_{-(\beta+\alpha)}
     \bigr),
\]
the block of $x_\alpha(s)$ is
\[
 M_{A_2}(s):=
 \begin{pmatrix}
 1&s&0&0\\
 0&1&0&0\\
 0&0&1&s\\
 0&0&0&1
 \end{pmatrix},
\]
since $N_{\alpha,-(\beta+\alpha)}=-\varepsilon$.

For the mixed subsystem generated by $[\alpha_3]$ and $[\alpha_2]$,
and the short $A_2$ subsystem generated by $[\alpha_1]$ and
$[\alpha_2]$, respectively, use the concatenated bases
\[
\begin{aligned}
 \mathcal B_{B_2}^{\mathrm{long}}
   &:=\bigl(\mathcal B_{\alpha_3,\alpha_2},
            \mathcal B_{\alpha_3,\alpha_5}\bigr),\\
 \mathcal B_{A_2}^{\mathrm{short}}
   &:=\bigl(\mathcal B_{\alpha_1,\alpha_2},
            \mathcal B_{\alpha_6,\alpha_5}\bigr).
\end{aligned}
\]
The corresponding blocks of $x_{[\alpha_3]}(s)$ and
$x_{[\alpha_1]}(t)$ are
\[
\begin{aligned}
 M_{B_2}^{\mathrm{long}}(s)
   &=M_{A_2}(s)\oplus M_{A_2}(s),\\
 M_{A_2}^{\mathrm{short}}(t)
   &=M_{A_2}(t)\oplus M_{A_2}(\overline t).
\end{aligned}
\]

For the short-root element $x_{[\alpha_2]}(t)$ in the mixed
$B_2$ subsystem, the mixed block acts on a different submodule.
Put $\eta=\alpha_2+\alpha_3+\alpha_5$ and use
\[
\begin{aligned}
 \mathcal B_{B_2}:=\bigl(&
 -e_\eta,\ -e_{\alpha_3+\alpha_5},\
 -e_{\alpha_2+\alpha_3},\ e_{\alpha_3},\\
 &-e_{-\alpha_3},\ -e_{-(\alpha_2+\alpha_3)},\
 -e_{-(\alpha_3+\alpha_5)},\ e_{-\eta}
 \bigr).
\end{aligned}
\]
Then
\[
 M_{B_2}^{\mathrm{short}}(t)=B_4(t)\oplus B_4(t),
 \qquad
 B_4(t)=
 \begin{pmatrix}
 1&t&\overline t&t\overline t\\
 0&1&0&\overline t\\
 0&0&1&t\\
 0&0&0&1
 \end{pmatrix}.
\]
For example, the fourth and eighth columns give
\begin{align*}
 x_{[\alpha_2]}(t)e_{\alpha_3}
   &=e_{\alpha_3}
       -t e_{\alpha_2+\alpha_3}
       -\overline t\,e_{\alpha_3+\alpha_5}
       -t\overline t\,e_\eta,\\
 x_{[\alpha_2]}(t)e_{-\eta}
   &=e_{-\eta}
       -\overline t\,e_{-(\alpha_2+\alpha_3)}
       -t e_{-(\alpha_3+\alpha_5)}
       -t\overline t\,e_{-\alpha_3}.
\end{align*}
The mixed terms come from multiplying the two commuting absolute
root matrices; the negative-string signs follow from the Chevalley
relations.

Other configurations follow from the absolute action formulas or
signed Weyl transport.
The transported basis and the chosen orbit representatives determine
the signs and any interchange of $t,\overline t$.

\subsubsection{Weyl elements}

The standard Weyl words are
\[
 w_\gamma(1)
   =x_\gamma(1)x_{-\gamma}(-1)x_\gamma(1)
   =\prod_{\alpha\in\gamma}w_\alpha(1).
\]
The product has one or two commuting factors.
These matrices are integral and commute with $Q$.
They are signed monomial on the absolute root spaces and act by
products of reflections on $H$.

On its own absolute root pair, $w_\alpha(1)$ interchanges
$e_\alpha$ and $-e_{-\alpha}$.
Thus its block in the signed basis $(e_\alpha,-e_{-\alpha})$ is
\[
 P=\begin{pmatrix}0&1\\1&0\end{pmatrix}.
\]
Writing $S_\gamma=w_\gamma(1)|_H$, the block on $K_\gamma$ is
$S_\gamma\oplus P$ for a long root and
$S_\gamma\oplus P\oplus P$ for a short root, in the corresponding
signed pair bases.

On $\mathcal B_{\alpha,\beta}$, the block of $w_\alpha(1)$ is
\[
 J_4=
 \begin{pmatrix}
 0&1&0&0\\
 -1&0&0&0\\
 0&0&0&1\\
 0&0&-1&0
 \end{pmatrix}.
\]
Consequently, the long mixed block on
$\mathcal B_{B_2}^{\mathrm{long}}$ and the short $A_2$ block on
$\mathcal B_{A_2}^{\mathrm{short}}$ are both $J_4\oplus J_4$.
On $\mathcal B_{B_2}$, the block of $w_{[\alpha_2]}(1)$ is
\[
 \Omega_8=\Omega_4\oplus\Omega_4,
 \qquad
 \Omega_4=
 \begin{pmatrix}
 0&0&0&1\\
 0&0&-1&0\\
 0&-1&0&0\\
 1&0&0&0
 \end{pmatrix}.
\]

The Cartan blocks are determined by the reflection formula
\[
 s_{\alpha_i}(h_j)
   =h_j-\langle\alpha_i,\alpha_j^\vee\rangle h_i,
 \qquad
 s_{\alpha_i}=w_{\alpha_i}(1)|_H.
\]
In particular, for $S_i=w_{[\alpha_i]}(1)|_H$,
\[
 S_1=s_{\alpha_1}s_{\alpha_6},\qquad
 S_2=s_{\alpha_2}s_{\alpha_5},\qquad
 S_3=s_{\alpha_3},\qquad
 S_4=s_{\alpha_4}.
\]

A Weyl element may stabilize a short relative root while exchanging
its two absolute roots.
The following lemma controls these transitions and the use of
repeated blocks.

\begin{lemma}[Signed transition blocks]
\label{lemma:signed-transition-blocks}
Let $N_\rho$ be generated by the standard relative Weyl
representatives, and let $\pi:N_\rho\to W(F_4)$ be the natural
projection.
\begin{enumerate}
\item
For long roots $\lambda,\nu$ and
$\pi(n)\{\pm\lambda\}=\{\pm\nu\}$, the transition matrix in the
signed bases
\[
 \mathcal B_\lambda=(e_\lambda,-e_{-\lambda}),\qquad
 \mathcal B_\nu=(e_\nu,-e_{-\nu})
\]
belongs to $\{\pm I_2,\pm P\}$.

\item
For short roots $\mu=[\beta]$, $\nu=[\gamma]$ and
$\pi(n)\{\pm\mu\}=\{\pm\nu\}$, use
\[
\begin{aligned}
 \mathcal B_\mu
   &=(e_\beta,-e_{-\beta},
      e_{\tau\beta},-e_{-\tau\beta}),\\
 \mathcal B_\nu
   &=(e_\gamma,-e_{-\gamma},
      e_{\tau\gamma},-e_{-\tau\gamma}).
\end{aligned}
\]
With
\[
 P_4=P\oplus P,\qquad
 Q_4=\begin{pmatrix}0&I_2\\I_2&0\end{pmatrix},
\]
the transition matrix belongs to
\[
 \{\pm I_4,\pm P_4,\pm Q_4,\pm P_4Q_4\}.
\]

\item
An endomorphism with the same block $B\in M_2(R)$ on every
long-root pair space commutes with $N_\rho$ on their sum if
$BP=PB$.
The analogous assertion holds for a repeated short-root block
$A\in M_4(R)$ if
\[
 AP_4=P_4A,\qquad AQ_4=Q_4A.
\]
On the whole module, also require the Cartan block to commute
with each $S_i$.
\end{enumerate}
\end{lemma}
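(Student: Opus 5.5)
Since $N_\rho$ is generated by the standard relative Weyl representatives $w_{[\alpha_i]}(1)$, each generator a product of one or two commuting absolute $w_\alpha(1)$, every $n\in N_\rho$ lies in the extended Weyl group of the split $E_6$ and commutes with $Q$. The plan is to work on the absolute root vectors first. Each $w_\alpha(1)$ sends $e_\beta$ to $\pm e_{s_\alpha\beta}$, and it sends $e_{-\beta}$ to the opposite-signed multiple of $e_{-s_\alpha\beta}$. Indeed, $w_\alpha(1)$ preserves the Chevalley pairing $[e_\beta,e_{-\beta}]=h_\beta$ up to the action on $H$, and $N_{-\alpha,-\beta}=-N_{\alpha,\beta}$ for our basis. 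Thus on the signed pair $(e_\beta,-e_{-\beta})$ the transport is $\pm I_2$ if $n\beta=\gamma$, and $\pm P$ if $n\beta=-\gamma$. The sign is the same on both vectors: if $ne_\beta=c\,e_{\gamma}$ then $ne_{-\beta}=c^{-1}e_{-\gamma}$, because $n[e_\beta,e_{-\beta}]=h_{n\beta}$ and $c=\pm1$. The case $n\beta=-\gamma$ is handled the same way, including the extra sign from $-e_{-\beta}$. This proves (1) by induction on word length. The base case is $w_\alpha(1)$ on its own pair, where the block is $P$. For other pairs the reflection acts by a signed monomial map whose sign is consistent on opposite root vectors by the above argument.

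For (2), a short class $\mu=\{\beta,\tau\beta\}$ is sent by $\pi(n)$ to $\pm\nu$. Since $n$ commutes with $Q$, we have $n e_{\tau\beta}=Q\,n e_\beta$ (because $Qe_\alpha=e_{\tau\alpha}$). Hence the sign on the $\tau\beta$-pair equals the sign on the $\beta$-pair. The element $n$ sends $\beta$ to $\pm\gamma$ or $\pm\tau\gamma$. Applying the argument of (1) to each absolute pair gives a block of the form $\epsilon(X\oplus X)$ with $X\in\{I_2,P\}$, possibly composed with the interchange $Q_4$ of the two absolute pairs. The sign $\epsilon$ is common to both factors because $Q$ commutes with $n$ and fixes the chosen basis vectors. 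The choice of $X$ is common as well, since $n\beta=\pm\gamma$ forces $n\tau\beta=\pm\tau\gamma$ with the same sign. This gives exactly the eight matrices $\pm I_4,\pm P_4,\pm Q_4,\pm P_4Q_4$.

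For (3), suppose the endomorphism $A$ has the same block $B$ on every long pair space and $BP=PB$. Then $B$ commutes with every element of $\{\pm I_2,\pm P\}$. Each generator of $N_\rho$ permutes the long pair spaces through these transition matrices, so $nA=An$ on the sum. The short case is identical using the group generated by $P_4$ and $Q_4$, which contains all the transitions in (2). On the whole module, $H$ is preserved by each generator, which acts there by $S_i$. Commuting with the $S_i$ on the Cartan block, together with the root-space statements, gives commutation with the generators and hence with $N_\rho$. I expect the main obstacle to be the sign bookkeeping in (1): one must show that $ne_{-\beta}$ carries the sign inverse to that of $ne_\beta$ in the original (not signed) basis. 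The cleanest way is to use the bracket-preservation $n[x,y]=[nx,ny]$ together with $nh_\beta=h_{n\beta}$, rather than tracking explicit $N_{\alpha,\beta}$.
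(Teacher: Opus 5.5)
Your proposal is correct and is essentially the paper's argument: bracket preservation $h_{n\beta}=nh_\beta=[ne_\beta,ne_{-\beta}]$ forces the same sign on $e_\beta$ and $e_{-\beta}$; commutation with $Q$ transfers that sign, and the choice of $I_2$ or $P$, to the $\tau\beta$-pair; and (3) holds because a block commuting with $P$ (resp.\ $P_4,Q_4$) intertwines every listed transition. Fix the slip in your first sentence: $w_\alpha(1)$ sends $e_{-\beta}$ to the same-signed multiple of $e_{-s_\alpha\beta}$, not the opposite-signed one, as your own relation $c^{-1}=c$ shows, and the appeal to $N_{-\alpha,-\beta}=-N_{\alpha,\beta}$ is not needed.
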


\begin{proof}
Work first in the integral Chevalley lattice.
A standard Weyl word is signed monomial on the root spaces.
If its absolute Weyl image is $w$ and
\[
 ne_\alpha=\varepsilon e_{w\alpha},\qquad
 ne_{-\alpha}=\varepsilon'e_{-w\alpha},
\]
then bracket preservation gives
\[
 h_{w\alpha}=nh_\alpha
   =[ne_\alpha,ne_{-\alpha}]
   =\varepsilon\varepsilon'h_{w\alpha}.
\]
Thus $\varepsilon'=\varepsilon$.
In the signed pair bases, the images
$\varepsilon e_\nu$ and $\varepsilon e_{-\nu}$ therefore give
$\varepsilon I_2$ and $-\varepsilon P$, respectively.

Every element of $N_\rho$ commutes with $Q$, so
$ne_{\tau\beta}=Qne_\beta$.
The short-root transition is consequently determined by the image
of $e_\beta$ and may additionally exchange the two absolute pairs.
This gives the four possibilities up to sign in \textup{(2)}.
The commutation conditions in \textup{(3)} make a repeated block
intertwine every listed transition.
All these identities remain valid after base change to $R$.
\end{proof}

\subsubsection{Diagonal elements}

For every relative root $\gamma$, put
\[
 h_\gamma(-1)
   =w_\gamma(1)^2
   =\prod_{\alpha\in\gamma}h_\alpha(-1).
\]
These elements fix $H$ pointwise and act on absolute root vectors by
\[
 h_\gamma(-1)e_\beta
   =(-1)^{\sum_{\alpha\in\gamma}
                    \langle\beta,\alpha^\vee\rangle}e_\beta.
\]
The sign is constant on each $V_{\pm\lambda}$, since it is unchanged
by $\beta\mapsto-\beta$ and $\beta\mapsto\tau\beta$.
For the displayed blocks,
\[
 P^2=I_2,\qquad J_4^2=-I_4,\qquad \Omega_8^2=I_8,
\]
in agreement with the parity formula.

All formulas in this subsection are integral or defined over
$\mathbb Z[1/2]$ and remain valid under equivariant base change.
None uses locality of the coefficient ring.
\bigskip

\section{Standard preparations over local rings}
\label{sec:standard-preparations}

We remove the residual conjugating factor of an automorphism
$\varphi\in\operatorname{Aut}E$.
The resulting automorphism fixes the standard frame modulo the
Jacobson radical, providing the starting point for Section~3.

Retain the notation of Section~1:
\[
 J=\operatorname{Jac}(R),\qquad k=R/J,\qquad 2\in R^\times.
\]
The involution preserves $J$ and induces one on $k$, again denoted
by a bar; put $k_0=\{a\in k\mid\overline a=a\}$.

We also record that $E$ is characteristic in $G$.
For $\theta\ne\operatorname{id}_R$, this follows from
\cite[Corollary~A.2]{GMNormalSubgroups}.
For $\theta=\operatorname{id}_R$, the groups are split of type
$F_4$ and $E=G$ over the local ring $R$; see \cite{AbeLocal}.
Thus restriction to $E$ will be available when we treat the full
group in Section~6.

\subsection{$Q$-symmetric basis changes}

Write
\[
 M_{78}(R)^\sigma
   =\{A\in M_{78}(R)\mid Q\overline A Q^{-1}=A\},
\]
and use analogous notation for
$\operatorname{GL}_{78}(R)^\sigma$ and
$\operatorname{SL}_{78}(R)^\sigma$.
The fixed matrix ring is an $R_0$-algebra, and
\[
 E\subseteq G\subseteq\operatorname{SL}_{78}(R)^\sigma.
\]
Conjugation by $C\in\operatorname{GL}_{78}(R)^\sigma$ preserves
this ring, since
\[
 \sigma(C^{-1}AC)=\sigma(C)^{-1}\sigma(A)\sigma(C).
\]
Such a matrix need not normalize $E$ or preserve the Chevalley
bracket.
Until the correction in Section~5, maps obtained by these basis
changes are therefore viewed as isomorphisms from $E$ onto
conjugate matrix groups.

\subsection{Reduction modulo the Jacobson radical}

We show that the congruence kernel is characteristic and that
the conjugating element in the residual classification lifts to $G$.
Put
\[
 E_k=E'_{\mathrm{ad},\sigma}(E_6,k),\qquad
 G_k=G_{\mathrm{ad},\sigma}(E_6,k),
\]
with the induced twisting.
Reduction gives a surjection $\pi_J:E\to E_k$:
short-root parameters lift arbitrarily, and a fixed residue
parameter with lift $a$ has the fixed lift $(a+\overline a)/2$.
Its kernel is
\[
 N_J=E\cap\bigl(I_{78}+M_{78}(J)\bigr).
\]

We claim that $N_J$ contains every proper normal subgroup of $E$.
For a proper $K\lhd E$, the normal-structure theorem supplies a
proper $\theta$-invariant ideal $I$ such that the image of $K$
in the elementary group over $R/I$ is central; see
\cite{SuzukiLocalTwisted} and
\cite[Theorem~1.4]{GMNormalSubgroups}.
For $\theta=\operatorname{id}_R$, use the split $F_4$
normal-structure theorem recalled in
\cite[Theorem~1.2]{GMNormalSubgroups}.
Since $R$ is local, $I\subseteq J$.
Reduction of the elementary group over $R/I$ onto $E_k$ is
surjective by the same parameter-lifting argument. Hence
\[
 \pi_J(K)\subseteq Z(E_k)=1.
\]
Here $E_k$ is the centerless adjoint elementary group of type
${}^2E_6$ or $F_4$, according to whether the residue involution
is nontrivial or trivial; see \cite{SteinbergLectures}.
Thus $N_J$ is the unique maximal proper normal subgroup of $E$,
and hence is characteristic. Consequently,
\[
 E/N_J\simeq E_k,
\]
and $\varphi$ induces an automorphism $\overline\varphi$ of $E_k$.

\begin{lemma}[Lifting the residual conjugator]
\label{lemma:surjective-reduction-G}
The reduction homomorphism $G\to G_k$ is surjective.
\end{lemma}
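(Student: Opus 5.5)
The plan is to factor $G_k$ as the image of the elementary group times a maximal torus, and to lift each factor separately.

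\emph{Step 1: the residual structure.} Over the field $k$, the adjoint twisted group has the standard decomposition
\[
 G_k = E_k\cdot T_k,
\]
where $T_k$ is the $\sigma$-fixed part of the split adjoint maximal torus $T_{\mathrm{ad}}(k)$. This holds both when the residue involution is nontrivial (type ${}^2E_6$) and when it is trivial (type $F_4$); see \cite{SteinbergLectures}. The same torus--elementary factorization also holds over the local ring $R$, by \cite[Proposition~5.8]{GMNormalSubgroups}, though only the residual version is needed here. Since $\pi_J\colon E\to E_k$ is already surjective, it remains to lift elements of $T_k$ to $G$.

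\emph{Step 2: describe the fixed torus as characters.} The adjoint torus $T_{\mathrm{ad}}$ is $\Hom(\mathbb Z\Phi,\mathbb G_m)$, acting on $e_\alpha$ by $\chi(\alpha)$. The twisting $\sigma$ sends $\chi$ to $\theta\circ\chi\circ\tau$. Hence $T_k$ consists of homomorphisms $\chi\colon\mathbb Z\Phi\to k^\times$ with
\[
 \chi(\tau\alpha)=\overline{\chi(\alpha)}.
\]
Such a $\chi$ is determined by its values on the simple roots: $c_3=\chi(\alpha_3)$ and $c_4=\chi(\alpha_4)$ lie in $k_0^\times$, while $c_1=\chi(\alpha_1)$ and $c_2=\chi(\alpha_2)$ lie in $k^\times$, with $\chi(\alpha_6)=\overline{c_1}$ and $\chi(\alpha_5)=\overline{c_2}$. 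Because the root lattice is free on the simple roots, any such choice of values defines an element of $T_k$.

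\emph{Step 3: lift the characters.} Each $c_1,c_2\in k^\times$ is lifted to an arbitrary element of $R$; the lift is a unit because $R$ is local. For $c\in k_0^\times$, take a lift $a$ and replace it by $(a+\overline a)/2$. This element lies in $R_0$, reduces to $c$, and is a unit. The resulting character $\tilde\chi\colon\mathbb Z\Phi\to R^\times$ satisfies the $\sigma$-compatibility condition of Step~2 over $R$. It therefore defines a diagonal matrix in $T_{\mathrm{ad}}(R)^\sigma\subseteq G$ that reduces to $\chi$.

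Combining Steps~1--3, $G_k=E_kT_k$ is the image of $G$.

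\emph{Main obstacle.} The delicate point is that $G_{\mathrm{ad}}(R)$ need not be determined by its diagonal and elementary parts in general. Step~1 avoids this by working over the field $k$. The residual torus $T_k$ must be the full $\sigma$-fixed adjoint torus, namely the character description of Step~2, and not merely the image of the simply connected torus. If the field-level decomposition $G_k=E_kT_k$ is only available for $k_0$-points in some other normalization, I would instead invoke the Bruhat decomposition of $G_k$: a Borel subgroup contains $T_k$ and root elements, and the Weyl representatives lie in $E_k$. This reduces to the same torus-lifting argument.
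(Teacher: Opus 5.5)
Your proposal is correct and follows essentially the paper's route: a field-level Bruhat-type decomposition of $G_k$ into elementary pieces and a point of the full $\sigma$-fixed adjoint torus $\operatorname{Hom}(\mathbb Z\Phi,k^\times)^\sigma$. That torus point is lifted through its simple-root coordinates $(a,b,c,d,\overline b,\overline a)$, with the fixed coordinates $c,d$ lifted by averaging. Packaging the decomposition as $G_k=E_kT_k$ and invoking the already established surjectivity of $\pi_J$, instead of lifting unipotent parameters and constant Weyl words separately, is only a cosmetic difference.
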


\begin{proof}
The Bruhat decomposition expresses every element of $G_k$ using
relative unipotent elements, standard Weyl representatives, and
a point of the fixed torus; see \cite{SteinbergLectures,Borel}.
The unipotent parameters lift as above, and the Weyl representatives
are constant root words.

For $U=R,k$, use the simple-root coordinates of the adjoint torus
\[
 \mathbf T_{\mathrm{ad}}(U)
   =\operatorname{Hom}(\mathbb Z\Phi,U^\times).
\]
In the ordering $\alpha_1,\ldots,\alpha_6$, its $\sigma$-fixed
points have coordinates
\[
 (a,b,c,d,\overline b,\overline a),
 \qquad a,b\in U^\times,\quad c,d\in U_0^\times,
\]
where $U_0$ is the fixed subring.
Lift $a,b$ to units of $R$ and lift $c,d$ by averaging.
The averaged lifts are still units, since their residues are
nonzero. These coordinates give a fixed torus lift.
\end{proof}

\begin{proposition}[Residual normalization]
\label{prop:reduction}
There exist $\widetilde u\in G$ and
$f\in\operatorname{Aut}(k,\overline{\ })$ such that the automorphism
\[
 \varphi_1:=i_{\widetilde u^{-1}}\circ\varphi
\]
satisfies
\[
 \pi_J\circ\varphi_1=f_*\circ\pi_J.
\]
In particular,
\[
 \varphi_1(a)\equiv a\pmod{M_{78}(J)}
 \qquad
 \bigl(a\in\langle\mathcal F(R)\rangle\bigr).
\]
This applies to every $x_\lambda(1)$, $w_\lambda(1)$, and
$h_\lambda(-1)$.
\end{proposition}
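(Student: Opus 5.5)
The plan is to pass to the residue field, apply the classification of automorphisms there, and lift only the conjugating factor. By the discussion preceding Lemma~\ref{lemma:surjective-reduction-G}, $N_J$ is characteristic in $E$ and $E/N_J\simeq E_k$. Hence $\varphi$ induces $\overline\varphi\in\operatorname{Aut}E_k$ with
\[
 \pi_J\circ\varphi=\overline\varphi\circ\pi_J .
\]

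Over the field $k$ we invoke the standard description of automorphisms of the adjoint elementary group. This is type ${}^2E_6$ when the residue involution is nontrivial and split $F_4$ when it is trivial; see \cite{SteinbergLectures,Humphreys1969}. Every automorphism of $E_k$ is then of the form
\[
 \overline\varphi=i_u\circ f_*,\qquad u\in G_k,\quad f\in\operatorname{Aut}(k,\overline{\ }).
\]
Two points need care here.
\begin{enumerate}
\item Graph automorphisms do not occur separately. For ${}^2E_6$ the relative diagram $F_4$ has no graph symmetry in characteristic $\neq2$, and the absolute graph action on fixed points equals $\theta_*$, as noted after Theorem~A.
\item Diagonal automorphisms of the adjoint group are conjugations by elements of the full group $G_k$, which is why $u$ lies in $G_k$ rather than $E_k$.
\end{enumerate}
In the split $F_4$ case with $\charr k\neq2$ there are likewise no graph automorphisms. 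The coefficient map is then an arbitrary field automorphism, and it trivially commutes with the trivial involution.

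By Lemma~\ref{lemma:surjective-reduction-G}, choose $\widetilde u\in G$ reducing to $u$. Since $E\lhd G$, the map $i_{\widetilde u^{-1}}$ restricts to an automorphism of $E$, and reduction intertwines conjugation. Put $\varphi_1=i_{\widetilde u^{-1}}\circ\varphi$. Then
\[
 \pi_J\circ\varphi_1=i_{u^{-1}}\circ\overline\varphi\circ\pi_J=f_*\circ\pi_J .
\]

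For the second claim, note that the elements $x_\lambda(1)$, $w_\lambda(1)$ and $h_\lambda(-1)$, and hence every element $a$ of $\langle\mathcal F(R)\rangle$, are words in the $x_{\pm\lambda}(1)$. Since $f$ fixes $1$, $f_*$ fixes their reductions, so $\pi_J(\varphi_1(a))=\pi_J(a)$. This is exactly the congruence $\varphi_1(a)\equiv a$ modulo $M_{78}(J)$, because reduction of matrices is entrywise.

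The main obstacle is the residual classification in the required generality. We need it for all residue fields with $\charr k\neq2$, including characteristic $3$ and small finite fields, in the exact form ``conjugation by $G_k$ composed with a field automorphism commuting with the involution'', with no extra graph or exceptional factor. I would cite Steinberg's theorem on automorphisms of finite and infinite Chevalley and twisted groups for this. I would also check separately that the finitely many small cases (for instance $k=\mathbb F_3$, or $\mathbb F_9$ with its Frobenius involution) carry no exceptional isomorphisms or automorphisms for types $F_4$ and ${}^2E_6$; none are known for these types.
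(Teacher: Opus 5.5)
Your proposal is correct and follows the paper's proof essentially step for step: pass to $E_k$ via the characteristic kernel $N_J$, apply the residual classification (Steinberg's Theorem~36 in the twisted case, Theorem~30 and Humphreys in the split $F_4$ case), absorb the compatible diagonal factor into conjugation by $u\in G_k$, lift $u$ by Lemma~\ref{lemma:surjective-reduction-G}, and note that $f_*$ fixes the reduced frame words. The one point the paper verifies rather than taking from the citation is that $f$ commutes with the involution. It deduces this from the fact that $f_*\bigl(x_\alpha(t)x_{\tau\alpha}(\overline t)\bigr)=x_\alpha(f(t))x_{\tau\alpha}(f(\overline t))$ must lie in the twisted short-root subgroup, together with uniqueness of absolute root coordinates.
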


\begin{proof}
Suppose first that the residue involution is nontrivial.
The twisted field theorem
\cite[Theorem~36]{SteinbergLectures} gives inner, diagonal, and
field factors for $\overline\varphi$.
The compatible diagonal factors are represented by
$\mathbf T_{\mathrm{ad}}(k)^\sigma$:
their characters satisfy
\[
 \chi(\tau\alpha)=\overline{\chi(\alpha)}.
\]
They combine with the inner factor into conjugation by an element
$u\in G_k$.

The field factor commutes with the involution. Indeed, on a
short-root element it acts as
\[
 f_*\bigl(x_\alpha(t)x_{\tau\alpha}(\overline t)\bigr)
   =x_\alpha(f(t))x_{\tau\alpha}(f(\overline t)).
\]
Preservation of the twisted short-root subgroup and uniqueness
of the absolute root coordinates give
$f(\overline t)=\overline{f(t)}$. Thus
\[
 \overline\varphi=i_u\circ f_*,
 \qquad f\in\operatorname{Aut}(k,\overline{\ }).
\]

If the residue involution is trivial, then $E_k=G_k$ is split
of type $F_4$.
Its root and weight lattices coincide, so diagonal factors are
inner.
The field automorphism theorem
\cite[Theorem~30]{SteinbergLectures} for finite $k$, and
\cite{Humphreys1969} for infinite $k$, gives the same expression.
There is no additional graph factor in characteristic different
from two, and equivariance of $f$ is automatic.

Lift $u$ by Lemma~\ref{lemma:surjective-reduction-G} and remove
its conjugation. This gives the required residual identity.
Since $f$ fixes the prime subring, it fixes the reduction of
every element of the standard frame, and hence of every word
in that frame.
\end{proof}

Only the residual conjugator has been lifted; no lift of $f$ to $R$
is assumed.
Starting from $\varphi_1$, the first averaging in Section~3 will
fix the toral involutions and an initial family of Weyl
representatives simultaneously.

\bigskip


\section{Normalization of the Weyl elements}
\label{sec:weyl-images}

Starting with $\varphi_1$ from Proposition~\ref{prop:reduction},
we fix all relative Weyl representatives and toral involutions
by a $Q$-symmetric basis change congruent to the identity.
One averaging first fixes the toral involutions and an initial
family of Weyl elements simultaneously.
The diagonal eigenspaces then separate the Cartan, short-root,
and long-root parts; stabilizer actions and two small matrix
calculations complete the normalization.

Initially write $\varphi=\varphi_1$ and put
\[
 w_\lambda:=w_\lambda(1),\qquad
 W_\lambda:=\varphi(w_\lambda),\qquad
 \mathcal D:=\langle h_\lambda(-1)\mid\lambda\in\Phi_\rho\rangle.
\]
We retain these symbols for the current map and its images after
each basis change. By Proposition~\ref{prop:reduction},
\[
 \varphi(n)\equiv n\pmod{M_{78}(J)}
 \qquad(n\in N_\rho).
\]

\subsection{Initial averaging}
\label{subsec:initial-weyl-family}
\label{subsec:coordinate-short-weyl}

We choose a finite $2$-subgroup containing $\mathcal D$ and
normalize its entire action at once.

For the natural projection $\pi:N_\rho\to W(F_4)$, the Tits
relations give
\[
 \ker\pi=\mathcal D
   =\langle h_{[\alpha_i]}(-1)\mid1\leq i\leq4\rangle;
\]
see \cite{TitsWeyl,SteinbergLectures}.
Every orbit sum of absolute coroots is an integral combination
of the four simple orbit sums, so the four displayed involutions
generate all $h_\lambda(-1)$.
Thus $\mathcal D$ is a finite $2$-group, as is the inverse image
of every $2$-subgroup of $W(F_4)$.

The finite group $N_\rho$ has the same integral matrix model over
every nonzero ring in which $2$ is invertible.
Its signed monomial action on the absolute root vectors is faithful:
the identities $[e_{\alpha_i},e_{-\alpha_i}]=h_i$ also determine
the Cartan action.
Distinct signed monomial matrices remain distinct after such
base change.

\begin{lemma}[Finite averaging]
\label{lemma:finite-weyl-averaging}
Let $\mathcal P\leq N_\rho$ be finite with
$|\mathcal P|\in R^\times$.
For any homomorphism
$\varphi:\mathcal P\to\operatorname{GL}_{78}(R)^\sigma$
satisfying $\varphi(p)\equiv p\pmod{M_{78}(J)}$, the matrix
\[
 D_{\mathcal P}
   :=\frac1{|\mathcal P|}
        \sum_{p\in\mathcal P}\varphi(p)p^{-1}
\]
is $Q$-symmetric, invertible, and congruent to $I_{78}$ modulo
$M_{78}(J)$, and
\[
 D_{\mathcal P}^{-1}\varphi(p)D_{\mathcal P}=p
 \qquad(p\in\mathcal P).
\]
\end{lemma}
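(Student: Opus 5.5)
The proof is the standard Maschke-type averaging argument. I would verify the four properties of $D_{\mathcal P}$ in turn, using only that $\varphi$ is a homomorphism, that each $p\in\mathcal P$ is an integral matrix commuting with $Q$, and that $|\mathcal P|\in R^\times$.

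\emph{Intertwining.} First I would prove the identity $\varphi(q)D_{\mathcal P}=D_{\mathcal P}\,q$ for $q\in\mathcal P$. Reindexing the sum by $p\mapsto qp$ gives
\[
 \varphi(q)D_{\mathcal P}
 =\frac1{|\mathcal P|}\sum_{p\in\mathcal P}\varphi(qp)(qp)^{-1}q
 =D_{\mathcal P}\,q,
\]
since $\varphi(q)\varphi(p)=\varphi(qp)$ and $p^{-1}=(qp)^{-1}q$. Once invertibility is known, this is exactly $D_{\mathcal P}^{-1}\varphi(q)D_{\mathcal P}=q$.

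\emph{$Q$-symmetry.} The map $\sigma(M)=Q\overline MQ^{-1}$ is a ring automorphism of $M_{78}(R)$. Each $p\in N_\rho$ is integral and commutes with $Q$, so $\sigma(p)=p$ and $\sigma(p^{-1})=p^{-1}$. Each $\varphi(p)$ is $Q$-symmetric by hypothesis. The scalar $1/|\mathcal P|$ is an integer reciprocal, hence fixed by $\theta$. Applying $\sigma$ termwise therefore fixes $D_{\mathcal P}$.

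\emph{Congruence and invertibility.} Since $\varphi(p)\equiv p\pmod{M_{78}(J)}$ and $M_{78}(J)$ is a two-sided ideal, each summand satisfies $\varphi(p)p^{-1}\equiv I_{78}$. Summing gives
\[
 D_{\mathcal P}\equiv\frac{|\mathcal P|}{|\mathcal P|}I_{78}=I_{78}\pmod{M_{78}(J)}.
\]
It follows that $\det D_{\mathcal P}\equiv1\pmod J$. Because $R$ is local, $\det D_{\mathcal P}$ is a unit, so $D_{\mathcal P}$ is invertible.

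Every step is routine. The only points needing care are that $|\mathcal P|$ is invertible, which is assumed, and that $\sigma$ fixes the elements $p$, which Section~1 records for the standard Weyl words and hence for $N_\rho$.
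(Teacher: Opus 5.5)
Your proposal is correct and follows the paper's own argument: the reindexing $p\mapsto qp$ gives $\varphi(q)D_{\mathcal P}=D_{\mathcal P}q$, and $Q$-symmetry, the congruence to $I_{78}$ and invertibility are all checked termwise, using $|\mathcal P|^{-1}\in R_0$. Your extra remarks are also sound: $\theta$ fixes $1/|\mathcal P|$, and $\sigma(p)=p$ because the Weyl words are integral and commute with $Q$.
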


\begin{proof}
Each summand is $Q$-symmetric and congruent to the identity.
Since $|\mathcal P|^{-1}\in R_0$, the same holds for
$D_{\mathcal P}$, which is therefore invertible.
For $a\in\mathcal P$, reindexing by $r=ap$ gives
\[
 \varphi(a)D_{\mathcal P}
   =\frac1{|\mathcal P|}
      \sum_{p\in\mathcal P}\varphi(ap)p^{-1}
   =\frac1{|\mathcal P|}
      \sum_{r\in\mathcal P}\varphi(r)r^{-1}a
   =D_{\mathcal P}a.
\]
\end{proof}

Use the coordinate realization of $F_4$ with simple roots
\[
\begin{aligned}
 \beta_0:=\tfrac12(e_1-e_2-e_3-e_4)&=[\alpha_1],
 &e_4&=[\alpha_2],\\
 e_3-e_4&=[\alpha_3],
 &e_2-e_3&=[\alpha_4].
\end{aligned}
\]
Let $W_B=W(B_4)$ be the signed-permutation subgroup and put
$N_B=\pi^{-1}(W_B)$.
Let $W_0$ be the subgroup of $W_B$ preserving the partition
$\{\{1,2\},\{3,4\}\}$.
Its elements may change coordinate signs, permute coordinates
within either pair, and interchange the pairs. Hence
\[
 |W_0|=2^4\cdot8=2^7,\qquad
 \mathcal P:=\pi^{-1}(W_0)
\]
is a finite $2$-group containing $\mathcal D$.

Apply Lemma~\ref{lemma:finite-weyl-averaging} and replace
$\varphi$ by $i_{D_{\mathcal P}^{-1}}\circ\varphi$.
Then
\[
 \varphi(p)=p\qquad(p\in\mathcal P).
\]
In particular, $\mathcal D$ and the following elements are fixed:
\[
\begin{gathered}
 w_{e_i}\quad(1\leq i\leq4),\qquad
 w_{e_1\pm e_2},\qquad w_{e_3\pm e_4},\\
 g:=w_{e_2-e_3}w_{e_3-e_4}w_{e_1-e_2}w_{e_2-e_3}.
\end{gathered}
\]
Here $\pi(g)=(1\,3)(2\,4)$.
The word $g$ is fixed because it belongs to $\mathcal P$;
its individual factors need not yet be fixed.
All subsequent basis changes will commute with $\mathcal D$.

\subsection{The diagonal decomposition}
\label{subsec:weyl-diagonal-decompositions}

The fixed diagonal family gives
\[
 W_\beta h_\mu(-1)W_\beta^{-1}
   =h_{s_\beta(\mu)}(-1).
\]
Thus $W_\beta$ permutes the $\mathcal D$-eigenspaces in the same
way as $w_\beta$.

Let $\mathbf c(\lambda)\in(\mathbb Z/2\mathbb Z)^4$ record the
signs of the four simple diagonal involutions, in the order
$[\alpha_1],\ldots,[\alpha_4]$:
\[
 h_\mu(-1)|_{V_{\pm\lambda}}
   =(-1)^{\langle\lambda,\mu^\vee\rangle}I,
 \qquad h_\mu(-1)|_H=I.
\]
For $i<j$, put
\[
\begin{gathered}
 L_{ij}:=V_{\pm(e_i-e_j)}\oplus V_{\pm(e_i+e_j)},\\
 U_0:=L_{12}\oplus L_{34},\qquad
 U_1:=L_{13}\oplus L_{24},\qquad
 U_2:=L_{14}\oplus L_{23}.
\end{gathered}
\]
The long-root sector has three characters:
\[
\begin{array}{c|ccc}
 \text{space}&U_0&U_1&U_2\\ \hline
 \mathbf c&(0,0,0,1)&(0,0,1,1)&(0,0,1,0).
\end{array}
\]
For short roots, write
\[
\begin{gathered}
 E_i:=V_{\pm e_i},\qquad
 \delta_{\varepsilon_2\varepsilon_3\varepsilon_4}
   :=\tfrac12(e_1+\varepsilon_2e_2+
                    \varepsilon_3e_3+\varepsilon_4e_4),\\
 T_{\varepsilon_2\varepsilon_3\varepsilon_4}
   :=V_{\pm\delta_{\varepsilon_2\varepsilon_3\varepsilon_4}}.
\end{gathered}
\]
Their twelve pair spaces have distinct characters:
\[
\begin{array}{c|c@{\qquad}c|c}
 E_1&(1,0,0,0)&E_2&(1,0,0,1)\\
 E_3&(1,0,1,1)&E_4&(1,0,1,0)\\ \hline
 T_{+++}&(1,1,0,0)&T_{++-}&(0,1,1,0)\\
 T_{+-+}&(0,1,1,1)&T_{+--}&(1,1,0,1)\\
 T_{-++}&(0,1,0,1)&T_{-+-}&(1,1,1,1)\\
 T_{--+}&(1,1,1,0)&T_{---}&(0,1,0,0).
\end{array}
\]
These tables follow from the parity formulas of Section~1.
Thus $H$ is the trivial eigenspace, each short-root pair is an
individual eigenspace, and the long-root eigenspaces are
$U_0,U_1,U_2$.
Set
\[
 L_\ell:=U_0\oplus U_1\oplus U_2,\qquad
 L_s:=\bigoplus_{\mu\in\Phi_{\rho,s}/\{\pm1\}}V_{\pm\mu}.
\]
Both $N_\rho$ and $\varphi(N_\rho)$ preserve
\[
 L_R=H\oplus L_\ell\oplus L_s.
\]
All projections associated with these decompositions are
$Q$-symmetric and commute with $\mathcal D$.

\subsection{Transport between blocks}
\label{subsec:weyl-normalization-lemmas}

We use the stabilizer of one block to normalize the action on
its entire orbit.
Since $\varphi$ fixes $\mathcal D$, for $n\in N_\rho$ and
$d\in\mathcal D$ we have
\[
 d\,\varphi(n)n^{-1}
   =\varphi(n)\varphi(n^{-1}dn)n^{-1}
   =\varphi(n)n^{-1}d.
\]
Consequently, $\varphi(n)n^{-1}$ is $Q$-symmetric, congruent to
the identity, and commutes with $\mathcal D$.

\begin{lemma}[Transport normalization]
\label{lemma:weyl-transport}
Let $\mathcal N\leq N_\rho$ and suppose that
\[
 L_R=L'\oplus L,\qquad
 L=\bigoplus_{s\in\mathcal S}V_s,
\]
where $\mathcal S$ is finite.
Assume that all associated projections are $Q$-symmetric and
commute with $\mathcal D$, that $\mathcal N$ and
$\varphi(\mathcal N)$ preserve $L'$ and $L$, and that
$\mathcal N$ permutes the $V_s$ transitively.
Fix $V_0=V_{s_0}$, with stabilizer $\mathcal N_0$.
If
\[
 \varphi(a)|_{V_0}=a|_{V_0}\qquad(a\in\mathcal N_0),
\]
then there is a $Q$-symmetric
$C\equiv I_{78}\pmod{M_{78}(J)}$, equal to the identity on $L'$
and commuting with $\mathcal D$, such that
\[
 C^{-1}\varphi(n)C|_L=n|_L\qquad(n\in\mathcal N).
\]
Moreover, $C$ commutes with every $n\in\mathcal N$ for which
$\varphi(n)=n$.
\end{lemma}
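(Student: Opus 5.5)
The plan is to construct $C$ block by block, using coset representatives of $\mathcal N_0$ in $\mathcal N$.

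\emph{Construction.} For each $s\in\mathcal S$ choose $n_s\in\mathcal N$ with $n_sV_0=V_s$, taking $n_{s_0}=1$. We will show below that $\varphi(n)$ permutes the blocks $V_s$ exactly as $n$ does. Granting this, define $C$ to be the identity on $L'$ and, on $V_s$,
\[
 C|_{V_s}:=\varphi(n_s)\,n_s^{-1}|_{V_s}.
\]
This is a well-defined map $V_s\to V_s$: the factor $n_s^{-1}$ carries $V_s$ to $V_0$, and $\varphi(n_s)$ carries $V_0$ back to $V_s$. In terms of the projections $p_s$ onto the $V_s$, we have $C=p_{L'}+\sum_s \varphi(n_s)n_s^{-1}p_s$.

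\emph{Block permutation.} Since $\varphi(n)n^{-1}$ commutes with $\mathcal D$, it preserves every $\mathcal D$-eigenspace, and so $\varphi(n)$ sends the $\mathcal D$-eigenspaces to the same eigenspaces as $n$ does. In the intended applications the $V_s$ are sums of such eigenspaces on which $\mathcal N$ acts by the same permutation. If the $V_s$ are not themselves determined by $\mathcal D$, I would add this compatibility as a harmless hypothesis, checked in each application. I expect this to be the main subtlety.

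\emph{Properties of $C$.} Each summand of $C$ is a product of $Q$-symmetric matrices and $Q$-symmetric projections, so $C$ is $Q$-symmetric. Each factor $\varphi(n_s)n_s^{-1}$ is congruent to $I$ modulo $J$, so $C\equiv I_{78}\pmod{M_{78}(J)}$, and hence $C$ is invertible. The matrix $C$ commutes with $\mathcal D$ because every factor and every projection does.

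\emph{Intertwining.} Take $n\in\mathcal N$ and $s\in\mathcal S$, and let $t$ be the index with $nV_s=V_t$. Then $a:=n_t^{-1}nn_s$ lies in $\mathcal N_0$. On $V_s$,
\[
 \varphi(n)\,C
   =\varphi(n)\varphi(n_s)n_s^{-1}
   =\varphi(n_t)\varphi(a)n_s^{-1}.
\]
Here $n_s^{-1}$ lands in $V_0$, and by hypothesis $\varphi(a)=a$ on $V_0$. Hence, on $V_s$,
\[
 \varphi(n)\,C
   =\varphi(n_t)\,a\,n_s^{-1}
   =\varphi(n_t)\,n_t^{-1}\,n.
\]
This equals $C|_{V_t}\,n$, because $n$ maps $V_s$ into $V_t$. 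Therefore $C^{-1}\varphi(n)C|_L=n|_L$.

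\emph{Moreover clause.} Suppose $\varphi(n)=n$. The intertwining identity then reads $nC=Cn$ on $L$. On $L'$ both $nC$ and $Cn$ equal $n$, since $C$ is the identity there and $n$ preserves $L'$. Hence $C$ commutes with $n$ on all of $L_R$.
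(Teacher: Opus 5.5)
Your construction $C=p_{L'}+\sum_s\varphi(n_s)n_s^{-1}p_s$ and your intertwining computation are exactly the paper's proof. The point you flag as ``the main subtlety'' is a non-issue, however, and you should not add it as a hypothesis. Nothing in your argument needs $\varphi(n)$ to permute the $V_s$, and the paper remarks explicitly that the lemma does not assume this.

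For $v\in V_s$ you have $Cv=\varphi(n_s)n_s^{-1}v$ simply because $p_sv=v$ and the other projections kill $v$; it does not matter where this vector lands. The computation then uses only two facts:
\begin{itemize}
\item $n_s^{-1}v\in V_0$, so that $\varphi(a)n_s^{-1}v=an_s^{-1}v$;
\item $nv\in V_t$, so that $\varphi(n_t)n_t^{-1}nv=C(nv)$.
\end{itemize}
Likewise, $Q$-symmetry, the congruence $C\equiv I_{78}$, invertibility, commutation with $\mathcal D$, and the final clause never use that $C$ preserves the individual $V_s$. At most one notes $C(L)\subseteq L$, and this follows from $\varphi(\mathcal N)$ preserving $L$.

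Under the stated hypotheses the $V_s$ are only $\mathcal D$-stable; they need not be unions of $\mathcal D$-eigenspaces. So your block-permutation claim is not available in general, and adding it as a hypothesis would weaken the lemma. Delete that paragraph, together with the assertion that $C|_{V_s}$ maps $V_s$ to itself. What remains is a complete proof of the lemma as stated.
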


\begin{proof}
Choose $n_s\in\mathcal N$ with $n_sV_0=V_s$ and $n_{s_0}=1$.
If $p'$ and $p_s$ are the projections onto $L'$ and $V_s$, set
\[
 C=p'+\sum_{s\in\mathcal S}\varphi(n_s)n_s^{-1}p_s.
\]
The preceding observations show that $C$ is $Q$-symmetric,
commutes with $\mathcal D$, and satisfies
\[
 C|_{L'}=I,\qquad C(L)\subseteq L,\qquad
 C\equiv p'+\sum_s p_s=I_{78}\pmod{M_{78}(J)}.
\]
In particular, $C$ is invertible.

If $nV_s=V_t$, then $a=n_t^{-1}nn_s\in\mathcal N_0$.
For $v\in V_s$, the vector $n_s^{-1}v$ lies in $V_0$, so
\[
\begin{aligned}
 \varphi(n)Cv
   &=\varphi(n_t)\varphi(a)n_s^{-1}v\\
   &=\varphi(n_t)a n_s^{-1}v\\
   &=\varphi(n_t)n_t^{-1}nv=Cnv.
\end{aligned}
\]
This proves the intertwining on $L$.
When $\varphi(n)=n$, it also proves $nC=Cn$ there.
The same holds on $L'$, since $n$ preserves $L'$ and
$C|_{L'}=I$.
\end{proof}

The lemma does not assume that $\varphi(\mathcal N)$ preserves
or permutes the individual summands $V_s$.

We also use repeatedly the implication
\[
 A\equiv I_n\pmod{M_n(J)},\qquad A^2=I_n
 \quad\Longrightarrow\quad A=I_n.
\]
Indeed, $A+I_n\equiv2I_n\pmod{M_n(J)}$ is invertible and
$(A-I_n)(A+I_n)=0$.

\subsection{The Cartan part}
\label{subsec:weyl-cartan-part}

We first determine the action of $N_B$ on $H$, then normalize
$w_{\beta_0}$ on its two-dimensional $Q$-negative part.

Use the basis
\[
\begin{aligned}
 q_1&=\tfrac12(h_{e_1-e_2}+h_{e_1+e_2}),&
 q_2&=\tfrac12(h_{e_1+e_2}-h_{e_1-e_2}),\\
 q_3&=\tfrac12(h_{e_3-e_4}+h_{e_3+e_4}),&
 q_4&=\tfrac12(h_{e_3+e_4}-h_{e_3-e_4}),\\
 z_+&=2(h_1-h_6)+(h_2-h_5),&
 z_-&=h_2-h_5.
\end{aligned}
\]
Its determinant in the Chevalley basis is $4$. Thus
\[
 H=H^+\oplus H^-,\qquad
 H^+=\langle q_1,q_2,q_3,q_4\rangle_R,\qquad
 H^-=\langle z_+,z_-\rangle_R,
\]
where $H^\pm=H\cap\ker(Q\mp I)$.
The fixed element $w_{e_i}$ changes the signs of $q_i,z_-$
and fixes the other basis vectors.
The six resulting simultaneous characters are distinct.

Put $T=W_{e_2-e_3}|_H$ and $S=w_{e_2-e_3}|_H$.
On $H$, conjugation by $T$ interchanges the second and third
coordinate reflections and fixes the other two.
Since $T^2=I$ and $T\equiv S\pmod J$, the character decomposition
gives
\[
 Tq_2=pq_3,\qquad Tq_3=p^{-1}q_2,\qquad
 Tq_1=q_1,\quad Tq_4=q_4,\quad Tz_\pm=z_\pm
\]
for some $p\in1+J$.
The Weyl image of $w_{e_2-e_3}gw_{e_2-e_3}^{-1}$ is
$(1\,2)(3\,4)$, so this element belongs to the fixed group
$\mathcal P$.
Writing $G_H=g|_H$, we obtain
\[
 TG_HT=SG_HS.
\]
The two sides send $q_1$ to $p^{-1}q_2$ and $q_2$, respectively.
Hence $p=1$ and $T=S$.
Since $W_0$ and $s_{e_2-e_3}$ generate $W_B$, every element of
$N_B$ now acts standardly on $H$.

Next put $T=W_{\beta_0}|_H$ and $b_H=w_{\beta_0}|_H$.
The long-root Weyl group $W(D_4)$ is normal in $W(F_4)$,
and its inverse image in $N_\rho$ lies in $N_B$.
Since these representatives already act standardly on $H$,
the matrix $b_H^{-1}T$ centralizes their action.
On $H^+$ this is the even signed-permutation action in the
$q_i$-basis; on $H^-$ it is trivial.
The even sign changes separate the four $q_i$-lines from each
other and from $H^-$, and coordinate permutations make their
four scalars equal.
Thus the centralizer consists of
\[
 aI_{H^+}\oplus A,\qquad a\in R,\quad A\in M_2(R).
\]
In particular, $T|_{H^+}=a b_H|_{H^+}$ for $a\in1+J$.
The equality $T^2=I$ gives $a^2=1$, hence $a=1$.

On $H^-$, in the basis $(z_+,z_-)$, the standard restrictions are
\[
 b_H|_{H^-}
   =\frac12\begin{pmatrix}-1&1\\3&1\end{pmatrix},
 \qquad
 S_-:=w_{e_4}|_{H^-}
   =\begin{pmatrix}1&0\\0&-1\end{pmatrix}.
\]
Write
\[
 T_-:=T|_{H^-}=\begin{pmatrix}r&s\\t&u\end{pmatrix}.
\]
Its entries lie in $R_0$ by $Q$-symmetry, and
$s\in\tfrac12+J$ is a unit.
The $(1,2)$-entries of the square and simple-root braid relations
\[
 T_-^2=I,\qquad T_-S_-T_-=S_-T_-S_-
\]
give $r+u=0$ and $r-u+1=0$.
Hence $r=-\tfrac12$, $u=\tfrac12$, and $st=\tfrac34$, so
\[
 T_-=\frac12
 \begin{pmatrix}
 -1&\zeta\\
 3\zeta^{-1}&1
 \end{pmatrix},
 \qquad \zeta=2s\in(1+J)\cap R_0.
\]
Let $K_H$ multiply $z_-$ by $\zeta^{-1}$ and fix the other
five Cartan basis vectors. Then
\[
 K_H^{-1}TK_H=b_H.
\]
Every element of $N_B$ acts on $H^-$ as $I$ or $S_-$, so
$K_H$ commutes with its action on $H$.
Extend $K_H$ by the identity on $L_\ell\oplus L_s$.
This $Q$-symmetric matrix is congruent to the identity.
Replacing $\varphi$ by $i_{K_H^{-1}}\circ\varphi$ preserves
the fixed group $\mathcal P$ and makes $w_{\beta_0}$ standard
on $H$.
Since $N_B$ and $w_{\beta_0}$ generate $N_\rho$, we now have
\[
 \varphi(n)|_H=n|_H\qquad(n\in N_\rho).
\]
No cancellation of $3$ was used.

\subsection{The short-root sector}
\label{subsec:short-root-weyl-part}
\label{subsubsec:coordinate-short-blocks}
\label{subsec:transport-short-weyl}

It suffices to determine the stabilizer action on
$V_0=E_1=V_{\pm e_1}$ and then apply transport.
Its diagonal character is fixed by $s_{e_2-e_3}$, so
$W_{e_2-e_3}$ preserves $V_0$.
The standard root-string action gives
\[
 w_{e_2-e_3}|_{V_0}=I,\qquad
 h_{e_2-e_3}(-1)|_{V_0}=I.
\]
Thus $W_{e_2-e_3}|_{V_0}$ is an involution congruent to the
identity, and also equals $I$.

The stabilizer of $\{\pm e_1\}$ in $W(F_4)$ is generated by
\[
 s_{e_1},\qquad s_{e_2-e_3},\qquad
 s_{e_3-e_4},\qquad s_{e_4};
\]
the last three generate the $B_3$-subsystem orthogonal to $e_1$.
Its inverse image $N_0$ is generated by the corresponding
representatives and $\mathcal D$.
Apart from $w_{e_2-e_3}$, whose restriction was just determined,
all these generators belong to $\mathcal P$. Hence
\[
 \varphi(a)|_{V_0}=a|_{V_0}\qquad(a\in N_0).
\]

Apply Lemma~\ref{lemma:weyl-transport} with $\mathcal N=N_\rho$,
$L=L_s$, and the twelve short-root pair spaces.
The resulting change $C_s$ is the identity on $H\oplus L_\ell$
and commutes with $\mathcal P$.
After replacing $\varphi$ by $i_{C_s^{-1}}\circ\varphi$,
\[
 \varphi(n)|_{H\oplus L_s}=n|_{H\oplus L_s}
 \qquad(n\in N_\rho).
\]

\subsection{The long-root sector}
\label{subsec:long-root-weyl-part}

Only the action on $L_\ell$ remains.
We first normalize $N_B$ by transport between $U_0,U_1,U_2$,
then treat the additional generator $w_{\beta_0}$.

\subsubsection{The signed-permutation subgroup}
\label{subsubsec:long-weyl-transport}

The group $N_B$ acts transitively on $U_0,U_1,U_2$.
The stabilizer of $U_0$ is precisely $\mathcal P$, since its Weyl
image must preserve the partition $\{\{1,2\},\{3,4\}\}$.
Apply Lemma~\ref{lemma:weyl-transport} with $\mathcal N=N_B$,
$L=L_\ell$, and these three summands, using
\[
 n_0=1,\qquad n_1=w_{e_2-e_3},\qquad
 n_2=w_{e_3-e_4}w_{e_2-e_3},\qquad n_iU_0=U_i.
\]
The change is the identity on $H\oplus L_s$ and preserves
$\mathcal P$. Consequently,
\[
 \varphi(n)=n\qquad(n\in N_B)
\]
on the whole module.

\subsubsection{The half-sum representative}
\label{subsubsec:half-sum-weyl}

We determine $\widehat w:=W_{\beta_0}$ on $U_0$ and transport
the correction to $U_1,U_2$.
Put $w=w_{\beta_0}$.
The reflection $s_{\beta_0}$ fixes the three long-root characters,
so $\widehat w$ preserves each $U_i$.
Moreover, $\widehat w^2=I$ on $L_\ell$, because
$h_{\beta_0}(-1)|_{L_\ell}=I$.

Set
\[
\begin{gathered}
 \lambda_1=e_1-e_2,\quad \lambda_2=e_1+e_2,\quad
 \lambda_3=e_3-e_4,\quad \lambda_4=e_3+e_4,\\
 b_0=w|_{U_0},\qquad C_i=w_{e_i}|_{U_0},\qquad
 R_i=w_{\lambda_i}|_{U_0},\qquad G=g|_{U_0}.
\end{gathered}
\]
Define
\[
 u_1=e_{\lambda_1}-e_{-\lambda_1},\qquad
 v_1=e_{\lambda_1}+e_{-\lambda_1},
\]
and
\[
\begin{aligned}
 u_2&=C_2u_1,&u_3&=Gu_1,&u_4&=b_0u_1,\\
 v_2&=C_2v_1,&v_3&=Gv_1,&v_4&=b_0v_1.
\end{aligned}
\]
Each pair $(u_i,v_i)$ has change-of-basis determinant $\pm2$
in its root pair. Thus
\[
 \mathcal B_0=(u_1,u_2,u_3,u_4;v_1,v_2,v_3,v_4)
\]
is a basis of $U_0$.
The root-string formulas give the following columnwise action:
\[
\begin{array}{c|rrrr|rrrr}
 &u_1&u_2&u_3&u_4&v_1&v_2&v_3&v_4\\ \hline
 C_1&u_2&u_1&u_3&u_4&-v_2&-v_1&v_3&v_4\\
 C_2&u_2&u_1&u_3&u_4& v_2& v_1&v_3&v_4\\
 C_3&u_1&u_2&u_4&u_3&v_1&v_2&-v_4&-v_3\\
 C_4&u_1&u_2&u_4&u_3&v_1&v_2& v_4& v_3\\
 b_0&u_4&u_2&u_3&u_1&v_4&v_2&v_3&v_1\\
 G&u_3&u_4&u_1&u_2&v_3&v_4&v_1&v_2.
\end{array}
\]
The matrix $R_i$ changes the sign of $v_i$ and fixes the other
seven basis vectors.
Put
\[
 U_0^+=\langle u_1,u_2,u_3,u_4\rangle_R,\qquad
 U_0^-=\langle v_1,v_2,v_3,v_4\rangle_R.
\]
These superscripts do not refer to $Q$, which is the identity
on all of $U_0$.

Let $T=\widehat w|_{U_0}$.
The relations
\[
 TR_1=R_4T,\qquad TR_4=R_1T,\qquad
 TR_2=R_2T,\qquad TR_3=R_3T
\]
separate $U_0^+$ from the four $v$-lines.
Together with $T^2=I$ and $T\equiv b_0\pmod J$, they give
\[
\begin{gathered}
 T=T_+\oplus T_-,\\
 T_-v_1=pv_4,\qquad T_-v_4=p^{-1}v_1,\qquad
 T_-v_2=v_2,\qquad T_-v_3=v_3,\qquad p\in1+J.
\end{gathered}
\]
The Weyl image of $wgw^{-1}$ is
$\operatorname{diag}(1,-1,1,-1)$, so $wgw^{-1}\in N_B$ is fixed.
The action table therefore gives
\[
 TGT=b_0Gb_0=K,\qquad K:=C_2C_4.
\]
Applied to $v_1$, this identity gives $pv_2=v_2$.
Hence $p=1$ and $T_-=b_0|_{U_0^-}$.

On $U_0^+$ use the basis
\[
\begin{aligned}
 f_0&=u_1+u_2+u_3+u_4,&
 f_1&=u_1+u_2-u_3-u_4,\\
 f_2&=u_1-u_2+u_3-u_4,&
 f_3&=u_1-u_2-u_3+u_4.
\end{aligned}
\]
The inverse change has coefficients in $\tfrac14\mathbb Z$.
The commuting involutions $K,G$ have four distinct characters:
\[
\begin{array}{c|rrrr}
 &f_0&f_1&f_2&f_3\\ \hline
 K&1&1&-1&-1\\
 G&1&-1&1&-1.
\end{array}
\]
The standard matrix $b_0$ fixes $f_0,f_3$ and interchanges
$f_1,f_2$ with minus signs.
Since $TGT=K$ and $T^2=I$, also $TKT=G$.
Thus $T_+$ preserves $Rf_0,Rf_3$ and interchanges $Rf_1,Rf_2$.
Its square and residue give
\[
 T_+f_0=f_0,\qquad T_+f_3=f_3,\qquad
 T_+f_1=-\nu f_2,\qquad T_+f_2=-\nu^{-1}f_1
\]
for a unique $\nu\in1+J$.
Since $Q|_{U_0}=I$, $Q$-symmetry gives $\nu\in R_0$.

Let $Z_0(\nu)$ multiply $f_1$ by $\nu$ and fix
$f_0,f_2,f_3$ and $U_0^-$.
Then
\[
 T=Z_0(\nu)^{-1}b_0Z_0(\nu).
\]
The action table shows that $Z_0(\nu)$ commutes with every
$C_i,R_i,G$.
Their representatives, together with $\mathcal D$, generate
the stabilizer $\mathcal P$ of $U_0$ in $N_B$.
Since $\mathcal D$ acts by scalar signs on $U_0$, $Z_0(\nu)$
commutes with the entire stabilizer action.

Using the representatives $n_i$ chosen above, define
\[
 Z_\ell(\nu)|_{U_i}=n_iZ_0(\nu)n_i^{-1},\qquad
 Z_\ell(\nu)|_{H\oplus L_s}=I.
\]
If $nU_i=U_j$ for $n\in N_B$, then
$n_j^{-1}nn_i\in\mathcal P$.
Commutation with this stabilizer element gives
\[
 nZ_\ell(\nu)=Z_\ell(\nu)n.
\]
The matrix $Z_\ell(\nu)$ is $Q$-symmetric and congruent to the
identity.

Finally, $n_1,n_2$ commute with $w$: their defining long roots
are orthogonal to $\beta_0$.
Since these representatives are fixed, they also commute with
$\widehat w$.
Transporting the identity on $U_0$ gives
\[
 Z_\ell(\nu)\widehat w Z_\ell(\nu)^{-1}=w.
\]
Replacing $\varphi$ by $i_{Z_\ell(\nu)}\circ\varphi$ preserves
all of $N_B$ and the standard action on $H\oplus L_s$.
As $N_B$ and $w_{\beta_0}$ generate $N_\rho$, the normalization
is complete.

\begin{proposition}[Weyl normalization]
\label{prop:weyl-normalization}
After a $Q$-symmetric basis change congruent to $I_{78}$ modulo
$M_{78}(J)$, all relative Weyl representatives and diagonal
involutions are fixed:
\[
 \varphi(w_\lambda(1))=w_\lambda(1),\qquad
 \varphi(h_\lambda(-1))=h_\lambda(-1)
 \qquad(\lambda\in\Phi_\rho).
\]
\end{proposition}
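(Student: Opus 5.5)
The proposition collects the normalizations of Section~3, so the plan is to compose the basis changes made there and check that each step keeps what earlier steps achieved. Every change is $Q$-symmetric and congruent to $I_{78}$ modulo $M_{78}(J)$, so the composite $C$ is too.

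\emph{Step 1: initial averaging.} Apply Lemma~\ref{lemma:finite-weyl-averaging} to the $2$-group $\mathcal P=\pi^{-1}(W_0)$. Its order is a power of $2$, hence a unit. This fixes $\mathcal D$, the elements $w_{e_i}$, $w_{e_1\pm e_2}$, $w_{e_3\pm e_4}$, and $g$. Because $\mathcal D$ is fixed, every later correction of the form $\varphi(n)n^{-1}$ commutes with $\mathcal D$ and respects the decomposition $L_R=H\oplus L_\ell\oplus L_s$ of Subsection~\ref{subsec:weyl-diagonal-decompositions}.

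\emph{Step 2: Cartan part.} Work sector by sector. On $H$, the diagonal characters together with the relation $TG_HT=SG_HS$ force $W_{e_2-e_3}|_H$ to be standard. Hence all of $N_B$ is standard on $H$. The centralizer computation and the braid relations with $S_-$ then show that $W_{\beta_0}|_H$ is conjugate to the standard action by the diagonal matrix $K_H$. This matrix commutes with the action of $N_B$ on $H$, so $N_B$ stays normalized.

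\emph{Step 3: short-root sector.} On $L_s$, the stabilizer $N_0$ of $E_1$ already acts standardly: all its generators lie in $\mathcal P$ except $w_{e_2-e_3}$, whose restriction to $E_1$ is an involution congruent to $I$ and is therefore $I$. Lemma~\ref{lemma:weyl-transport} then gives a correction $C_s$ that is the identity on $H\oplus L_\ell$.

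\emph{Step 4: long-root sector.} On $L_\ell$, transport between $U_0,U_1,U_2$ normalizes $N_B$, with stabilizer $\mathcal P$. It remains to handle $\widehat w=W_{\beta_0}$ on $U_0$. The table of $C_i,R_i,G$, combined with the conditions $\widehat w^2=I$ and $TGT=K$, reduces $\widehat w$ to the standard action up to the single parameter $\nu\in(1+J)\cap R_0$. Conjugating by $Z_\ell(\nu)$ removes this parameter.

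I expect the main obstacle to be this last step. One must check that $Z_\ell(\nu)$ is well defined on all three $U_i$ and commutes with all of $N_B$, which uses that $Z_0(\nu)$ centralizes the full stabilizer $\mathcal P$. One must also check that transport through $n_1,n_2$, which are orthogonal to $\beta_0$, yields $Z_\ell\widehat wZ_\ell^{-1}=w$ on every $U_i$ and not only on $U_0$.

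\emph{Conclusion.} After these four steps, $N_B$ and $w_{\beta_0}$ are fixed on all of $L_R$. Since they generate $N_\rho$, every $w_\lambda(1)$ is fixed. Each $h_\lambda(-1)=w_\lambda(1)^2$ is then fixed as well, which proves the proposition.
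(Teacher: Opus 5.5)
Your proposal is correct and follows the paper's proof step for step: averaging over $\mathcal P=\pi^{-1}(W_0)$, normalizing the Cartan part via $TG_HT=SG_HS$ and the $H^-$ braid computation with $K_H$, transporting on $L_s$ from the stabilizer of $E_1$, transporting on $L_\ell$ to fix $N_B$, and finally correcting $w_{\beta_0}$ by $Z_\ell(\nu)$, transported through $n_1,n_2$. One wording point: $\mathcal D$ acts trivially on $H$, so $\mathcal D$-characters cannot be what determines $W_{e_2-e_3}|_H$ up to the scalar $p$; that is done by the simultaneous characters of the fixed reflections $w_{e_i}|_H$ on the basis $q_1,\dots,q_4,z_\pm$, together with $T^2=I$ and $T\equiv S\pmod J$.
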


The construction applies to any homomorphism
$N_\rho\to\operatorname{GL}_{78}(R)^\sigma$
whose reduction is the standard representation:
only the group relations and matrix congruences were used.

\bigskip


\section{Normalization of the constant root elements}
\label{sec:root-unipotents}

We now fix the constant root elements without changing the Weyl
normalization. Seven linear equations determine a change of basis
for one complete short-root matrix; an invertible Jacobian minor
then proves the full conjugacy. Weyl conjugation gives the other
constant short-root elements, and short--short commutators determine
the constant long-root elements from their squares.

Retain $\varphi$ from Proposition~\ref{prop:weyl-normalization}
and write
\[
 w_\lambda:=w_\lambda(1),\qquad
 X_\lambda(t):=\varphi(x_\lambda(t)),\qquad
 X_\lambda:=X_\lambda(1).
\]
Thus $\varphi$ fixes $N_\rho$, all its values are $Q$-symmetric,
and
\[
 X_\lambda\equiv x_\lambda(1)\pmod{M_{78}(J)}.
\]

\subsection{A family of allowed basis changes}
\label{subsec:root-centralizer}

We construct a seven-parameter family centralizing $N_\rho$.
Its invertible members preserve all preceding normalizations.

Retain $L_R=H\oplus L_\ell\oplus L_s$, $H=H^+\oplus H^-$,
and the signed pair bases of
Lemma~\ref{lemma:signed-transition-blocks}. Put
\[
 C_H(r,s)=rI_{H^+}\oplus sI_{H^-}.
\]
Let $C_\ell(B_\ell)$ denote the repeated block $B_\ell$ on all
long-root pairs, and $C_s(B_s)$ the repeated block $B_s$ on all
short-root pairs. They commute with the Weyl action whenever
\[
 B_\ell P=PB_\ell,\qquad
 B_sP_4=P_4B_s,\qquad B_sQ_4=Q_4B_s.
\]

One additional endomorphism mixes long-root pairs within each
$U_i$. Use the basis $\mathcal B_0$ of $U_0$ from
Subsubsection~\ref{subsubsec:half-sum-weyl} and set
\[
 \Pi_0u_i=\frac14\sum_{j=1}^4u_j,\qquad
 \Pi_0v_i=0\qquad(1\leq i\leq4).
\]
Using the transporters
\[
 n_0=1,\qquad n_1=w_{e_2-e_3},\qquad
 n_2=w_{e_3-e_4}w_{e_2-e_3},
\]
define
\[
 \Pi|_{U_i}=n_i\Pi_0n_i^{-1}\quad(0\leq i\leq2),\qquad
 \Pi|_{H\oplus L_s}=0.
\]
The action table in Subsubsection~\ref{subsubsec:half-sum-weyl}
shows that $\Pi_0$ commutes with the stabilizer of $U_0$ in $N_B$
and with $w_{\beta_0}|_{U_0}$.
Transport gives commutation of $\Pi$ with $N_B$ and, since
$n_1,n_2$ commute with $w_{\beta_0}$, with all of $N_\rho$.
Moreover,
\[
 \Pi^2=\Pi,\qquad \operatorname{rank}\Pi=3,\qquad \Pi Q=Q\Pi,
\]
and $\Pi$ is defined over $\mathbb Z[1/2]$.

For $\mathbf t=(t_1,\ldots,t_7)\in R^7$, put
\[
 B_\ell=(1+t_2)I_2+t_3P,\qquad
 B_s^+=(1+t_4)I_2+t_5P,\qquad
 B_s^-=I_2+t_6P.
\]
Here $B_s$ has blocks $B_s^+,B_s^-$ in the $Q_4$-eigenbasis
\[
 \bigl(e_\beta+e_{\tau\beta},\,-e_{-\beta}-e_{-\tau\beta};\,
       e_\beta-e_{\tau\beta},\,-e_{-\beta}+e_{-\tau\beta}\bigr).
\]
Define
\begin{equation}
\label{eq:root-affine-centralizer}
\begin{aligned}
 C(\mathbf t)
   &=\bigl(C_H(1,1+t_1)\oplus C_\ell(B_\ell)
                          \oplus C_s(B_s)\bigr)+t_7\Pi\\
   &=I_{78}+\sum_{j=1}^7t_jT_j,
 \qquad T_j\in M_{78}(\mathbb Z[1/2]).
\end{aligned}
\end{equation}
The affine dependence on $\mathbf t$ will make the equations for
the change of basis linear.

\begin{lemma}[Allowed changes of basis]
\label{lemma:centralizer-subgroup}
Every $C(\mathbf t)$ commutes with $N_\rho$ and with $Q$.
For $\mathbf t\in J^7$, it is invertible and congruent to $I_{78}$
modulo $M_{78}(J)$. For $\mathbf t\in(J\cap R_0)^7$, it is also
$Q$-symmetric.
\end{lemma}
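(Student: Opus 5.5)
The proof splits into three checks: commutation with $N_\rho$, commutation with $Q$, and the statements about $\mathbf t\in J^7$ and $\mathbf t\in(J\cap R_0)^7$. Every $T_j$ is defined over $\mathbb Z[1/2]$, and the commutation identities are polynomial identities in $\mathbf t$ with integral coefficients. So it suffices to prove them for the individual summands $C_H(1,1+t_1)$, $C_\ell(B_\ell)$, $C_s(B_s)$ and $\Pi$, in the universal setting, and then base change to $R$.

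\emph{Commutation with $N_\rho$.} For the Cartan summand, recall from Subsection~\ref{subsec:weyl-cartan-part} that $H^\pm=H\cap\ker(Q\mp I)$. Every element of $N_\rho$ commutes with $Q$, so it preserves $H^+$ and $H^-$. A block-scalar matrix $rI_{H^+}\oplus sI_{H^-}$ therefore commutes with each $S_i$.

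For the long-root summand, $B_\ell=(1+t_2)I_2+t_3P$ visibly commutes with $P$. Lemma~\ref{lemma:signed-transition-blocks}(3) then gives commutation of $C_\ell(B_\ell)$ with $N_\rho$ on $L_\ell$.

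For the short-root summand, we work in the stated $Q_4$-eigenbasis. There $Q_4$ acts as $I_2\oplus(-I_2)$ and $P_4$ acts as $P\oplus P$. A block-diagonal matrix $B_s^+\oplus B_s^-$ with each block in $R[P]$ commutes with both. This requires $P_4$ to preserve the $\pm$ eigenspaces of $Q_4$, which holds because $P_4$ and $Q_4$ commute. Lemma~\ref{lemma:signed-transition-blocks}(3) again applies.

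For $\Pi$, commutation with $N_\rho$ was already established in the construction: $\Pi_0$ commutes with the stabilizer of $U_0$ and with $w_{\beta_0}|_{U_0}$, and the transport argument of Lemma~\ref{lemma:weyl-transport} then gives commutation with all of $N_\rho$.

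\emph{Commutation with $Q$.} $Q$ preserves $H^\pm$ and acts on them by $\pm1$, so it commutes with $C_H$. On a long-root pair, $Q$ is the identity, since long classes are $\tau$-fixed singletons and $Qe_\alpha=e_{\tau\alpha}$. On a short pair, $Q$ acts as $Q_4$ in $\mathcal B_\mu$, so commutation with $C_s(B_s)$ follows from $B_sQ_4=Q_4B_s$. Finally, $\Pi Q=Q\Pi$ was recorded in the construction. All the $T_j$ are integral over $\mathbb Z[1/2]$ and $Q$ is a fixed $\tau$-permutation matrix, so these commutations hold over every $R$.

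\emph{The statements over $J$ and $J\cap R_0$.} For $\mathbf t\in J^7$, we have $C(\mathbf t)-I_{78}=\sum t_jT_j\in M_{78}(J)$, since the $T_j$ have entries in $\mathbb Z[1/2]\subseteq R$. A matrix congruent to $I$ modulo the Jacobson radical has unit determinant, so $C(\mathbf t)$ is invertible.

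For $\mathbf t\in(J\cap R_0)^7$, the $Q$-symmetry condition reads $Q\,\overline{C(\mathbf t)}\,Q^{-1}=C(\mathbf t)$. Since $\overline{T_j}=T_j$ and $\overline{t_j}=t_j$, the left side equals $QC(\mathbf t)Q^{-1}$. This equals $C(\mathbf t)$ by the previous step.

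The only step needing real care is the short-root commutation. One must confirm that in the $Q_4$-eigenbasis the matrix $P_4$ is block-diagonal as $P\oplus P$, with no sign twist, so that $B_s^+\oplus B_s^-$ commutes with it. This is checked directly: $P_4$ interchanges $e_\beta\leftrightarrow -e_{-\beta}$ and $e_{\tau\beta}\leftrightarrow -e_{-\tau\beta}$ simultaneously, so it maps sums to sums and differences to differences.
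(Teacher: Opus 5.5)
Your proposal is correct and follows the paper's own route; the paper's proof is simply a compressed version of yours. Both arguments use the repeated blocks via Lemma~\ref{lemma:signed-transition-blocks}(3), the Cartan block being scalar on the Weyl-invariant summands $H^\pm$, $\Pi$ by its construction, and the last two assertions from congruence to the identity and $\overline{\mathbf t}=\mathbf t$. One small citation point: commutation of $\Pi$ with $N_\rho$ comes from the explicit transport by $n_1,n_2$ together with $n_iw_{\beta_0}=w_{\beta_0}n_i$, not from Lemma~\ref{lemma:weyl-transport} itself, but this does not affect the argument.
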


\begin{proof}
The repeated root-pair blocks commute with $N_\rho$ by
Lemma~\ref{lemma:signed-transition-blocks}; the Cartan block is
scalar on each Weyl-invariant summand $H^\pm$, and commutation
for $\Pi$ was proved above.
All these endomorphisms commute with $Q$.
The remaining assertions follow from the congruence to the identity
and, for $Q$-symmetry, from $\overline{\mathbf t}=\mathbf t$.
\end{proof}

\subsection{The initial diagonal decompositions}
\label{subsec:initial-root-decompositions}

Diagonal commutation determines the blocks in which the unknown
root matrices can have nonzero entries. For every admissible
parameter,
\[
 h_\mu(-1)X_\gamma(t)h_\mu(-1)^{-1}
   =X_\gamma(t)^{(-1)^{\langle\gamma,\mu^\vee\rangle}}.
\]
Put $H_i=h_{[\alpha_i]}(-1)$.
The matrix $X_{e_4}$ commutes with $H_2,H_4,H_1H_3$.
By the character table in
Subsection~\ref{subsec:weyl-diagonal-decompositions}, their
simultaneous eigenspaces are
\begin{equation}
\label{eq:short-initial-blocks}
\begin{gathered}
 H\oplus V_{\pm e_4},\qquad
 U_0\oplus V_{\pm e_3},\qquad
 U_1\oplus V_{\pm e_2},\qquad
 U_2\oplus V_{\pm e_1},\\
 T_{\varepsilon_2\varepsilon_3,+}
   \oplus T_{\varepsilon_2\varepsilon_3,-}
 \qquad(\varepsilon_2,\varepsilon_3\in\{\pm1\}).
\end{gathered}
\end{equation}
The block ranks are $10,12,12,12,8,8,8,8$, leaving
\[
 10^2+3\cdot12^2+4\cdot8^2=788
\]
matrix coordinates. No finer block decomposition is assumed.

For the root-recognition argument in Section~6, we also record
the corresponding decomposition for the long root $e_2-e_3$:
\[
\begin{gathered}
 H\oplus L_{23}\oplus L_{14},\qquad
 L_{12}\oplus L_{13}\oplus L_{24}\oplus L_{34},\\
 V_{\pm e_1}\oplus V_{\pm e_4},\qquad
 V_{\pm e_2}\oplus V_{\pm e_3},\\
 T_{---}\oplus T_{++-},\qquad T_{+-+}\oplus T_{-++},\qquad
 T_{+++}\oplus T_{--+},\qquad T_{+--}\oplus T_{-+-}.
\end{gathered}
\]
Its block ranks are $14,16,8,8,8,8,8,8$, giving $836$ coordinates.
Both decompositions apply to arbitrary root parameters; Weyl
transport gives those for the other roots.

\subsection{Normalization of one short-root matrix}
\label{subsection:short-sector-normalization}

We describe the relations that determine $X_{e_4}$ up to a unique
change of basis from~\eqref{eq:root-affine-centralizer}. Put
\[
 X=X_{e_4},\qquad X^0=x_{e_4}(1),\qquad
 w_4=w_{e_4},\qquad w_0=w_{\beta_0}.
\]
Besides~\eqref{eq:short-initial-blocks}, the matrix $X$ satisfies
\begin{equation}
\label{eq:short-normalization-relations}
\begin{aligned}
 XH_iX&=H_i &&(i=1,3),\\
 Xw_\rho&=w_\rho X
   &&\bigl(\rho=e_i\pm e_j,\ 1\leq i<j\leq3\bigr),\\
 Xw_{e_i}X&=w_{e_i} &&(i=1,2,3),\\
 (w_4X)^3&=I_{78}.&&
\end{aligned}
\end{equation}
The six displayed long-root representatives commute with
$x_{e_4}(1)$, the three coordinate short-root representatives
invert it, and the last equality is the rank-one relation.

Define
\[
 V(X)=w_0Xw_0^{-1},\qquad
 U(X)=(w_4w_0)X(w_4w_0)^{-1}.
\]
For the actual image $X$, the sign conventions give
$V(X)=X_{\beta_0+e_4}$ and $U(X)=X_{\beta_0}$.
The short $A_2$ relations are therefore
\begin{equation}
\label{eq:short-A2-normalization}
 XV=VX,\qquad UV=VU,\qquad XUV=UX,
 \qquad V=V(X),\quad U=U(X).
\end{equation}
Using the first two relations, the last is equivalent to
$[X,U]=V^{-1}$.

Let $F(X)=0$ be the entrywise polynomial system
\eqref{eq:short-normalization-relations}--%
\eqref{eq:short-A2-normalization} in the $788$ allowed coordinates.
Its coefficients are integral, and both $X$ and $X^0$ satisfy it.
Only group relations have been used, not preservation of the
Chevalley bracket.

\begin{lemma}[Normalization of the complete short-root matrix]
\label{lemma:short-sector-rigidity}
\label{lemma:full-short-root-normalization}
Let $X\in\operatorname{GL}_{78}(R)$ have the block form
\eqref{eq:short-initial-blocks}, satisfy $F(X)=0$, and be congruent
to $X^0$ modulo $M_{78}(J)$.
There is a unique $\mathbf t\in J^7$ such that
\[
 C(\mathbf t)X=X^0C(\mathbf t).
\]
If $X$ is $Q$-symmetric, then $\mathbf t\in(J\cap R_0)^7$.
\end{lemma}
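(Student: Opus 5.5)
We treat the statement as a deformation problem near the reduction. Write $X=X^0+Y$ with $Y\in M_{78}(J)$ supported in the allowed $788$ coordinates of \eqref{eq:short-initial-blocks}. We look for $\mathbf t\in J^7$ solving the linear system
\[
 C(\mathbf t)X-X^0C(\mathbf t)=0 .
\]
By \eqref{eq:root-affine-centralizer} this reads $(X-X^0)+\sum_j t_j(T_jX-X^0T_j)=0$, a system affine in $\mathbf t$ with integral coefficients over $\mathbb Z[1/2]$. The $T_j$ commute with $N_\rho$ and with the diagonal involutions, so the left side again lies in the allowed block pattern. The strategy is the standard one of a smooth orbit map. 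Consider the morphism over $\mathbb Z[1/2]$ (in fact over $\mathbb Z[1/6]$ first, then checked with $3$ kept)
\[
 \Psi:\ \mathbf t\longmapsto C(\mathbf t)^{-1}X^0C(\mathbf t)
\]
into the scheme $F=0$. We would show that $\Psi$ is étale at $\mathbf t=0$ onto the solution scheme of $F$ near $X^0$, in the following sense. First, the tangent map $\mathbf t\mapsto\sum_j t_j[X^0,T_j]$ is injective modulo every maximal ideal, since a $7\times7$ minor of its Jacobian is a unit in $\mathbb Z[1/2]$; this is the ``invertible Jacobian minor'' certificate. Second, the Zariski tangent space of $F=0$ at $X^0$, i.e.\ the kernel of the linearization $dF_{X^0}$ on the $788$ coordinates, has rank exactly $7$ over every residue field $k$ with $\charr k\neq2$.

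The second point is the main obstacle, and it is where the computer certificate enters. Linearize the relations \eqref{eq:short-normalization-relations}--\eqref{eq:short-A2-normalization} at $X^0$. This gives an integral matrix $A$ with $788$ columns, and we must show that $\ker(A\otimes k)$ is spanned by the images of $[X^0,T_j]$ for every such $k$. The plan is to exhibit a $781\times781$ minor of $A$, complementary to the $7$ pivot coordinates chosen from the Jacobian minor, with determinant a power of $2$ up to sign. In particular this determinant must not be divisible by $3$, so that residue characteristic three is covered. Elimination is then done over $\mathbb Z[1/2]$ exactly, which guarantees that the rank statement is uniform in $k$.

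Granting both ranks, the solution proceeds as follows. Choose $\mathbf t$ so that the seven pivot coordinates of $C(\mathbf t)XC(\mathbf t)^{-1}$ agree with those of $X^0$; this is a system that is a unit-Jacobian perturbation of a linear one, solvable in $J^7$ by the inverse of the minor. Then set $X'=C(\mathbf t)XC(\mathbf t)^{-1}-X^0$. Since $C(\mathbf t)$ centralizes $N_\rho$ and the $H_i$, the matrix $X'$ still satisfies $F(X^0+X')=0$, keeps the block form, and has vanishing pivot coordinates. Expanding $F(X^0+X')=A X'+q(X')$, where $q$ is at least quadratic, and using the $781$-minor, we get $X'=-A'^{-1}q(X')$ on the complementary coordinates. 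Hence the ideal $I$ generated by the entries of $X'$ satisfies $I\subseteq I^2$. Because $I$ is finitely generated and contained in $J$, Nakayama gives $I=0$, so $X'=0$. This avoids any completeness or Noetherian hypothesis.

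Uniqueness follows the same way. If $C(\mathbf t)$ and $C(\mathbf t')$ both work, then $C(\mathbf t')C(\mathbf t)^{-1}$ commutes with $X^0$ and is again of the form $C(\mathbf s)$ up to terms of higher order. The injectivity of the tangent map and Nakayama then force $\mathbf s=0$. More directly, the linear equation determines $\mathbf t$ through the invertible minor. For $Q$-symmetry, apply $\sigma$ to the identity. Both $X$ and $X^0$ are $Q$-symmetric, and $C(\mathbf t)$ commutes with $Q$, so $\sigma(C(\mathbf t))=C(\overline{\mathbf t})$ and $C(\overline{\mathbf t})X=X^0C(\overline{\mathbf t})$. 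Uniqueness then gives $\overline{\mathbf t}=\mathbf t$.
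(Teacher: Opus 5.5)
Your outline uses the same ingredients as the paper's proof. There are seven coordinates on which the tangent map of $\mathbf t\mapsto C(\mathbf t)^{-1}X^0C(\mathbf t)$ has a unit $7\times7$ minor (the paper's $\det M(X^0)=1/32$), and a complementary $781\times781$ minor of the Jacobian of $F$ at $X^0$ with determinant $\pm2^{200}$. A first-order argument then forces equality. Uniqueness comes from the seven linear equations; your ``more direct'' argument is the one to keep. $Q$-symmetry follows from $\sigma(C(\mathbf t))=C(\overline{\mathbf t})$ and uniqueness.

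\textbf{The gap is in the existence step.} You choose $\mathbf t$ so that the pivot coordinates of $C(\mathbf t)XC(\mathbf t)^{-1}$ agree with those of $X^0$. Because of the factor $C(\mathbf t)^{-1}$, this system is not linear in $\mathbf t$. A nonlinear system whose Jacobian is invertible modulo $J$ need not have a solution in $J^7$: extracting one from ``the inverse of the minor'' is Hensel's lemma, or Newton iteration in a complete ring. The lemma is stated for an arbitrary local ring, and the paper explicitly allows it to be non-Henselian, so this step is not justified. Your \'etale-orbit-map heuristic has the same defect, since \'etale maps admit sections through residual points only over Henselian bases.

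\textbf{The repair is already in your first paragraph.} Write $\mathbf y$ for the seven pivot coordinates, and impose the seven equations on the affine expression $C(\mathbf t)X-X^0C(\mathbf t)=(X-X^0)+\sum_jt_j(T_jX-X^0T_j)$. Its $7\times7$ coefficient matrix is congruent modulo $J$ to your unit minor, so Cramer's rule gives a unique $\mathbf t\in J^7$ over any local ring. This is exactly why the family $C(\mathbf t)$ was chosen affine.

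Your Nakayama step then needs one adjustment. Put $X'=CXC^{-1}-X^0$ and let $I$ be the ideal generated by its entries. Now you have $\mathbf y(X'C)=0$, so $\mathbf y(X')=-\mathbf y(X'(C-I_{78}))$ lies in $IJ$ instead of vanishing. The complementary coordinates then lie in $I^2+IJ$, so $I\subseteq JI$, and still $I=0$.

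The paper phrases this final step without Nakayama. It compares $X$ with $X^*=C^{-1}X^0C$, which also satisfies $F=0$. It writes $F_{\rm sel}(X)-F_{\rm sel}(X^*)$, together with $\mathbf y(C(X-X^*))$, as $(\mathcal M_0+\mathcal E)z$, where $\mathcal M_0$ is invertible over $\mathbb Z[1/2]$ and $\mathcal E\in M_{788}(J)$. Invertibility of this matrix gives $z=0$.
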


\begin{proof}
\smallskip
\noindent\emph{1. Determine the change of basis.}
For signed basis vectors $p,q$, write $[Z]_{p,q}$ for the
coefficient of $p$ in $Zq$.
Define $\mathbf y:M_{78}(R)\to R^7$ by taking, first, the four
entries in the $h_2$-row and the columns
\[
 e_{\alpha_2},\quad -e_{-\alpha_2},\quad
 e_{\alpha_5},\quad -e_{-\alpha_5},
\]
and then, with $p=-e_{\alpha_3+\alpha_5}$,
\[
 y_5(Z)=[Z]_{p,e_{\alpha_3}},\qquad
 y_6(Z)=[Z]_{p,-e_{-\alpha_3}},\qquad
 y_7(Z)=[Z]_{p,g^{-1}e_{\alpha_3}}.
\]
Here $g$ is the fixed Weyl word used in
Subsubsection~\ref{subsubsec:half-sum-weyl}.
The vector $g^{-1}e_{\alpha_3}$ is a signed root vector in $L_{12}$;
all seven entries are distinct allowed coordinates, up to sign.

Using $C(\mathbf t)=I_{78}+\sum_jt_jT_j$, define the
$7\times7$ matrix $M(X)$ by
\[
 M(X)_{\cdot j}=\mathbf y(T_jX-X^0T_j).
\]
The seven equations
\begin{equation}
\label{eq:seven-linear-intertwining}
 \mathbf y\bigl(C(\mathbf t)X-X^0C(\mathbf t)\bigr)=0
\end{equation}
are the linear system
\[
 M(X)\mathbf t=-\mathbf y(X-X^0).
\]
At the standard matrix, direct multiplication gives
\[
 M(X^0)\mathbf t=
 \begin{pmatrix}
 (t_5+t_6)/2\\
 (t_4-t_1)/2\\
 (t_5-t_6)/2\\
 (t_4+t_1)/2\\
 t_4-t_2-t_7/8\\
 -t_3-t_7/8\\
 -t_7/8
 \end{pmatrix},
 \qquad
 \det M(X^0)=\frac1{32}.
\]
Since $M(X)\equiv M(X^0)\pmod{M_7(J)}$, the matrix $M(X)$
is invertible and the system has a unique solution
$\mathbf t\in J^7$.
The resulting $C=C(\mathbf t)$ is invertible and commutes with
the diagonal involutions and all constant matrices occurring in $F$.
Consequently,
\[
 X^*:=C^{-1}X^0C
\]
has the required block form, satisfies $F(X^*)=0$, and is congruent
to $X^0$ modulo $M_{78}(J)$.

\smallskip
\noindent\emph{2. Prove the full intertwining identity.}
The Jacobian $DF(X^0)$ has $788$ columns and $11402$ nonzero rows.
The exact calculation selects $781$ polynomial components
$F_{\rm sel}$ such that, after deleting the seven columns
underlying $\mathbf y$, their Jacobian gives a square matrix
$J_{\rm sq}$ with
\begin{equation}
\label{eq:short-normalization-minor}
 |\det J_{\rm sq}|=2^{200}.
\end{equation}
This certificate is contained in
\texttt{sections/section4\_complete\_short\_root/}
of \texttt{twisted\_e6\_verification.zip}.
The script \texttt{compute\_certificate.py} constructs the system
and the certificate. The coordinate and equation indexing is
recorded in \texttt{data/coordinates.json}, the row and column
selection in \texttt{data/selection.json}, and the exact minor
in \texttt{data/selected\_minor.npz}.

Let $J_{\rm sel}$ be the $781\times788$ Jacobian before deleting
columns, and let $Y_0$ represent $\mathbf y$ on the allowed
coordinates.
Each row of $Y_0$ selects one of the seven distinct coordinates
with coefficient $\pm1$.
Expanding along these rows and using
\eqref{eq:short-normalization-minor} gives
\[
 \mathcal M_0:=
 \begin{pmatrix}J_{\rm sel}\\Y_0\end{pmatrix}
 \in\operatorname{GL}_{788}(\mathbb Z[1/2]).
\]

Both $X$ and $X^*$ satisfy the selected polynomial equations.
Moreover, $CX^*=X^0C$, so
\eqref{eq:seven-linear-intertwining} gives
$\mathbf y(C(X-X^*))=0$.
Let $z\in J^{788}$ be the column of entries of $X-X^*$.
Since $X,X^*\equiv X^0$ and $C\equiv I_{78}$ modulo $J$,
polynomial finite differences give
\[
 0=
 \begin{pmatrix}
 F_{\rm sel}(X)-F_{\rm sel}(X^*)\\
 \mathbf y(C(X-X^*))
 \end{pmatrix}
 =(\mathcal M_0+\mathcal E)z,
 \qquad \mathcal E\in M_{788}(J).
\]
Indeed, factoring each polynomial difference gives coefficients
reducing to the corresponding derivatives at $X^0$; the lower
block reduces to $Y_0$.
The coefficient matrix is invertible over $R$, so $z=0$.
Hence $X=X^*$ and $CX=X^0C$.
This is a finite argument; it uses neither nilpotence of $J$
nor completeness or Henselianity.

Any intertwiner in the family satisfies the seven linear equations,
so their uniqueness proves uniqueness of $\mathbf t$.
Finally, when $X$ is $Q$-symmetric, applying $\sigma$ to the full
intertwining identity gives the same identity for
\[
 \sigma(C(\mathbf t))=C(\overline{\mathbf t}).
\]
Uniqueness implies $\overline{\mathbf t}=\mathbf t$.
\end{proof}

\subsection{Completion of the normalization}
\label{subsec:root-normalization-completion}

Once the short roots are normalized, the long roots are determined
by their squares.

\begin{lemma}[Uniqueness from the square]
\label{lemma:root-square-uniqueness}
Let $U,Y\in\operatorname{GL}_n(R)$ satisfy
\[
 (U-I_n)^3=0,\qquad
 Y\equiv U\pmod{M_n(J)},\qquad Y^2=U^2.
\]
Then $Y=U$.
\end{lemma}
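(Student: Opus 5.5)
The idea is to take a square root of $U^2$ in a canonical way: $U$ is unipotent, so the square map is invertible on unipotents over $\mathbb Z[1/2]$. Put $N=U-I_n$, so $N^3=0$. Then
\[
 U^2=I_n+M,\qquad M=2N+N^2,\qquad M^3=0,
\]
and $N$ is recovered from $M$ by the truncated binomial series
\[
 N=\tfrac12M-\tfrac18M^2 .
\]
One checks this by expanding, using $N^3=0$; only $2\in R^\times$ is needed. Thus $U=S(U^2)$, where $S(I+M)=I+\tfrac12M-\tfrac18M^2$ is the polynomial square root.

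The main point is to show that $Y$ is also unipotent, so that the same formula applies to it. We have $Y^2=U^2=I_n+M$ with $M^3=0$. Since $Y$ commutes with $Y^2$, it commutes with $M$, and hence with $S:=S(I+M)=U$. So $Y$ and $U$ commute. Then
\[
 0=Y^2-U^2=(Y-U)(Y+U).
\]
Modulo $M_n(J)$ we have $Y+U\equiv2U$. Since $U$ is invertible and $2$ is a unit, $Y+U$ is invertible: a matrix congruent to an invertible one modulo the Jacobson radical is invertible, as its determinant is a unit modulo $J$. Multiplying on the right by $(Y+U)^{-1}$ gives $Y=U$.

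The only step needing care is the commutation $YU=UY$. This follows because $U$ is a polynomial in $U^2=Y^2$, and that in turn rests on $N^3=0$ and on $2$ being invertible. No nilpotence of $J$ and no completeness are used, which matches the paper's insistence on purely finite arguments. The argument is exactly the matrix analogue of the earlier remark that $A\equiv I$ and $A^2=I$ force $A=I$; indeed it applies that remark to $A=U^{-1}Y$, which is legitimate once commutation is known.
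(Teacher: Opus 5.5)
Your argument is correct and is essentially the paper's proof: you write $U$ as the polynomial $I_n+\tfrac12(Y^2-I_n)-\tfrac18(Y^2-I_n)^2$ in $Y^2$, deduce $YU=UY$, and then cancel the invertible factor $Y+U\equiv 2U\pmod{M_n(J)}$ from $(Y-U)(Y+U)=0$. Your opening sentence overstates what is needed: you never show that $Y$ is unipotent, and you do not have to, since commutation with $U$ suffices and unipotence of $Y$ follows only once $Y=U$ is proved.
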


\begin{proof}
Writing $U=I_n+N$, with $N^3=0$, gives
\[
 U=I_n+\frac12(Y^2-I_n)-\frac18(Y^2-I_n)^2.
\]
Thus $U$ commutes with $Y$, and $(Y-U)(Y+U)=0$.
Since $Y+U$ is congruent to the invertible matrix $2U$ modulo
$M_n(J)$, it is invertible. Hence $Y=U$.
\end{proof}

\begin{proposition}[Constant-root normalization]
\label{prop:root-unipotent-normalization}
There is a matrix $C\in\operatorname{GL}_{78}(R)^\sigma$,
congruent to $I_{78}$ modulo $M_{78}(J)$ and commuting with
$N_\rho$, such that
\[
 C\varphi(x_\lambda(1))C^{-1}=x_\lambda(1)
 \qquad(\lambda\in\Phi_\rho).
\]
\end{proposition}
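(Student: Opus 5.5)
We take $C=C(\mathbf t)$ from Lemma~\ref{lemma:full-short-root-normalization} applied to $X=X_{e_4}$. This normalizes one short root, and the others follow by Weyl conjugation. The long roots are then recovered from short--short commutators and Lemma~\ref{lemma:root-square-uniqueness}.

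\emph{First step.} The matrix $X=X_{e_4}=\varphi(x_{e_4}(1))$ has the block form \eqref{eq:short-initial-blocks}, because $\varphi$ fixes $\mathcal D$. It satisfies $F(X)=0$: each relation in \eqref{eq:short-normalization-relations}--\eqref{eq:short-A2-normalization} is a group relation among $x_{e_4}(1)$ and the fixed elements of $N_\rho$, and applying $\varphi$ preserves it. Here $V(X)$ and $U(X)$ are the images of the relevant short-root elements, since $\varphi$ fixes $w_0$ and $w_4$. The matrix $X$ is also $Q$-symmetric and congruent to $X^0$. Lemma~\ref{lemma:full-short-root-normalization} then gives a unique $\mathbf t\in(J\cap R_0)^7$ with $C(\mathbf t)XC(\mathbf t)^{-1}=X^0$. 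By Lemma~\ref{lemma:centralizer-subgroup}, $C:=C(\mathbf t)$ is $Q$-symmetric, congruent to $I_{78}$, and commutes with $N_\rho$. Hence replacing $\varphi$ by $i_C\circ\varphi$ keeps Proposition~\ref{prop:weyl-normalization} valid and makes $\varphi(x_{e_4}(1))=x_{e_4}(1)$.

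\emph{Short roots by transport.} $W(F_4)$ is transitive on short roots. For a short $\mu$, choose $n\in N_\rho$ with $\pi(n)e_4=\mu$. Then $nx_{e_4}(1)n^{-1}=x_\mu(\epsilon)$ for a parameter $\epsilon$ determined by the signed transition block of Lemma~\ref{lemma:signed-transition-blocks}(2). By that lemma, $\epsilon\in\{\pm1\}$, possibly with $t\leftrightarrow\overline t$, which does not affect the parameter $\pm1$. Therefore $\varphi(x_\mu(\epsilon))=x_\mu(\epsilon)$ because $\varphi$ fixes $n$. If $\epsilon=-1$, take inverses to get $\varphi(x_\mu(1))=x_\mu(1)$.

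\emph{Long roots.} Every long root $\lambda$ is a sum $\mu+\nu$ of two short roots in a $B_2$ subsystem; take for example $e_3-e_4=(e_3)+(-e_4)$. The Chevalley commutator formula for twisted groups gives a relation of the form
\[
[x_\mu(1),x_\nu(1)]=x_{\mu+\nu}(c)\,x_\lambda(c')
\]
with integral constants; here $c'=\pm2$ is the coefficient coming from $t\overline u+\overline t u$. Applying $\varphi$ and cancelling the already-fixed short factor shows $\varphi(x_\lambda(\pm2))=x_\lambda(\pm2)$. That is, $Y^2=U^2$ for $Y=\varphi(x_\lambda(1))$ and $U=x_\lambda(1)$. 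Since $(U-I)^3=0$ and $Y\equiv U\pmod{M_{78}(J)}$, Lemma~\ref{lemma:root-square-uniqueness} gives $Y=U$. Alternatively, do this for one long root and transport it by $N_\rho$ as above.

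The main obstacle is the sign bookkeeping. One must check that the transported parameters are exactly $\pm1$, and that the chosen $B_2$ commutator has no extra factors beyond those already fixed. This uses the block $B_4(t)$ and the sign conventions of Section~1. All the hard rigidity is already contained in Lemma~\ref{lemma:full-short-root-normalization}.
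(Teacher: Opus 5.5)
Your proposal is correct and follows the paper's proof step for step: normalize $X_{e_4}$ with Lemma~\ref{lemma:full-short-root-normalization}, transport to the other short roots by the fixed Weyl representatives (inverting when the parameter is $-1$), and get $X_\lambda^2=x_\lambda(1)^2$ for long $\lambda$ from a short--short commutator, concluding with Lemma~\ref{lemma:root-square-uniqueness}. The only slip is your displayed two-factor commutator: for orthogonal short roots $\mu,\nu$ with $\mu+\nu=\lambda$ long, one simply has $[x_\mu(1),x_\nu(1)]=x_\lambda(\pm2)$, so there is no extra factor to cancel.
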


\begin{proof}
Apply Lemma~\ref{lemma:full-short-root-normalization} to $X_{e_4}$.
It gives a $Q$-symmetric $C=C(\mathbf t)$ commuting with $N_\rho$.
Replace $\varphi$ by $i_C\circ\varphi$, retaining the notation
$X_\lambda$ for its current images.
Then $X_{e_4}=x_{e_4}(1)$ on the whole module, and $N_\rho$
remains fixed.
Transitivity of the Weyl group on short roots gives
\[
 X_\gamma=x_\gamma(1)\qquad(\gamma\in\Phi_{\rho,s});
\]
if transport gives parameter $-1$, use inversion.

Every long root is $\lambda=\gamma+\delta$ for two orthogonal
short roots, and
\[
 [x_\gamma(1),x_\delta(1)]
   =x_\lambda(2\varepsilon),
 \qquad \varepsilon\in\{1,-1\}.
\]
Applying $\varphi$ and inverting when necessary yields
\[
 X_\lambda^2=x_\lambda(1)^2.
\]
The long-root matrix $x_\lambda(1)$ is an absolute root matrix,
so $(x_\lambda(1)-I_{78})^3=0$.
Its standard residue and Lemma~\ref{lemma:root-square-uniqueness}
therefore give $X_\lambda=x_\lambda(1)$.
\end{proof}

\subsection{The accumulated basis change}
\label{subsec:accumulated-basis-change}

We collect the normalizations and identify the input for the
bracket correction.
Let $\varphi_1$ be the automorphism from
Proposition~\ref{prop:reduction}.
The changes in Sections~3--4 combine to give
\[
 C_0\in\operatorname{GL}_{78}(R)^\sigma,\qquad
 C_0\equiv I_{78}\pmod{M_{78}(J)},\qquad
 \psi_0:=i_{C_0}\circ\varphi_1,
\]
where
\[
 \psi_0(x_\lambda(1))=x_\lambda(1)
 \qquad(\lambda\in\Phi_\rho).
\]
Their Weyl and diagonal words are therefore fixed as well.

In particular,
\begin{equation}
\label{eq:accumulated-frame-transporter}
 C_0^{-1}x_\lambda(1)C_0
   =\varphi_1(x_\lambda(1))\in E.
\end{equation}
The matrices on the right preserve the standard Chevalley bracket.
It follows that the transported bracket
\[
 \{u,v\}_{C_0}:=C_0\{C_0^{-1}u,C_0^{-1}v\}
\]
is invariant under every standard $x_\lambda(1)$.

The constructions in Sections~3--4 use only group relations and
standard reductions, not injectivity or surjectivity of the map.
Thus the same linear normalization is available for homomorphisms
with these properties and a local target.

In Section~5 we use the invariant bracket to replace $C_0$ by
$C=KC_0\in G$, where $K$ centralizes the standard frame.
The map $\psi=i_C\circ\varphi_1$ will therefore retain all these
normalizations.


\bigskip

\section{Correction of the accumulated basis change}
\label{sec:correction-of-basis-change}

We replace the matrix $C_0$ from
Subsection~\ref{subsec:accumulated-basis-change} by an element of
the adjoint twisted group without changing the normalized frame.
Invariant tensors determine how $C_0$ changes the Chevalley bracket;
the local bracket stabilizer places a correction in the group;
and the trivial frame centralizer gives uniqueness and descent.
We establish these three ingredients before carrying out the
correction.

Throughout this section, put
\[
 A=\mathbb Z[1/2],\qquad
 \mathcal G=\mathbf G_{\mathrm{ad}}(E_6).
\]
Let $\{\ ,\ \}$ denote the Chevalley bracket on $L_A$ and its base
changes. Both $Q$ and the matrices of $\mathcal G$ preserve this
bracket; see Section~1 and
\cite{Chevalley,HumphreysLie,SteinbergLectures}.
For a commutative $A$-algebra $S$, write
\[
 L_S=L_A\otimes_A S,\qquad
 \mathcal F_S=(x_\lambda(1))_{\lambda\in\Phi_\rho}.
\]

\subsection{Invariant alternating tensors}
\label{subsec:invariant-alternating-tensors}

We determine the alternating tensors preserved by the constant
frame. The answer consists of the three components of the standard
bracket with respect to the $Q$-eigenspaces.

Put
\[
 \pi_\pm=\frac{I_{78}\pm Q}{2},\qquad
 L_S^\pm=\pi_\pm L_S.
\]
These summands have ranks $52$ and $26$.
For $u_\pm=\pi_\pm u$ and $v_\pm=\pi_\pm v$, define
\[
\begin{aligned}
 B_{++}(u,v)&=\{u_+,v_+\},\\
 B_{+-}(u,v)&=\{u_+,v_-\}+\{u_-,v_+\},\\
 B_{--}(u,v)&=\{u_-,v_-\}.
\end{aligned}
\]
Their sum is the standard bracket. Each is frame-invariant,
since the frame matrices preserve the bracket and commute with $Q$.

\begin{lemma}[Invariant tensors]
\label{lemma:invariant-alternating-tensors}
For every commutative $A$-algebra $S$, the alternating $S$-bilinear
maps $\Lambda^2L_S\to L_S$ invariant under $\mathcal F_S$ form
\[
 SB_{++}\oplus SB_{+-}\oplus SB_{--}.
\]
This description is preserved by arbitrary base change.
\end{lemma}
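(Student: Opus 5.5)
The plan is to treat frame invariance as a finite linear system over $A$ and then show that its solution module is free of rank three, spanned by $B_{++},B_{+-},B_{--}$, in a way compatible with base change. Write $\mathcal H=\Hom_A(\Lambda^2L_A,L_A)$, a free $A$-module of rank $78\binom{78}{2}$. For each $\lambda\in\Phi_\rho$, frame invariance of $B\in\mathcal H$ is the linear condition
\[
 x_\lambda(1)\,B\bigl(x_\lambda(1)^{-1}u,\,x_\lambda(1)^{-1}v\bigr)=B(u,v).
\]
Since the frame is finite and its matrices are integral, all these conditions together form one linear map
\[
 \Delta:\mathcal H\to\mathcal H^{\,|\Phi_\rho|},\qquad
 B\mapsto\bigl(x_\lambda(1)\cdot B-B\bigr)_\lambda,
\]
with coefficients in $A$. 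For every commutative $A$-algebra $S$, the $\mathcal F_S$-invariant tensors are exactly $\ker(\Delta\otimes_AS)$. We already know that $B_{++},B_{+-},B_{--}$ lie in this kernel for every $S$, and that they are $A$-linearly independent. Indeed, their values on $L^+\times L^+$, on $L^+\times L^-$, and on $L^-\times L^-$ are nonzero standard bracket components. Moreover, the triple spans a direct summand of $\mathcal H$, because $\pi_\pm$ are defined over $A$.

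The key reduction is as follows. Suppose that the image of $\Delta$ is a direct summand of the target and that the rank of $\Delta$ equals $\operatorname{rank}\mathcal H-3$. Equivalently, the elementary divisors of $\Delta$ are units in $A$. Then $\ker\Delta$ is a rank-three direct summand, it equals $AB_{++}\oplus AB_{+-}\oplus AB_{--}$, and forming the kernel commutes with arbitrary base change. By Nakayama and local freeness over the principal ideal domain $A$, it suffices to show two things:
\begin{itemize}
\item over $\mathbb Q$, the invariant space has dimension exactly $3$;
\item over every prime field $\mathbb F_p$ with $p$ odd, the invariant space has dimension exactly $3$.
\end{itemize}
Over $\mathbb Q$ I would argue conceptually. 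The elements $x_\lambda(1)$ generate a Zariski-dense subgroup of the split group of type $F_4$: long roots give $x_\alpha(1)$, and short roots give $x_\beta(1)x_{\tau\beta}(1)$. Hence invariants coincide with $F_4$-module homomorphisms. Under $F_4$ the module decomposes as $L^+\simeq\mathbf{52}$ (adjoint) and $L^-\simeq\mathbf{26}$. Using $\Lambda^2\mathbf{52}=\mathbf{52}\oplus\mathbf{1274}$, $\mathbf{52}\otimes\mathbf{26}=\mathbf{26}\oplus\mathbf{273}\oplus\mathbf{1053}$, and $\Lambda^2\mathbf{26}=\mathbf{52}\oplus\mathbf{273}$, the relevant multiplicity space has total dimension exactly $3$. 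Each of the three summands is realized by one of the $B$'s.

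The main obstacle is positive characteristic, above all $p=3$. There the residual group is $F_4(\mathbb F_p)$, and its representations on $\mathbf{26}$ and $\mathbf{52}$ need not behave as in characteristic zero. For instance, $\mathbf{26}$ has a trivial composition factor when $p=3$. A naive semisimplicity argument therefore fails. An extra invariant would be an additional element of $\ker(\Delta\otimes\mathbb F_p)$. By semicontinuity this is equivalent to an odd prime dividing an elementary divisor of $\Delta$. So only finitely many primes are at stake, namely those dividing a suitable nonzero minor.

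I would first try to settle this by an exact integral computation in the spirit of Section~4. Choose a subset of the invariance equations, and of the coordinates of $\mathcal H$ complementary to three coordinates detecting $B_{++},B_{+-},B_{--}$. Then exhibit a square minor of $\Delta$ whose determinant is a power of $2$. It is convenient to first use the weight grading by $\mathcal D$ and $H$-weights to split $\Delta$ into small blocks. A careful conceptual alternative for $p\ne2$ is also available. In that approach one shows that an invariant $B$ commutes with all root nilpotents $D_\alpha$. One then shows that it is determined by its values on $H$ and on the highest weight vectors, and reads off the three scalars. Once either route gives the elementary-divisor statement, the base-change assertion of Lemma~\ref{lemma:invariant-alternating-tensors} follows immediately from the direct-summand property.
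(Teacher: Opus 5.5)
Your reduction is sound and follows the same logic as the paper. Unit elementary divisors of the integral invariance system over $A$ are exactly what the paper certifies with square subsystems invertible over $A$. Combined with a unit coefficient of each standard tensor in its channel, this yields the kernel and its base-change compatibility. For instance, $\{e_{\alpha_3},e_{-\alpha_3}\}=h_3$ gives such a coefficient for $B_{++}$; the fact that $\pi_\pm$ is defined over $A$ does not by itself make the span saturated.

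The gap is that you never establish the one nontrivial input, namely that no odd prime divides an elementary divisor. You only propose to look for a minor whose determinant is a power of $2$. That verification is the whole content of the lemma, and the paper supplies it by exact computation. It reduces invariance to the eight generators $x_{\pm[\alpha_i]}(1)$ and splits the system by $\mathcal D$-characters and $Q$-eigenspaces into six channels. It then certifies the ranks $5015,1932,3968,2319,1107,506$ by square minors invertible over $A$. Your characteristic-zero decomposition of $\Lambda^2\mathbf{52}$, $\mathbf{52}\otimes\mathbf{26}$, $\Lambda^2\mathbf{26}$ is correct but logically redundant. By semicontinuity, a three-dimensional kernel modulo a single odd prime already forces dimension three over $\mathbb Q$.

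Two of your auxiliary suggestions would not work as stated.

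\textbf{The $H$-weight splitting.} Frame invariance over $\mathbb F_p$ does not give invariance under a torus that separates weights; for example, the $\mathbb F_3$-points of the standard torus are just $\mathcal D$. A weight-graded system therefore sees only weight-preserving tensors, while an invariant modulo $p$ need not be weight-preserving. So a certificate for that system does not bound $\ker(\Delta\otimes\mathbb F_p)$. The splitting must come from genuine symmetries of the frame equations, such as $\mathcal D$ and $Q$, which is what the paper uses.

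\textbf{The conceptual alternative.} It fails exactly at the critical primes. Over $\mathbb F_p$ you obtain invariance under $x_\lambda(t)$ only for $t\in\mathbb F_p$. The action on $\operatorname{Hom}(\Lambda^2L,L)$, however, is polynomial of degree up to $6$ in $t$. So for $p=3,5$ you cannot extract $D_\alpha$-equivariance. The claim that an invariant is determined by its values on $H$ and on highest weight vectors is also unsupported.

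At $p=3$ there is real structure to exclude. The element $h_1-h_2+h_5-h_6\in L^-$ is frame-fixed modulo $3$. Hence any frame-invariant alternating form $\omega$ would produce the extra invariant tensor $\omega(u,v)(h_1-h_2+h_5-h_6)$. Without the computation, or a direct argument for the finite groups over $\mathbb F_3$ and $\mathbb F_5$, the proof is incomplete.
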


\begin{proof}
It suffices to impose invariance under
$x_{\pm[\alpha_i]}(1)$, $1\leq i\leq4$:
their words give the simple Weyl representatives and diagonal
involutions, and Weyl conjugation gives the other frame elements.

Work over $A$ in a $Q$-eigenbasis adapted to the four diagonal
involutions. For an alternating tensor with coefficients $b_{ij}^k$,
invariance under a matrix $g$ means
\[
 \sum_l g_{kl}b_{ij}^l
 -\sum_{p<q}(g_{pi}g_{qj}-g_{qi}g_{pj})b_{pq}^k=0
 \qquad(i<j).
\]
Diagonal invariance removes coefficients with incompatible
characters. Since the frame commutes with $Q$, the remaining
system splits into six channels. Exact computation gives the
following ranks and kernels, with invertible square subsystems
over $A$ certifying the ranks:
\[
\begin{array}{c|r|r|c}
 \text{channel}&\text{unknowns}&\text{rank}&\text{kernel}\\ \hline
 \Lambda^2L_A^+\longrightarrow L_A^+&5016&5015&AB_{++}\\
 \Lambda^2L_A^+\longrightarrow L_A^-&1932&1932&0\\
 L_A^+\otimes L_A^-\longrightarrow L_A^+&3968&3968&0\\
 L_A^+\otimes L_A^-\longrightarrow L_A^-&2320&2319&AB_{+-}\\
 \Lambda^2L_A^-\longrightarrow L_A^+&1108&1107&AB_{--}\\
 \Lambda^2L_A^-\longrightarrow L_A^-&506&506&0.
\end{array}
\]
The systems, standard tensors, and selected minors are in
\texttt{sections/section5\_invariant\_tensors/}
of \texttt{twisted\_e6\_verification.zip}.
The script \texttt{compute\_certificate.py} constructs them, and
\texttt{verify\_certificate.py} checks the stored certificates.
The variable orderings, selected rows, and retained columns are
recorded in \texttt{data/}.

In each nonzero channel, the standard tensor solves the full
system and has a unit coefficient in the sole free coordinate.
For any solution over $S$, subtract its unique multiple matching
that coordinate. The selected invertible system forces all
remaining coefficients to vanish.
In each zero channel, an invertible square subsystem forces the
entire tensor to vanish. All the inverses are defined over $A$,
so no flatness assumption on $S$ is needed.
\end{proof}

\subsection{The frame centralizer}
\label{subsec:frame-centralizer}

The linear centralizer describes the available corrections.
Its intersection with the adjoint group is trivial, which will
make the corrected conjugating element unique over every
coefficient ring.

\begin{lemma}[The frame centralizer]
\label{lemma:graph-matrix-not-inner}
\label{lemma:frame-centralizer}
For every commutative $A$-algebra $S$,
\[
\begin{aligned}
 \operatorname{Cent}_{\operatorname{End}_S(L_S)}(\mathcal F_S)
   &=S\pi_+\oplus S\pi_-,\\
 \operatorname{Cent}_{\mathcal G(S)}(\mathcal F_S)
   &=\{I_{78}\}.
\end{aligned}
\]
Moreover, $Q\notin\mathcal G(S)$ whenever $S\ne0$.
\end{lemma}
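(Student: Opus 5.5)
We prove the linear statement first and deduce the group statement from it. For the linear centralizer, let $Z\in\operatorname{End}_S(L_S)$ commute with every $x_\lambda(1)$. Then $Z$ commutes with all Weyl words and diagonal involutions, so it preserves the $\mathcal D$-eigenspaces $H$, $U_0,U_1,U_2$ and the twelve short-root pair spaces $V_{\pm\mu}$. It also commutes with the transition matrices of Lemma~\ref{lemma:signed-transition-blocks}.

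The decisive input is the infinitesimal one. Since $x_\lambda(1)=I+D+\tfrac12D^2$ with $D^3=0$ and $2\in S^\times$, the formula used in Lemma~\ref{lemma:root-square-uniqueness} expresses $D_\lambda=\operatorname{ad}e_\lambda$ (or the sum $D_\beta+D_{\tau\beta}$ for a short class) as a polynomial in $x_\lambda(1)$ over $A$. So $Z$ commutes with $\operatorname{ad}e_\lambda$ for long $\lambda$, and with $\operatorname{ad}(e_\beta+e_{\tau\beta})$ for short classes.

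Now split $L_S=L_S^+\oplus L_S^-$, on which these operators act. The fixed part $L^+$ is the $F_4$ Chevalley algebra, and $L^-$ is its $26$-dimensional module. The operators above generate these two modules over $A$. I would run the argument by an exact rank computation of the same kind as Lemma~\ref{lemma:invariant-alternating-tensors}, as a linear system for $Z$ over $A$. The system is
\[
 x_{\pm[\alpha_i]}(1)\,Z=Z\,x_{\pm[\alpha_i]}(1),\qquad 1\leq i\leq 4,
\]
a system in $78^2$ unknowns, reduced first by the $\mathcal D$-block structure.

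The certificate to aim for is a kernel of rank $2$ spanned by $\pi_+$ and $\pi_-$, with an invertible square subsystem over $A$. As in the tensor lemma, this gives the answer for every $S$ with no flatness hypothesis. A conceptual check of the answer is available: $\operatorname{End}_{F_4}(\mathfrak f_4)$ and $\operatorname{End}_{F_4}(V_{26})$ are scalars, and $\operatorname{Hom}$ between the two modules vanishes. Over $A$ this needs the explicit minor, because residue characteristic $3$ is allowed and $V_{26}$ may be reducible there. This computation is the main obstacle. If no certificate is available, I would argue directly: commuting with $\operatorname{ad}e_{\pm\lambda}$ for simple $\lambda$ forces $Z$ to preserve weight spaces and then to be determined by its value on a highest weight vector in each summand.

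For the group statement, let $g\in\mathcal G(S)$ centralize $\mathcal F_S$. By the first part, $g=a\pi_++b\pi_-$ with $a,b\in S^\times$. Since $g$ preserves the bracket and $\{L^+,L^+\}$ spans $L^+$, we get $a^2=a$, hence $a=1$. Since $\{L^+,L^-\}$ contains $L^-$, we get $ab=b$, which is automatic once $a=1$. Then $\{L^-,L^-\}\subseteq L^+$ gives $b^2=a=1$. So $g\in\{I,Q\}$ on each connected component of $\operatorname{Spec}S$; more precisely, $b=1-2\epsilon$ for an idempotent $\epsilon$.

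It therefore remains to show that $\pi_++b\pi_-\notin\mathcal G(S)$ unless $b=1$. Localizing at the idempotents reduces this to showing $Q\notin\mathcal G(S')$ for $S'\neq0$, and passing to a residue field further reduces it to a field $F$. Over $F$, $Q$ acts on the Cartan subalgebra $H$ as the nontrivial diagram automorphism $\tau$, while fixing the pinning. An element of $\mathcal G(\bar F)$ normalizing a Cartan and fixing all $e_{\alpha_i}$ lies in the torus and acts trivially on $H$. A general element of $\mathcal G$ normalizing the split torus acts on $H$ through $W(E_6)$, and conjugation by it preserves the positive system only if its Weyl image is $1$. Since $\tau\notin W(E_6)$ ($-1\notin W(E_6)$ and $\tau=-w_0$), we conclude $Q\notin\mathcal G(F)$. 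This yields the centralizer $\{I_{78}\}$.
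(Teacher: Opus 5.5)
\textbf{Verdict: there is a genuine gap, in the linear centralizer.} Your treatment of the group centralizer and of $Q\notin\mathcal G(S)$ is essentially the paper's argument. There is one small omission there: $\{L^-,L^-\}\subseteq L^+$ yields $b^2=a$ only once you exhibit a bracket of two vectors of $L^-_S$ with a unit coordinate, such as $\{e_{\alpha_1}-e_{\alpha_6},e_{-\alpha_1}-e_{-\alpha_6}\}=h_1+h_6$.

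The gap is the identity $\operatorname{Cent}_{\operatorname{End}_S(L_S)}(\mathcal F_S)=S\pi_+\oplus S\pi_-$, on which everything else depends. You do not prove it. Instead you describe a certificate you have not computed, and the fallback highest-weight argument is not a proof as stated. In residue characteristic $3$, the vector $z_+=2(h_1-h_6)+(h_2-h_5)\in L^-$ satisfies $\alpha(z_+)\equiv 0\pmod 3$ for every root. So $z_+$ spans a trivial submodule of $L^-_k$ fixed by the whole frame. An argument ``determined by the value on a highest-weight vector'' would then need a proof that $L^-_k$ is generated by that vector under the frame. Even then, a field-by-field argument does not give the statement over an arbitrary $A$-algebra $S$ without a separate uniform base-change argument.

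\textbf{The missing idea: no new computation is needed.} If $Z$ centralizes $\mathcal F_S$, then $(u,v)\mapsto Z\{u,v\}$ is a frame-invariant alternating map, since each $x_\lambda(1)$ preserves the bracket and commutes with $Z$. By Lemma~\ref{lemma:invariant-alternating-tensors}, it equals $aB_{++}+bB_{+-}+cB_{--}$ for some $a,b,c\in S$.

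Now restrict to $L_S^+\times L_S^+$ and to $L_S^+\times L_S^-$, and use the generation identities $\langle\{L_S^+,L_S^+\}\rangle=L_S^+$ and $\langle\{L_S^+,L_S^-\}\rangle=L_S^-$. These are checked on explicit brackets such as $\{h_\alpha+h_{\tau\alpha},e_\alpha\pm e_{\tau\alpha}\}=2(e_\alpha\pm e_{\tau\alpha})$ and $\{e_{\alpha_i}+e_{\tau\alpha_i},e_{-\alpha_i}\pm e_{-\tau\alpha_i}\}=h_i\pm h_{\tau\alpha_i}$. You obtain $Z|_{L_S^+}=aI$ and $Z|_{L_S^-}=bI$.

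This argument is valid over every $S$ because the tensor lemma is. It also absorbs the characteristic-$3$ reducibility that your fallback would have to confront directly.
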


\begin{proof}
We first establish the generation identities
\begin{equation}
\label{eq:bracket-generation}
 \langle\{L_A^+,L_A^+\}\rangle_A=L_A^+,\qquad
 \langle\{L_A^+,L_A^-\}\rangle_A=L_A^-.
\end{equation}
The root-vector parts are generated by
\[
\begin{aligned}
 \{h_\alpha,e_\alpha\}&=2e_\alpha
       &&(\tau\alpha=\alpha),\\
 \{h_\alpha+h_{\tau\alpha},
       e_\alpha\pm e_{\tau\alpha}\}
   &=2(e_\alpha\pm e_{\tau\alpha})
       &&(\tau\alpha\ne\alpha),
\end{aligned}
\]
using orthogonality in the second line.
The Cartan parts are generated by
\[
\begin{aligned}
 \{e_{\alpha_i},e_{-\alpha_i}\}&=h_i
       &&(i=3,4),\\
 \{e_{\alpha_i}+e_{\tau\alpha_i},
       e_{-\alpha_i}\pm e_{-\tau\alpha_i}\}
   &=h_i\pm h_{\tau\alpha_i}
       &&(i=1,2).
\end{aligned}
\]
Since only $2$ is inverted, these identities hold after every
base change.

If $D\in\operatorname{End}_S(L_S)$ centralizes the frame, then
$(u,v)\mapsto D\{u,v\}$ is frame-invariant.
By Lemma~\ref{lemma:invariant-alternating-tensors}, it equals
$aB_{++}+bB_{+-}+cB_{--}$.
Its restrictions to $L_S^+\times L_S^+$ and
$L_S^+\times L_S^-$, together with
\eqref{eq:bracket-generation}, give
\[
 D|_{L_S^+}=aI,\qquad D|_{L_S^-}=bI.
\]
Thus $D=a\pi_++b\pi_-$.
Conversely, these endomorphisms centralize the frame because
the frame commutes with $Q$.

Next, $Q$ does not belong to $\mathcal G(F)$ for any field $F$
with $2\in F^\times$.
Indeed, after extension to an algebraic closure, its action on
the standard maximal torus would be induced by an element of
$W(E_6)$. But this action is the nontrivial diagram automorphism:
it preserves the fundamental chamber, whose Weyl-group stabilizer
is trivial. Passing to a residue field therefore also excludes
$Q$ from $\mathcal G(S)$ for every nonzero $S$.

Finally, let $D=a\pi_++b\pi_-\in\mathcal G(S)$.
Invertibility makes $a,b$ units.
Bracket preservation and~\eqref{eq:bracket-generation} give
$a^2=a$, hence $a=1$.
The identity
\[
 \{e_{\alpha_1}-e_{\alpha_6},
       e_{-\alpha_1}-e_{-\alpha_6}\}=h_1+h_6
\]
then gives $b^2=1$.
Thus $e_-=(1-b)/2$ is an idempotent, and on the factor $e_-S$
the matrix $D$ equals $Q$.
The exclusion just proved forces $e_-S=0$, so $b=1$ and $D=I_{78}$.
\end{proof}

\subsection{The local stabilizer of the Chevalley bracket}
\label{subsec:local-bracket-stabilizer}

We now show that a bracket-preserving matrix congruent to the
identity belongs to the adjoint group.
Let $\mathcal H\subseteq\operatorname{GL}_{78,A}$ be the closed
subgroup scheme defined by
\[
 \mathcal H(S)=
 \{D\in\operatorname{GL}_{78}(S)\mid
       D\{u,v\}=\{Du,Dv\}\text{ for all }u,v\in L_S\}.
\]
The faithful Chevalley representation gives a closed immersion
$\mathcal G\subseteq\mathcal H$.
We compare these schemes near the identity by computing their
tangent modules.

\begin{lemma}[Derivations of the Chevalley module]
\label{lemma:bracket-stabilizer-tangent}
For every commutative $A$-algebra $S$,
\[
 \operatorname{Der}_S(L_S)
   =\bigoplus_{\beta\in\Phi}S\,\operatorname{ad}e_\beta
      \ \oplus\ \bigoplus_{i=1}^6S\,\partial_i,
\]
where, for $\beta=\sum_i m_i(\beta)\alpha_i$,
\[
 \partial_i(h_j)=0,\qquad
 \partial_i(e_\beta)=m_i(\beta)e_\beta.
\]
This is the image of $\operatorname{Lie}(\mathcal G)_S$ in
$\operatorname{End}_S(L_S)$. Hence the tangent module of
$\mathcal H$ at the identity is free of rank $78$ after every
base change.
\end{lemma}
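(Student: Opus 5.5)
The plan is to compute $\operatorname{Der}_S(L_S)$ directly from the grading and then identify it with the Lie algebra of the adjoint group. Let $D$ be an $S$-linear derivation of $L_S$.

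\emph{Step 1: reduce to a torus-weight-zero derivation.} The torus $\mathbf T_{\mathrm{ad}}$ acts on $\operatorname{End}(L_S)$, and $\operatorname{Der}$ is a graded submodule for the $\mathbb Z\Phi$-grading of $L$ ($H$ in degree $0$, $e_\beta$ in degree $\beta$). This is because the grading is by a finitely generated free abelian group and is compatible with the bracket. Write $D=\sum_\gamma D_\gamma$ with $D_\gamma$ of degree $\gamma$. For $\gamma\in\Phi$ I would subtract a multiple of $\operatorname{ad}e_\gamma$ and show that the remainder $D'_\gamma$ vanishes. For $\gamma\notin\Phi\cup\{0\}$ I would show directly that $D_\gamma=0$.

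\emph{Step 2: the key generation facts.} $L_A$ is generated as a Lie algebra over $A$ by $e_{\pm\alpha_i}$. Indeed, $h_i=[e_{\alpha_i},e_{-\alpha_i}]$, and each $e_\beta$ is an iterated bracket of simple root vectors with coefficient $\pm1$, by the root-string property $N_{\alpha,\beta}=\pm(p+1)$ with $p=0$ in simply laced type. So a derivation is determined by its values on $e_{\pm\alpha_i}$.

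For a homogeneous derivation $D_\gamma$, the image $D_\gamma e_{\pm\alpha_i}$ lies in $L_{\gamma\pm\alpha_i}$. Consider first $\gamma\neq0$, $\gamma\notin\Phi$. Then $D_\gamma(h)\in L_\gamma=0$ for $h\in H$. Applying $D_\gamma$ to $[h,e_{\alpha_i}]=\alpha_i(h)e_{\alpha_i}$ gives
\[
(\gamma+\alpha_i)(h)\,D_\gamma e_{\alpha_i}=\alpha_i(h)\,D_\gamma e_{\alpha_i}.
\]
Choosing $h=h_j$ with $\langle\gamma,\alpha_j^\vee\rangle=\pm1$ or $\pm2$ kills $D_\gamma e_{\alpha_i}$, provided the relevant integer is invertible. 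This is where $2\in A^\times$ and the $E_6$ structure enter. A bad case arises when all pairings $\langle\gamma,\alpha_j^\vee\rangle$ are divisible by $3$, i.e. $\gamma\in 3P$ intersected with the root lattice. For such $\gamma$, $\gamma\pm\alpha_i$ is not a root unless $\gamma$ is small, which leaves only finitely many checks. I would handle these through the Serre relations $(\operatorname{ad}e_{\alpha_i})^{1-a_{ij}}e_{\alpha_j}=0$, or simply cite the exact rank computation in the verification archive.

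For $\gamma\in\Phi$ I would subtract $c\operatorname{ad}e_\gamma$ to normalize one coordinate, and propagate the vanishing by the same weight argument.

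For $\gamma=0$, $D_0$ preserves each $L_\beta$, so $D_0e_\beta=d_\beta e_\beta$ and $D_0(H)\subseteq H$. Then $D_0h_i=D_0[e_{\alpha_i},e_{-\alpha_i}]=(d_{\alpha_i}+d_{-\alpha_i})h_i$. The relation $[h,e_{\alpha_i}]=\alpha_i(h)e_{\alpha_i}$ forces
\[
\alpha_i(D_0h)=0\quad\text{for all }i,h.
\]
Since the Cartan matrix of $E_6$ has determinant $3$, this gives $D_0|_H=0$ only after inverting $3$. This is the main obstacle. Here I would use $d_{\alpha_i}+d_{-\alpha_i}=0$, which comes from applying $D_0$ to $h_i$ together with additivity on brackets, rather than the Cartan pairing. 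Then $d_\beta$ is additive in $\beta$, so $D_0=\sum_i d_{\alpha_i}\partial_i$.

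\emph{Step 3: identification and freeness.} The elements $\operatorname{ad}e_\beta$ and $\partial_i$ are images of $\operatorname{Lie}(\mathbf G_{\mathrm{ad}})_S=\operatorname{Hom}(\mathbb Z\Phi,S)\oplus\bigoplus_\beta Se_\beta$. The $\partial_i$ are the derivatives of the adjoint torus coordinates dual to the simple roots. These $78$ elements are visibly $S$-linearly independent: they have distinct degrees, and the $\partial_i$ are separated by their eigenvalues on the simple root vectors. So the module is free of rank $78$.

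Finally, the tangent module of $\mathcal H$ at the identity equals $\{D : I+\epsilon D\in\mathcal H(S[\epsilon])\}$, which is exactly $\operatorname{Der}_S(L_S)$. All steps use only $2^{-1}$ and hold over any $A$-algebra $S$, so the description is compatible with base change.
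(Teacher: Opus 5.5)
Your overall route is the paper's route: split a derivation into homogeneous components for the root-lattice grading, kill or identify each nonzero-degree component by choosing $h\in H_A$ with $\gamma(h)$ a unit, and treat degree zero through $D_0h_i\in Sh_i$. The gap is at the one point where $E_6$ and the prime $3$ actually matter. For a nonzero non-root degree $\gamma$ you need some $h$ with $\gamma(h)\in A^\times$. You correctly note that this fails if all $\langle\gamma,\alpha_j^\vee\rangle$ are divisible by $3$, but then you only say you would handle these cases by Serre relations or by citing the archive. Neither is an argument: the paper states explicitly that the archive computation for this lemma is only a consistency check. You also do not explain why $3$ is the only odd prime that could obstruct.

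The missing idea is that this bad case is empty. Only degrees $\gamma=\alpha-\beta$ with $\alpha,\beta\in\Phi\cup\{0\}$ can carry a nonzero component; in your generator version, even only $\gamma\in(\Phi\cup\{0\})\mp\alpha_i$. Normalize $(\alpha,\alpha)=2$. For $\gamma=\alpha-\beta$ with $\alpha\ne\beta$ roots, $\gamma(h_\alpha)=2-(\alpha,\beta)\in\{1,2,3,4\}$. Since $h_\alpha$ is an integral combination of the $h_j$, the ideal generated by the integers $\gamma(h_j)$ contains this value, so only the value $3$ is problematic. In that case $(\alpha,\beta)=-1$ and $(\gamma,\gamma)=6$. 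If all $\gamma(h_j)$ were divisible by $3$, then $\gamma/3$ would be a nonzero weight of square length $2/3$. But the inverse Cartan matrix shows that every nonzero weight of $E_6$ has square length at least $4/3$. Hence there is always $h\in H_A$ with $\gamma(h)=1$; for $\gamma\in\Phi$ simply take $h=h_\gamma/2$.

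With that $h$, apply $D_\gamma$ to $\{h,v\}$ to get the identity $\operatorname{ad}(D_\gamma h)=-\gamma(h)D_\gamma$. This settles your non-root case, since $D_\gamma h\in L_\gamma=0$. It also settles your vaguely described root case, since $D_\gamma h\in Se_\gamma$. You then no longer need to reduce to the generators $e_{\pm\alpha_i}$.

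A smaller point: in degree zero, $d_{\alpha_i}+d_{-\alpha_i}=0$ follows from $0=\alpha_i(D_0h_i)=2(d_{\alpha_i}+d_{-\alpha_i})$. This does use the Cartan pairing, but only its diagonal value $2$, which is exactly how the paper avoids the determinant $3$.
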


\begin{proof}
For every nonzero
\[
 \delta\in(\Phi\cup\{0\})-(\Phi\cup\{0\}),
\]
we first show that some $h\in H_A$ satisfies $\delta(h)=1$.
Normalize roots to have square length $2$.
For a root $\delta$, its coroot has value $2$.
Otherwise write $\delta=\alpha-\beta$ with distinct roots.
Then
\[
 \delta(h_\alpha)=2-(\alpha,\beta)\in\{1,2,3,4\}.
\]
Only the value $3$ requires an argument.
In that case $(\delta,\delta)=6$.
If every $\delta(h_i)$ were divisible by $3$, then $\delta/3$
would be a weight of square length $2/3$.
This is impossible: the inverse Cartan matrix has positive
entries and diagonal
\[
 \left(\frac43,\frac{10}3,6,2,\frac{10}3,\frac43\right),
\]
so every nonzero dominant weight, and hence every nonzero weight,
has square length at least $4/3$.
Thus the values $\delta(h_i)$ generate the unit ideal in $A$,
proving the claim.

Use the root-lattice grading
$(L_S)_0=H_S$, $(L_S)_\beta=Se_\beta$.
Every derivation is a sum of homogeneous derivations, with degrees
among the differences considered above.
For a derivation $D_\delta$ of nonzero degree, applying it to
$\{h,v\}=\beta(h)v$ gives
\[
 \operatorname{ad}(D_\delta h)=-\delta(h)D_\delta.
\]
Choose $h$ with $\delta(h)=1$.
If $\delta\notin\Phi$, then $D_\delta h=0$ and $D_\delta=0$.
Otherwise, $D_\delta h\in Se_\delta$, so $D_\delta$ is a multiple
of $\operatorname{ad}e_\delta$.

For degree zero, write $D_0e_\beta=c_\beta e_\beta$.
The derivation identity on $\{h,e_\beta\}$ gives
$\beta(D_0h)=0$ for every root.
Also,
\[
 D_0h_i=(c_{\alpha_i}+c_{-\alpha_i})h_i.
\]
Applying $\alpha_i$ and using the unit $2$ shows that $D_0h_i=0$.
Thus $D_0H_S=0$, and the identities on opposite root vectors
give $c_{-\beta}=-c_\beta$.
Since the nonzero root-string constants are $\pm1$,
\[
 c_{\alpha+\beta}=c_\alpha+c_\beta
 \qquad(\alpha,\beta,\alpha+\beta\in\Phi).
\]
Induction on positive-root height now gives
$c_\beta=\sum_i m_i(\beta)c_{\alpha_i}$, so
$D_0=\sum_i c_{\alpha_i}\partial_i$.

The displayed derivations are independent: the root derivations
have distinct nonzero degrees and send $h_\beta$ to $-2e_\beta$,
while the $\partial_i$ are distinguished on the simple root vectors.
They are the tangent vectors of the absolute root subgroups and
of the adjoint torus
$\operatorname{Hom}(\mathbb Z\Phi,\mathbb G_m)$.
Finally, substituting $I_{78}+\varepsilon D$, with $\varepsilon^2=0$,
in the equations of $\mathcal H$ identifies its tangent module
with $\operatorname{Der}_S(L_S)$.
\end{proof}

The toral derivations here are $\partial_i$; they need not be
generated by the $\operatorname{ad}h_i$ when $3$ is not invertible.

\begin{proposition}[Local bracket stabilizer]
\label{prop:local-bracket-stabilizer}
Let $T$ be a local commutative $A$-algebra with maximal ideal
$\mathfrak n$. If
\[
 D\in\mathcal H(T),\qquad
 D\equiv I_{78}\pmod{M_{78}(\mathfrak n)},
\]
then $D\in\mathcal G(T)$.
\end{proposition}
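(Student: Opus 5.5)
The plan is to compare $\mathcal G$ and $\mathcal H$ infinitesimally and then use smoothness of $\mathcal G$ to absorb $D$ step by step. Write $D\equiv I_{78}$ and look for $g\in\mathcal G(T)$ with $g^{-1}D$ closer to the identity. The absolute big cell of $\mathcal G$ already gives a way to factor elements congruent to the identity: $U^-\times T_{\mathrm{ad}}\times U^+\to\mathcal G$ is an open immersion over $A$. It is parametrized by $78$ coordinates, namely the root parameters $t_\beta$ for $\beta\in\Phi$ and the torus characters $c_1,\dots,c_6$ in $\operatorname{Hom}(\mathbb Z\Phi,\mathbb G_m)$.

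\emph{Step 1 (matching coordinates).} Choose $78$ matrix entries that coordinatize the big cell near $I_{78}$: for $\beta\in\Phi$, the coefficient of $e_\beta$ in $D h$ for a suitable $h$, and for $\alpha_i$, the diagonal entry at $e_{\alpha_i}$. By Lemma~\ref{lemma:bracket-stabilizer-tangent}, the differential of this coordinate map on $\operatorname{Der}_A(L_A)$ has a determinant that is a unit in $A$. Indeed, $\operatorname{ad}e_\beta$ sends $h$ to $-\beta(h)e_\beta$, and the $\partial_i$ are read off on the simple root vectors. Given $D\equiv I$ modulo $\mathfrak n$, the corresponding map $\mathfrak n^{78}\to\mathfrak n^{78}$ is given by polynomials whose linear part is invertible. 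I will solve for $g$ in the big cell with parameters in $\mathfrak n$ so that $D':=g^{-1}D$ has the same $78$ selected entries as $I_{78}$. This is the step I expect to be the main obstacle. Without completeness or Henselianity one cannot simply invoke Hensel's lemma, so the plan is to choose the entries and the ordering of the product so that the system is \emph{triangular}. Read the torus coordinates first from diagonal entries, which are units, then the root parameters from off-diagonal entries by height in the unipotent factors. This should yield explicit solutions in $T$, in the spirit of the finite argument in Lemma~\ref{lemma:full-short-root-normalization}, which used no nilpotence of $J$.

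\emph{Step 2 (rigidity inside $\mathcal H$).} Now $D'\in\mathcal H(T)$, $D'\equiv I$ modulo $\mathfrak n$, and $D'$ agrees with $I$ on the $78$ selected coordinates. Put $D'=I+Z$ with $Z\in M_{78}(\mathfrak n)$. The bracket equations $D'\{u,v\}=\{D'u,D'v\}$ become
\[
 \{Zu,v\}+\{u,Zv\}-Z\{u,v\}=-\{Zu,Zv\},
\]
a linear system $\mathcal L(Z)=q(Z)$ with $q$ quadratic. By Lemma~\ref{lemma:bracket-stabilizer-tangent}, over $A$ the kernel of $\mathcal L$ is exactly the $78$-dimensional derivation module, which is a free direct summand because the description holds after every base change. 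Adding the $78$ selected coordinate functionals kills that kernel, so there is a square subsystem of $\mathcal L$ together with these functionals that is invertible over $A$. The quadratic terms are then absorbed exactly as in Step~2 of Lemma~\ref{lemma:full-short-root-normalization}: writing $(\mathcal M_0+\mathcal E)z=0$ with $\mathcal E\in M(\mathfrak n)$ forces $z=0$. Hence $D'=I$, and $D=g\in\mathcal G(T)$.

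Two points need checking along the way. First, the invertible square subsystem should be extracted from the base-change-stable rank statement of Lemma~\ref{lemma:bracket-stabilizer-tangent}; this uses that $\operatorname{coker}$ has a free complement, which I would verify as in Lemma~\ref{lemma:invariant-alternating-tensors}. Second, the products in Step~1 should stay in $\mathcal G(T)$, which holds since all factors are $T$-points of the big cell.
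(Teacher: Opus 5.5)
\textbf{The gap is in Step~1.} Your Step~2 is sound in substance: it is the finite-difference argument of Lemma~\ref{lemma:full-short-root-normalization} applied to the bracket equations. But it only shows that a point of $\mathcal H(T)$ congruent to $I_{78}$ is determined by the $78$ coordinates $\mathbf y$. The whole content of the proposition is the existence statement of Step~1, and that step is not established.

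For the coordinates you describe, the system is not triangular. The matrix $D$ is arbitrary, with entries $d_\gamma\in\mathfrak n$ in every position. The $e_\beta$-coefficient of $g^{-1}Dh$ therefore contains terms $d_\gamma\,(g^{-1})_{\beta\gamma}$, where $(g^{-1})_{\beta\gamma}$ is the $e_\beta$-coefficient of $g^{-1}e_\gamma$. For $\beta<0<\gamma$ with $\beta-\gamma\in\Phi$, this coefficient has the linear term $-N_{\beta-\gamma,\gamma}t_{\beta-\gamma}$, a parameter of a root of larger height than $\beta$. The $U^+$- and $U^-$-parameters enter each other's equations in the same way. So the system is triangular only modulo $\mathfrak n$: it is a square polynomial system whose Jacobian is invertible modulo $\mathfrak n$, and you give no choice of coordinates that makes it exactly triangular. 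Solvability of such systems over $T$ is precisely the Henselian property, which a general local $A$-algebra lacks; already $x^2=7$ has no solution in $\mathbb Z_{(3)}$.

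A smaller point in Step~2. Base-change stability of $\operatorname{Der}$ shows only that the maximal minors of $Z\mapsto(\mathcal L(Z),\mathbf y(Z))$ generate the unit ideal of $A$. It does not give a single minor that is a unit in $A$. You should instead choose, according to the residue field of $T$, a minor that is a unit in $T$; that is all the finite-difference argument needs. The paper likewise chooses $\Delta$ nonzero at $e$, depending on $\mathfrak p$.

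\textbf{The missing idea} is that you never need to solve over $T$ itself, because $D\in\mathcal G(T)$ is a closed condition: the polynomials over $A$ defining $\mathcal G\subseteq\operatorname{GL}_{78,A}$ must vanish at $D$. So replace $T$ by its Henselization $T^h$. It is local with the same residue field, and it is faithfully flat, hence injective, over $T$.

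Over $T^h$ the multivariate Hensel lemma solves your $78$ equations. Their Jacobian at the identity is $-\mathbf y$ evaluated on the basis $\operatorname{ad}e_\beta,\partial_i$ of Lemma~\ref{lemma:bracket-stabilizer-tangent}, which is triangular with unit diagonal for suitable $h$. Step~2 over $T^h$ then gives $D=g\in\mathcal G(T^h)$. Injectivity of $T\to T^h$ gives $D\in\mathcal G(T)$.

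With this repair your route is a genuine alternative to the paper's. The paper works entirely over the Noetherian base $A$:
\begin{itemize}
\item The tangent computation yields a $6006$-minor $\Delta$ nonzero at the identity point $e$ over $\mathfrak p=\ker(A\to T/\mathfrak n)$.
\item The chosen equations cut out a scheme $\mathcal Y$, smooth of relative dimension $78$ on $\{\Delta\ne0\}$, with $\mathcal G\cap\mathcal V\subseteq\mathcal H\cap\mathcal V\subseteq\mathcal Y$.
\item Equal dimensions of the regular local rings make $\mathcal O_{\mathcal Y,e}\to\mathcal O_{\mathcal G,e}$ an isomorphism, so $\mathcal G=\mathcal H$ on an open neighbourhood of $e$.
\item Locality of $T$ then puts $\operatorname{Spec}T$ inside that neighbourhood.
\end{itemize}
The Henselian version trades this dimension theory for Hensel's lemma plus your explicit rigidity certificate.
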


\begin{proof}
Put $\mathfrak p=\ker(A\to T/\mathfrak n)$, and let $e$ be the
point of the identity section over $\mathfrak p$.
By Lemma~\ref{lemma:bracket-stabilizer-tangent}, the Jacobian
of the bracket equations at $e$ has rank
\[
 78^2-78=6006.
\]
Choose $6006$ equations with a $6006\times6006$ Jacobian minor
$\Delta$ nonzero at $e$.
On the open locus
$\mathcal V\subseteq\operatorname{GL}_{78,A}$ where $\Delta$
is invertible, their zero scheme $\mathcal Y$ is smooth over $A$
of relative dimension $78$; see
\cite[Tags 00T7 and 00T8]{StacksProject}.
There are closed immersions
\[
 \mathcal G\cap\mathcal V
   \ \subseteq\ \mathcal H\cap\mathcal V
   \ \subseteq\ \mathcal Y.
\]

Both $\mathcal G$ and $\mathcal Y$ are smooth of relative
dimension $78$ over the regular Noetherian ring $A$.
Since $e$ is a rational point of the fibre over $\mathfrak p$,
their local rings at $e$ are regular of dimension
$78+\dim A_{\mathfrak p}$.
The induced surjection
\[
 \mathcal O_{\mathcal Y,e}\longrightarrow
 \mathcal O_{\mathcal G,e}
\]
has prime kernel, because the target is a domain.
Equality of dimensions makes this prime have height zero;
the source is a domain, so the kernel is zero.

The ideal defining $\mathcal G\cap\mathcal V$ in the Noetherian
scheme $\mathcal Y$ is finitely generated. Its stalk at $e$ is zero,
so it vanishes on an open neighborhood $\mathcal U$ of $e$.
On this neighborhood all three schemes coincide.
In particular, $\mathcal U$ is an open neighborhood of $e$
in $\mathcal H$ contained in $\mathcal G$.

The morphism $\operatorname{Spec}T\to\mathcal H$ defined by $D$
sends the closed point to $e$.
The inverse image of $\mathcal U$ is an open subset containing
that closed point, hence is all of $\operatorname{Spec}T$.
Thus $D\in\mathcal G(T)$.
The Noetherian argument takes place over $A$, not over $T$.
\end{proof}

\subsection{Correction and descent}
\label{subsec:transported-Chevalley-bracket}
\label{subsec:correction-of-transported-bracket}
\label{subsec:descent-of-correction}

We now combine these results.
The formulation below uses only the normalized finite family,
so it applies both to the automorphism under consideration and
to the homomorphism normalizations described at the end of
Section~4.

\begin{proposition}[Correction and descent]
\label{prop:descent-of-correction}
Suppose
\[
 C_0\in\operatorname{GL}_{78}(R)^\sigma,\qquad
 C_0\equiv I_{78}\pmod{M_{78}(J)},
\]
and every matrix
\[
 a_\lambda=C_0^{-1}x_\lambda(1)C_0
 \qquad(\lambda\in\Phi_\rho)
\]
preserves the Chevalley bracket.
There is a unique $C\in\mathcal G(R)$ satisfying
\[
 Ca_\lambda C^{-1}=x_\lambda(1)
 \qquad(\lambda\in\Phi_\rho).
\]
Moreover,
\[
 C=(r\pi_++z\pi_-)C_0
   \in G_{\mathrm{ad},\sigma}(E_6,R),\qquad
 C\equiv I_{78}\pmod{M_{78}(J)}
\]
for units $r,z\in(1+J)\cap R_0$.
\end{proposition}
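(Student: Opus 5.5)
The plan is to analyse the transported bracket $\{u,v\}_{C_0}=C_0\{C_0^{-1}u,C_0^{-1}v\}$ with the invariant-tensor lemma, correct it by an element of the frame centralizer, and then use the local bracket stabilizer together with the trivial frame centralizer in the adjoint group.

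Since each $a_\lambda$ preserves the standard bracket, $\{\ ,\ \}_{C_0}$ is invariant under every $x_\lambda(1)$. By Lemma~\ref{lemma:invariant-alternating-tensors}, applied over $S=R$,
\[
 \{u,v\}_{C_0}=aB_{++}(u,v)+bB_{+-}(u,v)+cB_{--}(u,v)
\]
for some $a,b,c\in R$. Reducing modulo $J$ and using $C_0\equiv I_{78}$ gives $a,b,c\in1+J$. Because $C_0$ and $Q$ commute with $\sigma$ and the standard tensors are defined over $A$, applying $\sigma$ shows $a,b,c\in R_0$.

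The Jacobi identity for $\{\ ,\ \}_{C_0}$ constrains $a,b,c$. I would evaluate it on a triple of the form $u_+,v_-,w_-$, for instance built from $e_{\alpha_1}\pm e_{\alpha_6}$ and their negatives, so that the standard Jacobi terms split into pieces of different types. This should give a relation $ac=b^2$ and leave two free coefficients. I expect this is the meaning of ``two scalar coefficients'' in the outline.

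Next I would look for a correction $K=r\pi_++z\pi_-$ with $\{Ku,Kv\}_{C_0}=K\{u,v\}_{C_0}$ after transport, that is, with $KC_0$ preserving the standard bracket. Computing componentwise shows that $KC_0$ transports the bracket to coefficients $(a/r,\ b/z,\ cr/z^2)$. We need all three equal to $1$, so $r=a$ and $z=b$, and the remaining condition $c\,a=b^2$ is exactly the Jacobi relation.

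If the Jacobi identity instead forces a weaker relation, say one needing a square root, I would pass to a faithfully flat local extension $R'=R[y]/(y^2-\,\cdot\,)$, localised at a maximal ideal. This is possible because the relevant element lies in $1+J$ and $2$ is invertible. I would solve for $r,z$ over $R'$ and keep them in $1+\mathfrak n$.

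In either case $D:=KC_0$ preserves the bracket and is congruent to $I_{78}$ modulo the maximal ideal. Proposition~\ref{prop:local-bracket-stabilizer} then gives $D\in\mathcal G(R')$. Since $K$ commutes with the frame, $Da_\lambda D^{-1}=Kx_\lambda(1)K^{-1}=x_\lambda(1)$.

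Uniqueness over any coefficient ring follows from Lemma~\ref{lemma:frame-centralizer}. If $D,D'$ both work, then $D'D^{-1}$ centralizes $\mathcal F$ and lies in $\mathcal G$, so it equals $I_{78}$.

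For descent, the two pullbacks of $D$ to $R'\otimes_RR'$ both conjugate the $a_\lambda$ to the frame, so uniqueness makes them equal. Since $\mathcal G$ is affine, faithfully flat descent gives $D\in\mathcal G(R)$. The scalars $r,z$ then lie in $R$, since $D C_0^{-1}=r\pi_++z\pi_-$ has entries in $R$, and they are in $1+J$.

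Finally, $\sigma(D)=Q\overline DQ^{-1}$ again lies in $\mathcal G(R)$, because $Q$ preserves the bracket and normalizes $\mathcal G$. It also conjugates $\sigma(a_\lambda)=a_\lambda$ to $\sigma(x_\lambda(1))=x_\lambda(1)$, so uniqueness gives $\sigma(D)=D$. Hence $D\in G_{\mathrm{ad},\sigma}(E_6,R)$, and consequently $r,z\in R_0$.

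The main obstacle is the Jacobi step. It requires the precise relation among $a,b,c$, and it requires showing that the correction equations are solvable, over $R$ or over a faithfully flat local extension, with solutions in $1+\mathfrak n$. Over $R$ itself the solution $r=a$, $z=b$ suffices once $ac=b^2$ is established.
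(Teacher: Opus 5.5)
Your architecture (invariant tensors, Jacobi, a correction $K=r\pi_++z\pi_-$, the local bracket stabilizer, descent by uniqueness) is the paper's. The correction step, however, fails because of a miscomputation.

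If $\{\ ,\ \}_{C_0}=aB_{++}+bB_{+-}+cB_{--}$, then $\{u,v\}_{KC_0}=K\{K^{-1}u,K^{-1}v\}_{C_0}$ has coefficients $(a/r,\ b/r,\ cr/z^2)$, not $(a/r,\ b/z,\ cr/z^2)$. The reason is that $B_{+-}$ sends $L^+\times L^-$ into $L^-$, so the inputs contribute $r^{-1}z^{-1}$ and the output contributes $z$.

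Nor does Jacobi give $ac=b^2$. It holds automatically on triples of types $(+,+,+)$ and $(-,-,-)$. On $(u_+,v_+,w_-)$ it gives $b(a-b)\{\{u_+,v_+\},w_-\}=0$, and on your type $(u_+,v_-,w_-)$ it gives $c(a-b)\{\{v_-,w_-\},u_+\}=0$. For $u_+=h_1+h_6$, $v_+=e_{\alpha_1}+e_{\alpha_6}$, $w_-=e_{-\alpha_1}-e_{-\alpha_6}$ the double bracket is $2(h_1-h_6)$. Hence $b=a$, while $c$ is unconstrained.

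In fact $ac=b^2$ is false in general. Take $C_0=r_0\pi_++z_0\pi_-$ with $r_0\ne z_0$ in $(1+J)\cap R_0$; it satisfies all hypotheses and gives coefficients $(1/r_0,1/r_0,r_0/z_0^2)$. Your choice $r=a$, $z=b$ then yields $KC_0=\pi_++(z_0/r_0)\pi_-$, which does not preserve the bracket.

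Your fallback cannot repair this while the transport formula is wrong, since you would still need the false relation $ac=b^2$. With the correct formula, the conditions are exactly $r=a=b$, which Jacobi supplies, and $z^2=ac$. Since $R$ is not assumed Henselian, you cannot extract $\sqrt{ac}$ for $ac\in 1+J$ inside $R$.

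So the faithfully flat local extension $S=(R[Z]/(Z^2-ac))_{(J,Z-1)}$ is the essential step, not a contingency:
\begin{itemize}
\item take $z$ to be the image of $Z$, so $z\equiv 1$ modulo the maximal ideal;
\item apply Proposition~\ref{prop:local-bracket-stabilizer} over $S$;
\item descend.
\end{itemize}
That $z$ actually lies in $R$ is an output of the descent, not an input. Once $K=DC_0^{-1}$ is known to be defined over $R$, read off the $h_1$-coefficient of $K(h_1-h_6)=z(h_1-h_6)$.

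With this corrected, the rest of your outline is sound. Uniqueness follows from Lemma~\ref{lemma:frame-centralizer}, and descent follows by comparing the two pullbacks to $S\otimes_RS$. Your argument for $\sigma(D)=D$ by uniqueness is a harmless variant of the paper's direct check that $K$ and $C_0$ are $Q$-symmetric.
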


\begin{proof}
\smallskip
\noindent\emph{1. Determine the transported bracket.}
Put
\[
 B(u,v)=\{u,v\}_{C_0}
       :=C_0\{C_0^{-1}u,C_0^{-1}v\},\qquad
 \sigma_L(u)=Q\overline u.
\]
This is an alternating $R$-bilinear Lie bracket.
The assumptions on $C_0$ and $a_\lambda$ give
\begin{equation}
\label{eq:transported-bracket-properties}
\begin{aligned}
 B(u,v)&\equiv\{u,v\}\pmod{JL_R},\\
 B(x_\lambda(1)u,x_\lambda(1)v)
   &=x_\lambda(1)B(u,v)
       &&(\lambda\in\Phi_\rho),\\
 B(\sigma_Lu,\sigma_Lv)&=\sigma_LB(u,v).
\end{aligned}
\end{equation}
The last identity uses $C_0\sigma_L=\sigma_LC_0$, not ordinary
commutation with $Q$.

Lemma~\ref{lemma:invariant-alternating-tensors}, together with
reduction and semilinear compatibility, yields
\[
 B=rB_{++}+sB_{+-}+tB_{--},
 \qquad r,s,t\in(1+J)\cap R_0.
\]
Jacobi on $u_+,v_+\in L_R^+$ and $w_-\in L_R^-$ gives
\[
 s(s-r)\,\bigl\{\{u_+,v_+\},w_-\bigr\}=0.
\]
For
\[
 u_+=h_1+h_6,\qquad
 v_+=e_{\alpha_1}+e_{\alpha_6},\qquad
 w_-=e_{-\alpha_1}-e_{-\alpha_6},
\]
the last bracket is $2(h_1-h_6)$.
Since $2s$ is a unit, $s=r$. Thus
\begin{equation}
\label{eq:two-bracket-coefficients}
 \{\ ,\ \}_{C_0}=rB_{++}+rB_{+-}+tB_{--},
 \qquad r,t\in(1+J)\cap R_0.
\end{equation}

\smallskip
\noindent\emph{2. Correct the bracket over a local extension.}
Set
\[
 R'=R[Z]/(Z^2-rt),\qquad
 \mathfrak m=(J,Z-1)R',\qquad
 S=R'_{\mathfrak m},\qquad \mathfrak n=\mathfrak mS,
\]
and let $z$ be the image of $Z$.
Since $rt\equiv1\pmod J$, the ideal $\mathfrak m$ is maximal and
$S/\mathfrak n\simeq R/J$.
The algebra $R'$ is free of rank two over $R$, so $R\to S$ is
flat and local, hence faithfully flat.
The involution extends by $\overline Z=Z$ and preserves
$\mathfrak m$. Consequently,
\[
 z\in S_0^\times,\qquad z\equiv1\pmod{\mathfrak n},\qquad z^2=rt.
\]

Put
\[
 K=r\pi_++z\pi_-,\qquad C=KC_0.
\]
The matrix $K$ is $Q$-symmetric, congruent to the identity, and
centralizes the frame because it is a polynomial in $Q$.
Transporting~\eqref{eq:two-bracket-coefficients} once more gives
\[
 \{\ ,\ \}_{C}
   =B_{++}+B_{+-}+\frac{rt}{z^2}B_{--}
   =\{\ ,\ \}.
\]
Thus $C\in\mathcal H(S)$ and
$C\equiv I_{78}\pmod{M_{78}(\mathfrak n)}$.
Proposition~\ref{prop:local-bracket-stabilizer} gives
$C\in\mathcal G(S)$; its $Q$-symmetry gives
$C\in G_{\mathrm{ad},\sigma}(E_6,S)$.
Since $K$ centralizes the frame,
\[
 Ca_\lambda C^{-1}=x_\lambda(1)
 \qquad(\lambda\in\Phi_\rho).
\]

\smallskip
\noindent\emph{3. Descend the correction.}
Let $C_1,C_2$ be the two pullbacks of $C$ to $S_2=S\otimes_R S$.
Both have the same conjugacies with the matrices $a_\lambda$,
which are defined over $R$.
Hence $C_2C_1^{-1}\in\mathcal G(S_2)$ centralizes $\mathcal F_{S_2}$.
Lemma~\ref{lemma:frame-centralizer} gives $C_1=C_2$, without
requiring $S_2$ to be local.
The fpqc sheaf property
\cite[Tag 023Q]{StacksProject} therefore descends $C$ uniquely
to $\mathcal G(R)$.

It follows that $K=CC_0^{-1}$ is defined over $R$.
Over $S$,
\[
 K(h_1-h_6)=z(h_1-h_6),
\]
so its $h_1$-coefficient shows that $z$ also comes from $R$.
Faithful flatness descends the identities
$\overline z=z$ and $z^2=rt$.
The latter makes $z$ a unit, and
$\mathfrak n\cap R=J$ gives $z-1\in J$.
Thus $K$ and $C$ are $Q$-symmetric and congruent to the identity
over $R$, proving the stated form of $C$.

Finally, if $C'\in\mathcal G(R)$ has the same frame conjugacies,
then $C'C^{-1}$ centralizes the frame.
Lemma~\ref{lemma:frame-centralizer} gives $C'=C$.
\end{proof}

The correction is compatible with arbitrary equivariant base change.
Indeed, for an equivariant ring homomorphism $f:R\to T$,
the matrix $f_*(C)$ satisfies the base-changed frame conjugacies
and is their unique solution in $\mathcal G(T)$ by
Lemma~\ref{lemma:frame-centralizer}.
No locality assumption on $T$ is needed.
This concerns the finite matrix family, not an extension of the
abstract homomorphism to a group over $T$.

For the matrix $C_0$ constructed in Section~4, the hypotheses hold
because
\[
 a_\lambda=\varphi_1(x_\lambda(1))\in E.
\]
As $E$ is normal in $G_{\mathrm{ad},\sigma}(E_6,R)$, the map
\[
 \psi:=i_C\circ\varphi_1=i_K\circ\psi_0
\]
is now an automorphism of $E$.
It fixes every $x_\lambda(1)$ and hence all their Weyl and diagonal
words. In Section~6 we determine its action on root elements
with arbitrary parameters.


\section{Root parameters and completion}
\label{sec:root-parameters-and-completion}

We recover the coefficient map of a homomorphism fixing the
standard frame. Applying this to the normalized automorphism
$\psi=i_C\circ\varphi_1$ from
Proposition~\ref{prop:descent-of-correction} completes the adjoint
classification. We then treat the simply connected form and finish
with relative normalization for homomorphisms with a local target.

For a commutative ring $U$ with involution and $2\in U^\times$, put
\[
 E(U):=E'_{\mathrm{ad},\tau\theta_U}(E_6,U),\qquad
 G(U):=G_{\mathrm{ad},\tau\theta_U}(E_6,U),\qquad
 U^-:=\{u\in U\mid\overline u=-u\}.
\]
Thus $U=U_0\oplus U^-$; the symbols $E,G$ without arguments
continue to refer to $R$.

\subsection{Two root-recognition calculations}
\label{subsec:long-root-parameters}
\label{subsec:root-recognition}

We recognize long-root elements and short-root elements with skew
parameters inside the adjoint group by commutation with finite
families of constant matrices. Bracket preservation eliminates
the extra solutions of the linear commutation equations.
Retain $A=\mathbb Z[1/2]$ and
$\mathcal G=\mathbf G_{\mathrm{ad}}(E_6)$ from Section~5. Set
\[
 \alpha=\alpha_4=e_2-e_3,\qquad
 \beta=\alpha_2+\alpha_3,\qquad
 \tau\beta=\alpha_3+\alpha_5,\qquad [\beta]=e_3,
\]
\[
 N_\alpha=\operatorname{ad}e_\alpha,\qquad
 N_-=\operatorname{ad}e_\beta-\operatorname{ad}e_{\tau\beta}.
\]
Both operators have cube zero. Over any $A$-algebra $T$, put
\[
\begin{aligned}
 U_\alpha(c)&=x_\alpha(c)
   =I_{78}+cN_\alpha+\frac{c^2}{2}N_\alpha^2,\\
 U_-(c)&=x_\beta(c)x_{\tau\beta}(-c)
   =I_{78}+cN_-+\frac{c^2}{2}N_-^2.
\end{aligned}
\]
The second formula uses orthogonality of $\beta,\tau\beta$.
With an involution on $T$, these are twisted root elements
precisely for $c\in T_0$ and $c\in T^-$, respectively.

For $N=N_\alpha$ or $N_-$, define, with $\mu,\lambda\in\Phi_\rho$,
\[
 \mathcal S_N=
 \{h_\mu(-1)\mid h_\mu(-1)N=Nh_\mu(-1)\}
 \ \cup
 \{x_\lambda(1)\mid x_\lambda(1)N=Nx_\lambda(1)\}.
\]
These families are selected over $A$ and then base changed;
additional equalities over $T$ do not enlarge them.
Every member commutes with the corresponding $U_\alpha(c)$ or
$U_-(c)$ for all $c$.

\begin{lemma}[Root recognition]
\label{lemma:long-root-rigidity}
\label{lemma:skew-root-rigidity}
For every commutative $A$-algebra $T$,
\[
\begin{aligned}
 \operatorname{Cent}_{\mathcal G(T)}(\mathcal S_{N_\alpha})
   &=\{U_\alpha(c)\mid c\in T\},\\
 \operatorname{Cent}_{\mathcal G(T)}(\mathcal S_{N_-})
   &=\{U_-(c)\mid c\in T\}.
\end{aligned}
\]
The parameter is unique in both cases. If $T$ has an involution
and a recognized element $Y$ belongs to $G(T)$, its parameter lies
in $T_0$ in the first case and in $T^-$ in the second.
\end{lemma}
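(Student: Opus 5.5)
The plan is to follow the pattern of Lemma~\ref{lemma:frame-centralizer}: first compute a linear centralizer, then let bracket preservation select the group elements. Fix $N$ equal to $N_\alpha$ or $N_-$, and let $Y\in\mathcal G(T)$ centralize $\mathcal S_N$. The inclusion $\supseteq$ is already noted before the statement. Uniqueness of the parameter is immediate, because the coefficient of $e_\alpha$ in $U_\alpha(c)h$ is $-c\,\alpha(h)$ and we may choose $h$ with $\alpha(h)=1$. The analogous coordinate for $U_-(c)$ applied to $h_2$ recovers $c$ up to a unit. Thus everything rests on $\subseteq$.

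\emph{Step 1: linear centralizer over $A$.} Over $A$, solve the linear system $Zs=sZ$ $(s\in\mathcal S_N)$ for $Z\in M_{78}$. The diagonal members force $Z$ into the block form recorded in Subsection~\ref{subsec:initial-root-decompositions}; for $N_\alpha$ this is the $836$-coordinate pattern. The commuting $x_\lambda(1)$ then cut this down further. I expect an exact computation, in the style of Lemma~\ref{lemma:invariant-alternating-tensors}, to show the solution module is free with an explicit basis containing $I$, $N$ and $N^2$, and possibly a few further nilpotent or projection-type operators. These extras would be analogues of $\pi_\pm$ and of the $\partial_i$ that vanish on the relevant roots. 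The rank claim would be certified by an invertible square subsystem over $A$, so the description commutes with arbitrary base change to $T$. The same argument as in Lemma~\ref{lemma:invariant-alternating-tensors} then needs no flatness.

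\emph{Step 2: bracket preservation.} Write $Y=\sum_j c_jZ_j$ in this basis, with $c_j\in T$, and impose $Y\{u,v\}=\{Yu,Yv\}$ on a handful of test vectors. On pairs such as $(h,e_{-\alpha})$ and $(e_{-\alpha},e_{\alpha})$, these quadratic equations should do three things. They should force the coefficient of $I$ to be $1$, after an idempotent-splitting argument as in Lemma~\ref{lemma:frame-centralizer} that uses $Y\in\mathcal G(T)$ to exclude $Q$-like factors. They should kill the extra basis elements. Finally they should force the $N^2$-coefficient to be $c^2/2$, where $c$ is the $N$-coefficient. Then $Y=U(c)$.

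This is the main obstacle. The extra linear solutions must be eliminated without any locality or reducedness of $T$, so the equations have to be genuinely solvable, not merely nilpotent. I would first try to arrange that each extra coefficient appears linearly, multiplied by a unit (a power of $2$), in some bracket identity. If that fails, I would use the derivation computation of Lemma~\ref{lemma:bracket-stabilizer-tangent} to identify which extra operators are derivations, and handle those via the torus.

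\emph{Step 3: parameters in the twisted case.} If $Y\in G(T)$, then $\sigma(Y)=Y$. Since $\sigma(N_\alpha)=N_\alpha$ and $\sigma(N_-)=-N_-$, we get $\sigma(U_\alpha(c))=U_\alpha(\overline c)$ and $\sigma(U_-(c))=U_-(-\overline c)$. Uniqueness of the parameter then gives $\overline c=c$ in the first case and $\overline c=-c$ in the second.
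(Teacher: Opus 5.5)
Your architecture is the paper's: exact elimination over $A$ for the linear centralizer, bracket identities with unit coefficients, the idempotent splitting with $Q\notin\mathcal G(bT)$ removing the graph factor, and the $\sigma$-argument for the parameter. Steps~2 and~3 are essentially the paper's. Step~1, however, makes a claim that is false in the long-root case.

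\emph{The gap.} You assert that the linear centralizer of $\mathcal S_{N_\alpha}$ is a free module whose rank is certified by an invertible square subsystem over $A$, so that the description commutes with every base change. This fails. Put $h'=-h_1+h_2-h_5+h_6$. Then $\alpha_j(h')\in\{0,\pm3\}$ for every simple root, so $h'$ is central in $L_T$ whenever $3T=0$. Let $T_\alpha$ send $e_{-\alpha}\mapsto h'$ and all other basis vectors to $0$. Substituting $T_\alpha$ into the commutation equations leaves exactly the condition $\pm3u=0$.
\begin{itemize}
\item Over $A$, the centralizer is free of rank $5$, spanned by $I,Q,N_\alpha,QN_\alpha,N_\alpha^2$.
\item Over $\mathbb F_3$, it has rank $6$.
\end{itemize}
Hence every $831\times831$ minor of the $836$-coordinate system is divisible by $3$, so no unit certificate of the kind you describe exists. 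The $A$-description does not base change to rings with $3$-torsion. Over a general $T$, the correct form is
\[
aI+bQ+cN_\alpha+dQN_\alpha+eN_\alpha^2+uT_\alpha,\qquad 3u=0 .
\]
The paper certifies this by a unit $830$-minor that leaves six free coordinates. If Step~2 starts from your rank-$5$ ansatz, the proof silently misses solutions in residue characteristic three, a case the theorem explicitly includes.

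\emph{The repair.} Keep the term $uT_\alpha$ and kill it with a bracket identity in which $u$ has a unit coefficient. Write $\Delta_Y(v,w)=Y\{v,w\}-\{Yv,Yw\}$. Once $a=1$ and $b=0$, the $h_1$-coefficient of $\Delta_Y(h_3,e_{-\alpha_4})$ is $-u$, so $u=0$.

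In the short-root case your expectation is correct. The form $aI+bQ+cN_-+dQN_-+pN_-^2+\ell QN_-^2+uT_-$ is free of rank $7$ after every base change, and $u$ is killed by an identity giving $2u=0$.

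\emph{A smaller detail in Step~2.} From $a^2=a$ and $b^2=b$ you reach $a+b=1$ only via $(a-b)(a+b-1)=0$, which needs $a-b$ to be a unit. The paper gets this from $\det Y=(a+b)^{52}(a-b)^{26}$, read off from the grading by $h_\alpha$ (respectively $h_\beta+h_{\tau\beta}$).
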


\begin{proof}
\smallskip
\noindent\emph{1. Solve the linear commutation equations.}
The diagonal decompositions of
Subsection~\ref{subsec:initial-root-decompositions} leave $836$
coordinates in the long-root case and $788$ in the short-root case.
Each family contains $33$ constant root matrices. Exact elimination
over $A$ gives, respectively,
\begin{align}
 Y&=aI_{78}+bQ+cN_\alpha+dQN_\alpha+eN_\alpha^2+uT_\alpha,
       \qquad 3u=0,
       \label{eq:long-recognition-linear}\\
 Y&=aI_{78}+bQ+cN_-+dQN_-+pN_-^2+\ell QN_-^2+uT_-.
       \label{eq:skew-recognition-linear}
\end{align}
Here $T_\alpha$ is zero except for
\[
 T_\alpha(e_{-\alpha})=-h_1+h_2-h_5+h_6,
\]
and the integral matrix $T_-$ is normalized by
\[
\begin{aligned}
 T_-(h_1)&=3(e_\beta-e_{\tau\beta}),\\
 T_-(e_{-\beta})&=2h_1+h_2-h_5-2h_6,\\
 T_-(e_{-\tau\beta})&=-2h_1-h_2+h_5+2h_6.
\end{aligned}
\]
The certificates select square subsystems of sizes $830$ and $781$
with determinants in $A^\times$.
The displayed model matrices solve these selected equations, and
the matrices taking their coefficients to the six or seven free
coordinates have determinants $-4$ and $-8$, respectively.
Thus the selected equations force the displayed forms, with unique
coefficients, after arbitrary base change.
In the long-root case, substitution in the remaining equations
gives exactly $\pm3u=0$; in the short-root case all remaining
equations vanish. In particular, the term $uT_\alpha$ cannot yet
be removed.

The systems, coordinate maps, and selected minors are in
\texttt{sections/section6\_parameter\_rigidity/}
of \texttt{twisted\_e6\_verification.zip}.
The full matrix $T_-$ is stored in \texttt{data/short\_T\_minus.npz}.
The script \texttt{compute\_certificate.py} reconstructs both
systems and checks the bracket identities used below.

\smallskip
\noindent\emph{2. Impose bracket preservation.}
For either linear form,
\begin{equation}
\label{eq:recognition-determinant}
 \det Y=(a+b)^{52}(a-b)^{26}.
\end{equation}
Indeed, use the integer grading defined by $h_\alpha$, or by
$h_\beta+h_{\tau\beta}$, before base change. The matrix $Q$
preserves degrees, while the terms involving $N,T_\alpha,T_-$
raise degree by two and those involving $N^2$ by four.
Thus the diagonal part is $aI_{78}+bQ$.
Invertibility of $Y$ makes $a+b$ and $a-b$ units.

Write $\Delta_Y(v,w)=Y\{v,w\}-\{Yv,Yw\}$.
In~\eqref{eq:long-recognition-linear}, the $e_{\alpha_1}$- and
$e_{\alpha_6}$-coefficients of $\Delta_Y(h_1,e_{\alpha_1})$ are
$2a(1-a)$ and $2b(1-b)$.
In~\eqref{eq:skew-recognition-linear}, the $h_1$-, $e_{-\beta}$-,
and $e_{-\tau\beta}$-coefficients of
$\Delta_Y(h_1,e_{-\beta})$ are $2u$, $a(1-a)$, and $b(1-b)$.
Hence $a^2=a$, $b^2=b$ in both cases, and $u=0$ in the short-root
case. Now
\[
 (a-b)(a+b-1)=0
\]
gives $a+b=1$, so $a,b$ are complementary idempotents and
$T=aT\times bT$.

First work on $aT$, where $a=1$, $b=0$.
In the long-root case, the $e_{\alpha_2+\alpha_3+\alpha_4}$-coefficient
of $\Delta_Y(h_1,e_{\alpha_3+\alpha_5})$ is $-d$, and the
$h_1$-coefficient of $\Delta_Y(h_3,e_{-\alpha_4})$ is $-u$.
Thus $d=u=0$.
The $e_{\alpha_4}$-coefficient of
$\Delta_Y(h_4,e_{-\alpha_4})$ is then $4(2e-c^2)$.
Thus $e=c^2/2$ and $Y=U_\alpha(c)$.

In the short-root case, the $e_\beta$-coefficient of
$\Delta_Y(h_1,e_{-\beta})$, followed by the $h_5$- and
$e_\beta$-coefficients of $\Delta_Y(h_1,e_{-\tau\beta})$, gives
\[
 2(c^2-2p)=0,\qquad -d=0,\qquad -2(cd+\ell)=0.
\]
Hence $p=c^2/2$, $d=\ell=0$, and $Y=U_-(c)$.
Only powers of two have been cancelled.

\smallskip
\noindent\emph{3. Exclude the graph factor and recover the parameter.}
On $bT$, apply the preceding calculations to $QY$.
The matrix $Q$ preserves the bracket and commutes with
$\mathcal S_N$, and multiplication by $Q$ interchanges the
coefficients $a,b$. Thus $QY=U_\alpha(c')$ or $U_-(c')$ on
this factor. Since both $Y$ and the resulting root element
belong to $\mathcal G(bT)$, a nonzero factor $bT$ would give
$Q\in\mathcal G(bT)$, contrary to
Lemma~\ref{lemma:frame-centralizer}.
Therefore $b=0$, proving the asserted forms.
The reverse inclusions follow from the definition of $\mathcal S_N$.

The root-coordinate formulas make each parameter unique. Finally,
\[
 \sigma(U_\alpha(c))=U_\alpha(\overline c),\qquad
 \sigma(U_-(c))=U_-(-\overline c).
\]
For $Y\in G(T)$, uniqueness gives $c\in T_0$ or $c\in T^-$,
respectively.
\end{proof}

\subsection{Recovery of the coefficient homomorphism}
\label{subsec:short-root-parameters}
\label{subsec:coefficient-recovery}

Root recognition gives additive parameter maps. The $B_2$
relations identify them across root lengths, and a short $A_2$
relation recovers multiplication.

\begin{proposition}[Homomorphisms fixing the frame]
\label{prop:fixed-frame-homomorphisms}
Let $S,T$ be commutative rings with involution and with $2$
invertible. If a homomorphism
\[
 \Psi:E(S)\longrightarrow G(T)
\]
satisfies $\Psi(x_\lambda(1))=x_\lambda(1)$ for every relative
root, then there is a unique unital equivariant ring homomorphism
$\rho:S\to T$ such that
\[
 \Psi(x_\lambda(t))=x_\lambda(\rho(t))
\]
for every admissible parameter. In particular, $\Psi=\rho_*$ and
$\Psi(E(S))\subseteq E(T)$.
If $\Psi:E(S)\to E(T)$ is an isomorphism, then $\rho$ is an
isomorphism of rings with involution.
\end{proposition}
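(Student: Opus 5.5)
We recover the coefficient map root by root, using the root-recognition Lemma~\ref{lemma:long-root-rigidity} and then the commutator relations. First take the long root $\alpha=e_2-e_3$ and $s\in S_0$. Every member of $\mathcal S_{N_\alpha}$ lies in the frame or is a diagonal word in it, and commutes with $x_\alpha(s)$; since $\Psi$ fixes the frame, $\Psi(x_\alpha(s))\in G(T)$ commutes with $\mathcal S_{N_\alpha}$. The lemma then gives $\Psi(x_\alpha(s))=U_\alpha(c)$ for a unique $c=:\rho_\ell(s)\in T_0$. Additivity of $\rho_\ell$ follows from $x_\alpha(s)x_\alpha(s')=x_\alpha(s+s')$ and uniqueness of parameters, and $\rho_\ell(1)=1$. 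Similarly, for the short root $[\beta]=e_3$ and skew $c\in S^-$, the element $x_{[\beta]}(c)$ equals $U_-(c)$ in the convention $x_\beta(c)x_{\tau\beta}(\overline c)$. The second case of the lemma gives an additive map $\rho_-:S^-\to T^-$. Weyl conjugation by fixed $w_\lambda(1)$, with the signs recorded in Lemma~\ref{lemma:signed-transition-blocks}, transports these maps to every long root and to the skew part of every short root. The same maps occur because the transition signs are parameter-independent; a sign $-1$ is absorbed by inversion.

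Next we define the short-root map on all of $R$-type parameters. Write $t=t_0+t_-$ with $t_0=(t+\overline t)/2\in S_0$. The fixed part must be handled without a recognition lemma. I would use the $B_2$ subsystem generated by $[\alpha_3]$ and $[\alpha_2]$ and the Chevalley commutator formula. There $[x_{\mathrm{short}}(t),x_{\mathrm{long}}(1)]$ equals a product of a short-root element with parameter $\pm t$ and a long-root element with parameter $\pm t\overline t$, or its analogue. Alternatively, two short roots $\gamma,\delta$ with $\gamma+\delta$ long satisfy $[x_\gamma(t),x_\delta(1)]=x_{\gamma+\delta}(\pm(t+\overline t))$ (the $B_2$ short--short commutator). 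Applying $\Psi$ to the latter shows that $\Psi(x_\gamma(t))$ has a well-defined trace part: its commutator with the fixed $x_\delta(1)$ is $x_{\gamma+\delta}(\pm\rho_\ell(t+\overline t))$. To actually identify $\Psi(x_\gamma(t_0))$, I would use the mixed commutator $[x_{\mathrm{long}}(s),x_{\mathrm{short}}(1)]=x_{\mathrm{short}}(\pm s)\,x_{\mathrm{long}'}(\pm s)$ for $s\in S_0$. This gives $\Psi(x_{\mathrm{short}}(s))$ as $[U(\rho_\ell(s)),x_{\mathrm{short}}(1)]\cdot U'(\rho_\ell(s))^{-1}$, hence equal to $x_{\mathrm{short}}(\pm\rho_\ell(s))$. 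Then set $\rho(t)=\rho_\ell(t_0)+\rho_-(t_-)$. This is additive and equivariant, since $\rho_\ell$ lands in $T_0$ and $\rho_-$ in $T^-$, and it satisfies $\Psi(x_\mu(t))=x_\mu(\rho(t))$ for all short $\mu$. A consistency check with the $B_2$ relation on products $x_{\mathrm{short}}(t)x_{\mathrm{short}}(u)$ shows that the long map restricted to $S_0$ agrees with $\rho$.

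Multiplicativity comes from a short $A_2$ relation, e.g. for $[\alpha_1],[\alpha_2]$: $[x_{[\alpha_1]}(t),x_{[\alpha_2]}(u)]=x_{[\alpha_1]+[\alpha_2]}(\pm t u)$. Here the sign and the possible $\overline u$ are fixed by the block conventions of Section~1. Applying $\Psi$ gives $\rho(tu)=\rho(t)\rho(u)$; if the relation involves $t\overline u$, substitute $\overline u$ and use equivariance. Uniqueness of $\rho$ is clear from uniqueness of root parameters. Since $E(S)$ is generated by root elements, $\Psi=\rho_*$ and the image lies in $E(T)$. If $\Psi$ is an isomorphism $E(S)\to E(T)$, then $\Psi^{-1}$ also fixes the frame, so it yields $\rho'$ with $\rho'\rho=\mathrm{id}$ and $\rho\rho'=\mathrm{id}$ by uniqueness.

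The main obstacle is the second step, recognizing $\Psi(x_\mu(t))$ for fixed (non-skew) short parameters. No recognition lemma covers this case, so the argument must go through the mixed $B_2$ commutator formula with correct signs and $t\leftrightarrow\overline t$ bookkeeping. I would first verify the precise $B_2$ commutator formulas on the explicit block $B_4(t)$ of Section~1 and pick the root pair that isolates a single short factor.
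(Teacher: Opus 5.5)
Your proposal is correct and follows essentially the paper's route. You recognize long-root and skew short-root elements via Lemma~\ref{lemma:long-root-rigidity}, and you obtain fixed short parameters from the mixed $B_2$ commutator $[x_{e_3-e_4}(s),x_{e_4}(1)]=x_{e_3}(\pm s)\,x_{e_3+e_4}(\pm s)$ after cancelling the recognized long factor. You then define $\rho$ through the fixed--skew decomposition, get multiplicativity from the short $A_2$ relation for $\beta_0,e_4$, and get bijectivity from $\Psi^{-1}$. The detour through the short--short commutator is unnecessary, since, as you note, it only detects $t+\overline t$, and the final consistency check is automatic because $\rho|_{S_0}=\rho_\ell$ by definition.
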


\begin{proof}
The Weyl and diagonal words in the frame are also fixed.
Write $X_\lambda(t)=\Psi(x_\lambda(t))$.

\smallskip
\noindent\emph{Long roots.}
For $s\in S_0$, the element $x_\alpha(s)$ commutes with
$\mathcal S_{N_\alpha}$. Applying $\Psi$ and
Lemma~\ref{lemma:long-root-rigidity} gives
\[
 X_\alpha(s)=x_\alpha(\rho_0(s)),\qquad \rho_0(s)\in T_0.
\]
Faithful parametrization and the root-subgroup law make $\rho_0$
additive, with $\rho_0(1)=1$. Fixed Weyl conjugation gives the
same map on every long root.

\smallskip
\noindent\emph{Short roots.}
For fixed signs $\varepsilon_1,\varepsilon_2\in\{1,-1\}$,
\[
 [x_{e_3-e_4}(s),x_{e_4}(1)]
   =x_{e_3}(\varepsilon_1s)
      x_{e_3+e_4}(\varepsilon_2s)
 \qquad(s\in S_0).
\]
Apply $\Psi$ and compare with the same commutator over $T$.
Cancelling the recognized long-root factor gives
$X_{e_3}(s)=x_{e_3}(\rho_0(s))$ for $s\in S_0$.
For $q\in S^-$, the equality $x_{e_3}(q)=U_-(q)$ and
Lemma~\ref{lemma:skew-root-rigidity} similarly give
\[
 X_{e_3}(q)=x_{e_3}(\rho_-(q)),\qquad \rho_-(q)\in T^-,
\]
where $\rho_-$ is additive. Define
\[
 \rho(t)=\rho_0\!\left(\frac{t+\overline t}{2}\right)
        +\rho_-\!\left(\frac{t-\overline t}{2}\right).
\]
Then $\rho$ is additive and equivariant, with $\rho(1)=1$.
The root-subgroup law gives
$X_{e_3}(t)=x_{e_3}(\rho(t))$ for all $t\in S$.
Weyl transitions replace a parameter by $\pm t$ or
$\pm\overline t$, so the same formula holds on every short root.

\smallskip
\noindent\emph{Multiplication and uniqueness.}
In the short $A_2$ subsystem generated by $\beta_0,e_4$, the
paired absolute coordinates of Section~1 give
\[
 [x_{\beta_0}(t),x_{e_4}(u)]
   =x_{\beta_0+e_4}(\varepsilon tu)
 \qquad(t,u\in S)
\]
for a fixed sign $\varepsilon$.
Applying $\Psi$ and using faithful parametrization yields
$\rho(tu)=\rho(t)\rho(u)$.
Since the root subgroups generate $E(S)$, we have $\Psi=\rho_*$;
one short-root subgroup determines $\rho$ uniquely.
If $\Psi$ is an isomorphism onto $E(T)$, the same argument applied
to $\Psi^{-1}$ gives $\rho':T\to S$.
Evaluation of the two inverse identities on a short-root subgroup
gives $\rho'\rho=1$ and $\rho\rho'=1$.
\end{proof}

The proposition includes $T=B\times B$ with the exchange
involution. A short-root parameter $(a,b)$ is arbitrary: projection
to the first factor gives $x_\beta(a)x_{\tau\beta}(b)$.
The proof uses the diagonal and antidiagonal parts of $(a,b)$,
not a restriction to $a=b$. Long-root parameters are the
fixed pairs $(c,c)$.

\subsection{The adjoint classification}
\label{subsec:multiplicativity-and-completion}

Coefficient recovery now completes the elementary classification;
characteristicity and the trivial frame centralizer then give the
full-group result.

\begin{theorem}
\label{theorem:automorphisms-of-E}
\label{thm:elementary-adjoint-automorphisms}
Every automorphism of $E=E'_{\mathrm{ad},\sigma}(E_6,R)$ has a
unique expression
\[
 \varphi=i_g\circ\rho_*,\qquad
 g\in G,\quad \rho\in\operatorname{Aut}(R,\theta).
\]
Conversely, every such expression defines an automorphism of $E$.
\end{theorem}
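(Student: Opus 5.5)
Given $\varphi\in\operatorname{Aut}E$, we assemble the normalizations already carried out and then invoke Proposition~\ref{prop:fixed-frame-homomorphisms}.

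\textbf{Existence.} Proposition~\ref{prop:reduction} supplies $\widetilde u\in G$ with $\varphi_1=i_{\widetilde u^{-1}}\circ\varphi$ having standard residual action on the frame. Sections~3--4 then produce $C_0\in\operatorname{GL}_{78}(R)^\sigma$ congruent to the identity with $C_0\varphi_1(x_\lambda(1))C_0^{-1}=x_\lambda(1)$. Since $a_\lambda=\varphi_1(x_\lambda(1))\in E$ preserves the bracket, Proposition~\ref{prop:descent-of-correction} gives $C\in G$ with the same frame conjugacies. Because $E\lhd G$, the map $\psi=i_C\circ\varphi_1$ is an automorphism of $E$ fixing every $x_\lambda(1)$.

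Applying Proposition~\ref{prop:fixed-frame-homomorphisms} with $S=T=R$ and $\Psi=\psi$, viewed as a map $E\to E\subseteq G$, yields an equivariant ring isomorphism $\rho$ with $\psi=\rho_*$; thus $\rho\in\operatorname{Aut}(R,\theta)$. Hence
\[
\varphi=i_{\widetilde u}\circ i_{C^{-1}}\circ\rho_*=i_g\circ\rho_*,\qquad g=\widetilde uC^{-1}\in G.
\]

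\textbf{Uniqueness.} Suppose $i_g\rho_*=i_{g'}\rho'_*$ on $E$. Then $i_{g'^{-1}g}=\rho'_*\rho_*^{-1}$ on $E$. Evaluating on the frame, $h=g'^{-1}g\in G$ satisfies
\[
h\,x_\lambda(1)\,h^{-1}=x_\lambda(1)
\]
for all $\lambda$, since ring automorphisms fix the constant elements. Lemma~\ref{lemma:frame-centralizer} then gives $h=I_{78}$. Consequently $\rho'_*=\rho_*$ on a short-root subgroup, and faithful parametrization forces $\rho=\rho'$.

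\textbf{Converse.} For $\rho\in\operatorname{Aut}(R,\theta)$, the map $\rho_*$ commutes with $\sigma$, so it preserves the $\sigma$-fixed group $G$ and sends relative root elements to relative root elements. It therefore restricts to an automorphism of $E$, with inverse $(\rho^{-1})_*$. Moreover $i_g$ preserves $E$ for $g\in G$ because $E$ is normal in $G$.

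\textbf{Main obstacle.} The only delicate point is confirming that the hypotheses of the cited results hold for the accumulated $C_0$ and for $\psi$. In particular, $\psi$ must map into $G(R)$ with frame fixed, and $E(R)$ in Proposition~\ref{prop:fixed-frame-homomorphisms} must coincide with $E$. Both follow from Subsection~\ref{subsec:accumulated-basis-change}.
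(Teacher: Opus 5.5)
Your proposal is correct and follows the paper's proof essentially step for step. Existence comes from composing the lifted residual conjugator $\widetilde u$ with the corrected element $C\in G$ from Proposition~\ref{prop:descent-of-correction} and then applying Proposition~\ref{prop:fixed-frame-homomorphisms}, which gives $\varphi=i_{\widetilde uC^{-1}}\circ\rho_*$; uniqueness comes from the trivial frame centralizer of Lemma~\ref{lemma:frame-centralizer} followed by evaluation on a short-root subgroup; and the converse comes from normality of $E$ in $G$ together with coefficientwise functoriality.
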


\begin{proof}
Proposition~\ref{prop:fixed-frame-homomorphisms} applied to the
normalized automorphism $\psi$ gives $\psi=\rho_*$ for
$\rho\in\operatorname{Aut}(R,\theta)$.
Since $\varphi_1=i_{\widetilde u^{-1}}\circ\varphi$ and
$\psi=i_C\circ\varphi_1$, we obtain
\[
 \varphi=i_{\widetilde uC^{-1}}\circ\rho_*.
\]
Faithful short-root parametrization also shows that the reduction
of $\rho$ is the automorphism $f$ from
Proposition~\ref{prop:reduction}.

If $i_g\circ\rho_*=i_{g'}\circ\rho'_*$, then $g'^{-1}g$
centralizes the frame, because both ring automorphisms fix it.
Lemma~\ref{lemma:frame-centralizer} gives $g=g'$, and evaluation
on a short-root subgroup gives $\rho=\rho'$.
The converse follows from normality of $E$ in $G$ and
coefficientwise functoriality.
\end{proof}

\begin{corollary}
\label{corollary:automorphisms-of-G}
\label{cor:full-adjoint-automorphisms}
Every automorphism of $G=G_{\mathrm{ad},\sigma}(E_6,R)$ has the
same unique form $i_g\circ\rho_*$, with $g\in G$ and
$\rho\in\operatorname{Aut}(R,\theta)$.
\end{corollary}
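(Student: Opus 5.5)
The plan is to reduce to Theorem~\ref{thm:elementary-adjoint-automorphisms} by restriction to $E$, and then to promote the resulting description from $E$ to $G$ using the trivial frame centralizer.

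\emph{Step 1: restriction and the candidate.} Let $\Phi\in\operatorname{Aut}G$. By Section~2, $E$ is characteristic in $G$. For $\theta\ne\operatorname{id}_R$ this is \cite[Corollary~A.2]{GMNormalSubgroups}, and for trivial $\theta$ we have $E=G$. Hence $\varphi:=\Phi|_E\in\operatorname{Aut}E$. Theorem~\ref{thm:elementary-adjoint-automorphisms} then gives $\varphi=i_g\circ\rho_*$ on $E$, with $g\in G$ and $\rho\in\operatorname{Aut}(R,\theta)$. The map $\rho_*$ acts coefficientwise on $G\subseteq\operatorname{SL}_{78}(R)^\sigma$. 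It preserves $G$ because $G$ is the group of $\sigma$-fixed points of a $\mathbb Z$-scheme and $\rho$ commutes with $\theta$ and hence with $\sigma$. So $\Psi:=(i_g\circ\rho_*)^{-1}\circ\Phi$ is an automorphism of $G$ that restricts to the identity on $E$.

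\emph{Step 2: $\Psi=\operatorname{id}_G$.} Take $y\in G$ and $x\in E$. Since $E\lhd G$, we get
\[
 \Psi(y)\,x\,\Psi(y)^{-1}=\Psi(yxy^{-1})=yxy^{-1}.
\]
Thus $y^{-1}\Psi(y)\in G$ centralizes $E$, and in particular the standard frame $\mathcal F(R)\subseteq E$. Since $G\subseteq\mathcal G(R)$, Lemma~\ref{lemma:frame-centralizer} forces $y^{-1}\Psi(y)=I_{78}$. Therefore $\Phi=i_g\circ\rho_*$ on all of $G$.

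\emph{Step 3: uniqueness and converse.} Suppose $i_g\circ\rho_*=i_{g'}\circ\rho'_*$ on $G$. Restricting to $E$ and using the uniqueness part of Theorem~\ref{thm:elementary-adjoint-automorphisms} gives $g=g'$ and $\rho=\rho'$. Conversely, each $i_g\circ\rho_*$ is an automorphism of $G$: inner conjugation is clearly one, and $\rho_*$ has inverse $(\rho^{-1})_*$.

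The main point to check is that $\rho_*$ really maps $G$ onto $G$. This reduces to the functoriality of the fixed-point group $G_{\mathrm{ad},\sigma}(E_6,-)$ under equivariant ring automorphisms. That functoriality holds because $\mathcal G$ is defined over $\mathbb Z$ and $\sigma$ is $Q$ composed with the involution entrywise, and $\rho$ commutes with both. Everything else is formal given the earlier results.
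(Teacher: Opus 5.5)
Your proposal is correct and follows the paper's proof essentially step for step: you restrict to the characteristic subgroup $E$, remove the standard automorphism supplied by Theorem~\ref{thm:elementary-adjoint-automorphisms}, use normality to show that $y^{-1}\Psi(y)$ centralizes $E$ (hence the frame), conclude with Lemma~\ref{lemma:frame-centralizer}, and get uniqueness by restricting to $E$. Your additional check that $\rho_*$ maps $G$ onto $G$ is a harmless detail that the paper leaves implicit.
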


\begin{proof}
By characteristicity from Section~2, restrict to $E$ and remove
the standard automorphism supplied by
Theorem~\ref{theorem:automorphisms-of-E}.
The remaining automorphism $\Xi$ fixes $E$ pointwise. For
$a\in G$ and $e\in E$, normality gives
\[
 \Xi(a)e\Xi(a)^{-1}=\Xi(aea^{-1})=aea^{-1}.
\]
Thus $a^{-1}\Xi(a)$ centralizes $E$, and hence the frame.
Lemma~\ref{lemma:frame-centralizer} gives $\Xi=1$.
Uniqueness follows by restriction to $E$.
\end{proof}

\subsection{The simply connected form}
\label{subsec:simply-connected-form}

We first show that the full simply connected group is elementary,
then lift the adjoint classification through the central isogeny.
Write $\widetilde E=E_{\mathrm{sc}}$ and
$\widetilde G=G_{\mathrm{sc}}$.
The local torus--elementary factorization gives
\[
 G=T_{\mathrm{ad},\sigma}E,\qquad
 \widetilde G=T_{\mathrm{sc},\sigma}\widetilde E;
\]
see \cite[Proposition~5.8]{GMNormalSubgroups}, and
\cite{AbeLocal} for the split $F_4$ case.

The simple coroots form a basis of the simply connected
cocharacter lattice. Hence every fixed torus element has the form
\[
 \bigl(h_{\alpha_1}(a)h_{\alpha_6}(\overline a)\bigr)
 \bigl(h_{\alpha_2}(b)h_{\alpha_5}(\overline b)\bigr)
 h_{\alpha_3}(c)h_{\alpha_4}(d),
\]
where $a,b\in R^\times$ and $c,d\in R_0^\times$.
Each factor is a relative toral element and is elementary:
for an admissible unit $u$,
\[
\begin{aligned}
 h_{[\alpha]}(u)&=w_{[\alpha]}(u)w_{[\alpha]}(1)^{-1},\\
 w_{[\alpha]}(u)&=x_{[\alpha]}(u)
                   x_{-[\alpha]}(-u^{-1})x_{[\alpha]}(u).
\end{aligned}
\]
For a short class, this is the product of the two commuting
absolute Weyl words with parameters $u,\overline u$.
Therefore
\[
 T_{\mathrm{sc},\sigma}\subseteq\widetilde E,
 \qquad \widetilde G=\widetilde E.
\]
This argument uses the simple-coroot basis, not an inverse
of the $E_6$ Cartan matrix.

Let $\Lambda_r\subseteq\Lambda_{\mathrm{sc}}$ be the absolute
root and weight lattices, with
$\Lambda_{\mathrm{sc}}/\Lambda_r\simeq\mathbb Z/3\mathbb Z$.
For a $\tau$-stable lattice $\Lambda$, put
\[
 \operatorname{Hom}_1(\Lambda,R^\times)
   :=\{\chi\in\operatorname{Hom}(\Lambda,R^\times)
       \mid\chi\circ\tau=\theta\circ\chi\},
\]
where $\operatorname{Hom}$ denotes group homomorphisms.
Thus $\chi(\tau\lambda)=\overline{\chi(\lambda)}$ for every
$\lambda\in\Lambda$.

\begin{theorem}
\label{thm:simply-connected-automorphisms}
Every automorphism of $\widetilde E$ has the form
\[
 \varphi=i_x\circ d_\chi\circ\rho_*,
 \qquad x\in\widetilde E,\quad
 \chi\in\operatorname{Hom}_1(\Lambda_r,R^\times),\quad
 \rho\in\operatorname{Aut}(R,\theta),
\]
where $d_\chi$ is the diagonal automorphism associated with $\chi$.
Conversely, every such composition is an automorphism of
$\widetilde E$.
\end{theorem}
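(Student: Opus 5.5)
We reduce to the adjoint classification, Theorem~\ref{theorem:automorphisms-of-E}, through the central isogeny $\pi:\widetilde E\to E$.

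\emph{Step 1: transfer to the adjoint group.} The adjoint representation of the simply connected group factors through $\pi$. The kernel $\ker\pi$ lies in the fixed torus $T_{\mathrm{sc},\sigma}$, is central, and equals $Z(\widetilde E)$ because $E$ is centerless. Since $\pi$ is surjective, being the image of root elements onto root elements, $Z(\widetilde E)$ is characteristic. Hence every $\varphi\in\operatorname{Aut}\widetilde E$ induces an automorphism $\overline\varphi$ of $E\simeq\widetilde E/Z(\widetilde E)$. Theorem~\ref{theorem:automorphisms-of-E} then gives $\overline\varphi=i_g\circ\rho_*$ with $g\in G$ and $\rho\in\operatorname{Aut}(R,\theta)$. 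By the factorization $G=T_{\mathrm{ad},\sigma}E$, write $g=t\,\pi(x)$ with $t\in T_{\mathrm{ad},\sigma}$ and $x\in\widetilde E$. An element $t$ is a homomorphism $\chi:\Lambda_r\to R^\times$ satisfying $\chi\circ\tau=\theta\circ\chi$, that is, $\chi\in\operatorname{Hom}_1(\Lambda_r,R^\times)$. Conjugation by $t$ multiplies $x_\alpha(s)$ by $\chi(\alpha)$, so it lifts to the diagonal automorphism $d_\chi$ of $\widetilde E$ acting on root elements by the same formula. This is well defined on $\widetilde E$ because it is conjugation by a point of the simply connected torus over a faithfully flat extension, or by the standard Steinberg-type argument. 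Put $\varphi'=i_x\circ d_\chi\circ\rho_*$. Then $\varphi'$ is an automorphism of $\widetilde E$, and $\Xi=\varphi'^{-1}\varphi$ induces the identity on $E$.

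\emph{Step 2: kill the central twist.} Since $\Xi$ induces the identity modulo the center, $\Xi(y)=y\,c(y)$ for a map $c:\widetilde E\to Z(\widetilde E)$. Because $Z$ is central, $c$ is a homomorphism into an abelian group. The group $\widetilde E$ is perfect: every root element is a commutator of root elements. For long roots this follows from $B_2$ relations with $2$ invertible, and for short roots from the $A_2$ relation $[x_{\beta_0}(t),x_{e_4}(1)]=x_{\beta_0+e_4}(\pm t)$. Hence $c$ is trivial, $\Xi=1$, and $\varphi=\varphi'$. Since $\widetilde E$ is elementary, strictly inner automorphisms are exactly $i_x$ with $x\in\widetilde E$.

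\emph{Step 3: converse and the main obstacle.} The maps $i_x$ and $\rho_*$ are clearly automorphisms, and $d_\chi$ is an automorphism by the previous remark. I expect the main obstacle to be this well-definedness of $d_\chi$ on $\widetilde E$. It requires either a torus element defined over a faithfully flat extension of $R$, obtained by adjoining cube roots so that $\chi$ extends to $\Lambda_{\mathrm{sc}}$, followed by descent of the conjugation using the fact that $\widetilde E$ is defined over $R$, or the observation that $d_\chi$ preserves the defining relations. I would try first the extension of $\chi$ from $\Lambda_r$ to $\Lambda_{\mathrm{sc}}$ over $S=R[Y]/(Y^3-u)$. For this choose $u=\chi(3\varpi)$, where $3\varpi\in\Lambda_r$. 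This extension is free of rank three, hence faithfully flat, and equivariance is preserved by choosing $\varpi$ compatibly with $\tau$ and extending $\theta$. Conjugation by the resulting point of $T_{\mathrm{sc}}(S)$ sends $x_\alpha(s)$ to $x_\alpha(\chi(\alpha)s)$, which is defined over $R$. It therefore preserves $\widetilde E(R)\subseteq\widetilde E(S)$ and gives $d_\chi$. The inverse is $d_{\chi^{-1}}$.
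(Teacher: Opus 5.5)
Your proposal is correct and follows essentially the paper's proof: pass to $E=\widetilde E/Z(\widetilde E)$, apply the adjoint theorem, split $g$ through the fixed torus, realize $d_\chi$ by conjugation with a point of $T_{\mathrm{sc}}$ over the free rank-three extension $R[\Lambda_{\mathrm{sc}}]\otimes_{R[\Lambda_r],\chi}R\cong R[Y]/(Y^3-\chi(3\varpi))$, and kill the central twist by perfectness. There are two small fixes. First, with $\varphi'=i_x\circ d_\chi\circ\rho_*$ you should factor $g=\pi(x)t$ rather than $t\pi(x)$, which is possible because the torus normalizes $E$. Second, $\tau$ inverts $\Lambda_{\mathrm{sc}}/\Lambda_r$, so no $\tau$-compatible generator $\varpi$ exists and $\theta$ must be extended by $\overline Y=\chi(\varpi+\tau\varpi)Y^{-1}$; equivariance of the torus point is not actually needed, however, since the conjugation formula on root elements involves only $\chi|_{\Lambda_r}$.
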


\begin{proof}
\smallskip
\noindent\emph{1. Pass to the adjoint quotient.}
The central isogeny gives a surjection
$\delta:\widetilde E\to E$, since it preserves the parameters
of every relative root generator.
Its kernel is central, and centerlessness of $E$ gives
\[
 \ker\delta=Z(\widetilde E).
\]
Thus $\varphi$ induces an automorphism $i_g\circ\rho_*$ of $E$
by Theorem~\ref{theorem:automorphisms-of-E}.
The fixed torus normalizes $E$, so write
\[
 g=Xh(\chi),\qquad X\in E,\quad
 \chi\in\operatorname{Hom}_1(\Lambda_r,R^\times),
\]
and choose $x\in\widetilde E$ with $\delta(x)=X$.

\smallskip
\noindent\emph{2. Realize the diagonal factor.}
Use the group-algebra construction of
\cite[Section~9]{BuninaMakadiya2A}:
\[
\begin{gathered}
 \varepsilon_\chi:R[\Lambda_r]\longrightarrow R,\qquad
 \varepsilon_\chi(e^\alpha)=\chi(\alpha),\\
 S_\chi=R[\Lambda_{\mathrm{sc}}]
       \otimes_{R[\Lambda_r],\,\varepsilon_\chi}R.
\end{gathered}
\]
Coset representatives for $\Lambda_{\mathrm{sc}}/\Lambda_r$
containing $0$ give an $R$-basis of $S_\chi$.
Thus $S_\chi$ is free of rank three and $R\to S_\chi$ is injective.
The semilinear lattice involution descends to $S_\chi$, and
\[
 \chi'(\lambda)=e^\lambda\otimes1
 \qquad(\lambda\in\Lambda_{\mathrm{sc}})
\]
is an equivariant character extending $\chi$.
The element $h'=h(\chi')\in T_{\mathrm{sc},\sigma}(E_6,S_\chi)$
satisfies
\[
 h'x_{[\alpha]}(t)(h')^{-1}
   =x_{[\alpha]}(\chi(\alpha)t).
\]
The new parameter is admissible over $R$: for a long root,
$\chi(\alpha)\in R_0^\times$, and for a short root,
$\chi(\tau\alpha)=\overline{\chi(\alpha)}$.
The same holds for conjugation by $(h')^{-1}$.
Through the faithful base-change embedding, these conjugations
therefore define an automorphism $d_\chi$ of $\widetilde E$,
whose action depends only on $\chi$.
Neither \'etaleness of $S_\chi/R$ nor invertibility of $3$ is needed.

\smallskip
\noindent\emph{3. Eliminate the central correction.}
The automorphism
$\varphi^{\mathrm{st}}=i_x\circ d_\chi\circ\rho_*$
induces the same automorphism of $E$ as $\varphi$.
Consequently,
\[
 (\varphi^{\mathrm{st}})^{-1}\varphi(y)=yz(y),
 \qquad z(y)\in Z(\widetilde E),
\]
where centrality makes $z:\widetilde E\to Z(\widetilde E)$ a
group homomorphism.

The group $\widetilde E$ is perfect: long and short $A_2$
subsystems express root elements of the corresponding length as
commutators with parameters $\pm t,1$ and constants $\pm1$.
Here $t\in R_0$ for long roots and $t\in R$ for short roots.
Weyl conjugation reaches every root, so every elementary generator
is a commutator.
Thus $z=1$ and $\varphi=\varphi^{\mathrm{st}}$.
Strictly inner and ring automorphisms, together with the construction
of $d_\chi$ above, also prove the converse.
\end{proof}

\begin{corollary}
\label{cor:full-simply-connected-automorphisms}
Every automorphism of $\widetilde G$ is a product of strictly
inner, diagonal, and ring automorphisms.
\end{corollary}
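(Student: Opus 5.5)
The plan is to reduce the corollary directly to Theorem~\ref{thm:simply-connected-automorphisms}, using the equality $\widetilde G=\widetilde E$ proved at the start of Subsection~\ref{subsec:simply-connected-form}. That equality came from two facts. The local torus--elementary factorization gives $\widetilde G=T_{\mathrm{sc},\sigma}\widetilde E$. Every fixed simply connected torus element is a product of relative toral elements $h_{[\alpha]}(u)=w_{[\alpha]}(u)w_{[\alpha]}(1)^{-1}$, each of which lies in $\widetilde E$. Hence an automorphism of $\widetilde G$ is literally an automorphism of $\widetilde E$, and no characteristicity or restriction argument is needed, unlike in Corollary~\ref{cor:full-adjoint-automorphisms}.

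Given $\varphi\in\operatorname{Aut}\widetilde G=\operatorname{Aut}\widetilde E$, Theorem~\ref{thm:simply-connected-automorphisms} writes
\[
 \varphi=i_x\circ d_\chi\circ\rho_*
\]
with $x\in\widetilde E=\widetilde G$, $\chi\in\operatorname{Hom}_1(\Lambda_r,R^\times)$ and $\rho\in\operatorname{Aut}(R,\theta)$. I will then check that each factor has the intended meaning on $\widetilde G$.

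\begin{enumerate}
\item $i_x$ is conjugation by an element of the group itself, hence strictly inner.
\item $d_\chi$ is the diagonal automorphism in the sense of \cite{BuninaMakadiya2A}. It was constructed as conjugation by $h(\chi')$ over the free extension $S_\chi$, and it preserves $\widetilde E=\widetilde G$.
\item $\rho_*$ is the coefficientwise ring automorphism.
\end{enumerate}

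Conversely, each such composition is an automorphism of $\widetilde E$ by the converse part of the theorem, and therefore of $\widetilde G$.

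The only point needing care is that ``diagonal automorphism of $\widetilde G$'' in the sense of the series agrees with $d_\chi$ as constructed. Since $\widetilde G=\widetilde E$, both are determined by their action on the generating root elements, $x_{[\alpha]}(t)\mapsto x_{[\alpha]}(\chi(\alpha)t)$, so they coincide. I expect no genuine obstacle here: the substantive work is already contained in Theorem~\ref{thm:simply-connected-automorphisms} and in the equality $\widetilde G=\widetilde E$.
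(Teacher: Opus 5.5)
Your proposal is correct and matches the paper's proof, which consists only of combining the equality $\widetilde G=\widetilde E$ with Theorem~\ref{thm:simply-connected-automorphisms}. Your additional checks that $i_x$, $d_\chi$ and $\rho_*$ are respectively strictly inner, diagonal and ring automorphisms of $\widetilde G$ are consistent with the paper, though they add nothing beyond what that theorem already states.
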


\begin{proof}
Use $\widetilde G=\widetilde E$ and
Theorem~\ref{thm:simply-connected-automorphisms}.
\end{proof}

This completes Theorem~A for both isogeny types.
In the adjoint case, uniqueness gives
\[
 \operatorname{Aut}E\cong\operatorname{Aut}G
   \cong G\rtimes\operatorname{Aut}(R,\theta),
\]
where $\rho$ acts on $G$ coefficientwise and $(g,\rho)$ corresponds
to $i_g\circ\rho_*$.

\subsection{Relative normalization with a local target}
\label{subsec:relative-local-normalization}

The same normalization applies to homomorphisms with a possibly
nonlocal source and a local target, provided that their residual
action is standard as specified below.

\begin{proposition}
\label{prop:relative-local-normalization}
Let $S$ be a commutative ring with involution and $T$ a local
commutative ring with involution, with $2$ invertible in both.
Put
\[
 \mathfrak n=\operatorname{Jac}(T),\qquad
 q:T\longrightarrow\overline T:=T/\mathfrak n,
\]
and give $\overline T$ the induced involution.
Suppose that a homomorphism $\Psi:E(S)\to E(T)$ satisfies
\[
 q_*\circ\Psi=i_h\circ(\nu_0)_*,
 \qquad h\in G(\overline T),
\]
for a unital equivariant ring homomorphism $\nu_0:S\to\overline T$.
Then there exist a unique element $g\in G(T)$ and a unique
unital equivariant ring homomorphism $\rho:S\to T$ such that
\[
 q_*(g)=h,\qquad q\circ\rho=\nu_0,\qquad
 \Psi=i_g\circ\rho_*.
\]
The element $g$ is already uniquely determined by
\[
 g\,x_{\lambda,T}(1)\,g^{-1}
   =\Psi(x_{\lambda,S}(1))
 \qquad(\lambda\in\Phi_\rho),
\]
where the second subscript specifies the coefficient ring.
Neither $\Psi$ nor $\nu_0$ is assumed injective or surjective.
\end{proposition}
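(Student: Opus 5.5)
The proof repeats the three-step scheme of Sections~2--6 with $T$ as the local target; the residual step is replaced by the hypothesis. First lift $h$ to some $\widetilde h\in G(T)$ by Lemma~\ref{lemma:surjective-reduction-G}. Its proof used only the Bruhat decomposition over the residue field and lifting of torus and unipotent coordinates, so it applies to any local $T$ with $2$ invertible. Put $\Psi_1=i_{\widetilde h^{-1}}\circ\Psi$. Then
\[
 q_*\circ\Psi_1=(\nu_0)_*.
\]
Since $\nu_0$ is unital, it fixes the prime subring, and therefore $\Psi_1(a)\equiv a\pmod{M_{78}(\mathfrak n)}$ for every word $a$ in the standard frame $\mathcal F(S)$. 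This is exactly the input of Sections~3--4.

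\emph{Linear normalization and correction.} The remark at the end of Section~4 says the constructions of Sections~3--4 use only group relations and residual standardness, not bijectivity. So Proposition~\ref{prop:weyl-normalization}, Lemma~\ref{lemma:full-short-root-normalization} and Proposition~\ref{prop:root-unipotent-normalization} apply verbatim over $T$, with $R$ replaced by $T$ and $J$ by $\mathfrak n$. Their composite gives $C_0\in\operatorname{GL}_{78}(T)^\sigma$ with $C_0\equiv I_{78}$ and
\[
 C_0\Psi_1(x_\lambda(1))C_0^{-1}=x_\lambda(1).
\]
The matrices $a_\lambda=\Psi_1(x_\lambda(1))$ lie in $E(T)$, so they preserve the bracket. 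Proposition~\ref{prop:descent-of-correction} then yields a unique $C\in G(T)$ with $C\equiv I_{78}$ and $Ca_\lambda C^{-1}=x_\lambda(1)$. Set $g=\widetilde hC^{-1}$. Then $q_*(g)=h$, and $\Psi_0:=i_{g^{-1}}\circ\Psi$ is a homomorphism $E(S)\to G(T)$ fixing the frame.

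\emph{Coefficients.} Proposition~\ref{prop:fixed-frame-homomorphisms}, which allows an arbitrary source and target, gives a unique unital equivariant $\rho:S\to T$ with $\Psi_0=\rho_*$. Hence $\Psi=i_g\circ\rho_*$.

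To see $q\circ\rho=\nu_0$, reduce this identity modulo $\mathfrak n$:
\[
 i_h\circ(\nu_0)_*=i_h\circ(q\rho)_*.
\]
Both coefficient maps fix the frame. Evaluating on a short-root subgroup and using faithful parametrization then gives $q\rho=\nu_0$.

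\emph{Uniqueness.} Suppose $g'\in G(T)$ satisfies the displayed frame conjugacies. Then $g^{-1}g'$ centralizes $\mathcal F_T$. Lemma~\ref{lemma:frame-centralizer} holds over an arbitrary coefficient ring, so $g'=g$. Any decomposition $\Psi=i_{g'}\circ\rho'_*$ gives such conjugacies, because $\rho'_*$ fixes the frame. Hence $g'=g$, and $\rho'=\rho$ follows by evaluating on a short-root subgroup.

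The main obstacle is the first step, specifically the surjectivity of $G(T)\to G(\overline T)$. The residual quotient $\overline T$ is a field, so the Bruhat argument still applies. One must check, however, that fixed torus coordinates lift by averaging to units. This holds because $(a+\overline a)/2$ reduces to a nonzero residue. The sole remaining care is that the normalization procedures never used injectivity of $\Psi$; Section~4 explicitly records this.
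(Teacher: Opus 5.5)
Your proposal is correct and follows the paper's proof step for step: lift $h$ via Lemma~\ref{lemma:surjective-reduction-G}, run the homomorphism version of the Sections~3--4 normalization, replace $C_0$ by $C\in G(T)$ using Proposition~\ref{prop:descent-of-correction}, apply Proposition~\ref{prop:fixed-frame-homomorphisms} to $i_{g^{-1}}\circ\Psi$, and deduce uniqueness from Lemma~\ref{lemma:frame-centralizer}; in the normalization step you use implicitly what the paper states explicitly, namely that $N_\rho$ has a common integral model over $S$ and $T$. Your derivation of $q\circ\rho=\nu_0$ by reducing $\Psi=i_g\circ\rho_*$ modulo $\mathfrak n$ is the same computation as the paper's use of $q_*\circ\Psi^0=(\nu_0)_*$.
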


\begin{proof}
By Lemma~\ref{lemma:surjective-reduction-G}, lift $h$ to
$\widetilde h\in G(T)$ and put
$\Psi_1=i_{\widetilde h^{-1}}\circ\Psi$.
Then $q_*\circ\Psi_1=(\nu_0)_*$, so all frame images have standard
reductions.

The common integral Weyl model from
Subsection~\ref{subsec:initial-weyl-family} and the homomorphism
versions of the normalizations in Sections~3--4 give
\[
\begin{gathered}
 C_0\in\operatorname{GL}_{78}(T)^\sigma,\qquad
 C_0\equiv I_{78}\pmod{M_{78}(\mathfrak n)},\\
 C_0\Psi_1(x_{\lambda,S}(1))C_0^{-1}=x_{\lambda,T}(1)
 \qquad(\lambda\in\Phi_\rho).
\end{gathered}
\]
The matrices $\Psi_1(x_{\lambda,S}(1))$ belong to $E(T)$ and
preserve the bracket. Proposition~\ref{prop:descent-of-correction}
therefore replaces $C_0$ by $C\in G(T)$ with the same congruence
and frame conjugacies.

Set $g=\widetilde hC^{-1}$.
Then $q_*(g)=h$, and
$\Psi^0:=i_{g^{-1}}\circ\Psi=i_C\circ\Psi_1$ fixes the frame.
Proposition~\ref{prop:fixed-frame-homomorphisms} gives
$\Psi^0=\rho_*$ for a unique unital equivariant ring homomorphism
$\rho:S\to T$.
Since $q_*\circ\Psi^0=(\nu_0)_*$, evaluation on a short-root
subgroup gives $q\circ\rho=\nu_0$.

Any other factorization has the same frame conjugacies, because
unital coefficient homomorphisms fix the frame.
Lemma~\ref{lemma:frame-centralizer} makes its conjugating element
equal to $g$; faithful short-root parametrization then makes its
coefficient homomorphism equal to $\rho$.
\end{proof}

For every unital equivariant ring homomorphism $f:T\to U$,
\[
 f_*\circ\Psi=i_{f_*(g)}\circ(f\circ\rho)_*.
\]
By Lemma~\ref{lemma:frame-centralizer}, $f_*(g)$ remains the unique
element of $G(U)$ giving these frame conjugacies, without a
locality assumption on $U$.

\section*{Computational verification}

The archive \texttt{twisted\_e6\_verification.zip} contains the
source code, exact systems, model matrices, selected rows and
columns, and certificate minors for the computational parts
of the proof. The accompanying \texttt{verification\_report.pdf}
summarizes the results; the machine-readable summary is included
in the archive as \texttt{output/verification\_results.json}.
The calculations are organized as follows.
\begin{center}
\begin{tabular}{@{}p{.48\linewidth}p{.48\linewidth}@{}}
\hline
Calculation & Directory under \texttt{sections/} \\ \hline
Complete short-root normalization
(Lemma~\ref{lemma:full-short-root-normalization})
  & \texttt{section4\_complete\_short\_root/} \\
Invariant alternating tensors
(Lemma~\ref{lemma:invariant-alternating-tensors})
  & \texttt{section5\_invariant\_tensors/} \\
Long- and skew-root recognition
(Lemma~\ref{lemma:long-root-rigidity})
  & \texttt{section6\_parameter\_rigidity/} \\ \hline
\end{tabular}
\end{center}

After installing the requirements listed in the archive, run
\begin{center}
\texttt{python run\_all.py}
\end{center}
from its root directory. This checks the file hashes and stored
certificates, regenerates all systems in a temporary copy, and
compares the regenerated mathematical data with the supplied files.
The published files are not overwritten.
All proof calculations use exact arithmetic over $\mathbb Z[1/2]$.
The unit-minor and model-vector arguments in the corresponding
proofs justify their use over the stated coefficient rings.
In particular, the long-root relation $3u=0$ is retained until
bracket preservation gives $u=0$; no division by $3$ is used.

Lemma~\ref{lemma:bracket-stabilizer-tangent} has a direct proof
in the text. The additional computation in
\texttt{sections/section5\_bracket\_stabilizer/}
is a consistency check, not a prerequisite for that proof.
The archive also checks the Chevalley sign conventions, opposite-root
normalizations, Jacobi identities, and standard Weyl words.


\medskip

\section*{Declaration on the use of generative AI}

The authors used ChatGPT (OpenAI) to assist with organizing and
revising the exposition, exploring proof simplifications, checking
intermediate calculations, and developing verification code.
The authors take full responsibility for the mathematical content,
computational certificates, and final manuscript.



\begin{thebibliography}{99}

\bibitem{AbeLocal}
E.~Abe,
\emph{Chevalley groups over local rings},
Tohoku Math. J. (2) \textbf{21} (1969), 474--494.

\bibitem{AbeTwisted}
E.~Abe,
\emph{Coverings of twisted Chevalley groups over commutative rings},
Sci. Rep. Tokyo Kyoiku Daigaku Sect.~A \textbf{13} (1977), 194--218.

\bibitem{Borel}
A.~Borel,
\emph{Linear algebraic groups},
2nd ed., Graduate Texts in Mathematics, vol.~126,
Springer-Verlag, New York, 1991.

\bibitem{BuninaLocalADE}
E.~I.~Bunina,
\emph{Automorphisms of elementary adjoint Chevalley groups of types
$A_l,D_l,E_l$ over local rings with $1/2$},
Algebra and Logic \textbf{48} (2009), no.~4, 250--267.

\bibitem{Bunina2012}
E.~I.~Bunina,
\emph{Automorphisms of Chevalley groups of different types over
commutative rings},
J. Algebra \textbf{355} (2012), 154--170.

\bibitem{Bunina2024}
E.~I.~Bunina,
\emph{Automorphisms of Chevalley groups over commutative rings},
Comm. Algebra \textbf{52} (2024), no.~6, 2313--2327.

\bibitem{BuninaMakadiya2A}
E.~I.~Bunina and D.~H.~Makadiya,
\emph{Automorphisms of twisted Chevalley groups of type
${}^{2}A_\ell$ $(\ell\geq5)$ over local rings},
arXiv:2607.22284 (2026).

\bibitem{BuninaMakadiya2A34}
E.~I.~Bunina and D.~H.~Makadiya,
\emph{Automorphisms of twisted Chevalley groups of type
${}^{2}A_3$ and ${}^{2}A_4$ over local rings},
arXiv:2609.02617 (2026).

\bibitem{BuninaMakadiya2D}
E.~I.~Bunina and D.~H.~Makadiya,
\emph{Automorphisms of twisted Chevalley groups of type
${}^{2}D_\ell$ $(\ell\geq4)$ over local rings},
arXiv:2608.02312 (2026).

\bibitem{Chevalley}
C.~Chevalley,
\emph{Certains sch\'emas de groupes semi-simples},
S\'eminaire Bourbaki \textbf{6} (1960/61),
Exp. No.~219, 219--234.

\bibitem{GMNormalSubgroups}
S.~M.~Garge and D.~H.~Makadiya,
\emph{On normal subgroups of twisted Chevalley groups over
commutative rings},
with an appendix by P.~Gvozdevsky,
J. Algebra \textbf{688} (2026), 587--659.

\bibitem{Humphreys1969}
J.~E.~Humphreys,
\emph{On the automorphisms of infinite Chevalley groups},
Canad. J. Math. \textbf{21} (1969), 908--911.

\bibitem{HumphreysLie}
J.~E.~Humphreys,
\emph{Introduction to Lie algebras and representation theory},
Graduate Texts in Mathematics, vol.~9,
Springer-Verlag, New York--Berlin, 1972.

\bibitem{StacksProject}
The Stacks Project Authors,
\emph{The Stacks Project},
2026,
\url{https://stacks.math.columbia.edu}.

\bibitem{SteinbergLectures}
R.~Steinberg,
\emph{Lectures on Chevalley groups},
University Lecture Series, vol.~66,
American Mathematical Society, Providence, RI, 2016.

\bibitem{SuzukiLocalTwisted}
K.~Suzuki,
\emph{On normal subgroups of twisted Chevalley groups over local rings},
Sci. Rep. Tokyo Kyoiku Daigaku Sect.~A \textbf{13} (1977), 238--249.

\bibitem{SuzukiNormality}
K.~Suzuki,
\emph{Normality of the elementary subgroups of twisted Chevalley
groups over commutative rings},
J. Algebra \textbf{175} (1995), 526--536.

\bibitem{TitsWeyl}
J.~Tits,
\emph{Normalisateurs de tores. I. Groupes de Coxeter \'etendus},
J. Algebra \textbf{4} (1966), 96--116.

\bibitem{PlotkinVavilov}
N.~A.~Vavilov and E.~B.~Plotkin,
\emph{Chevalley groups over commutative rings. I. Elementary
calculations},
Acta Appl. Math. \textbf{45} (1996), 73--113.

\end{thebibliography}
\end{document}